\documentclass[10pt,a4paper]{article}

\usepackage{comment}
\usepackage[english]{babel}
\usepackage[T1]{fontenc}
\usepackage{amsmath}
\usepackage{amsthm}
\usepackage{amsfonts}
\usepackage{amssymb}
\usepackage{adjustbox}
\usepackage{graphicx}
\usepackage{tikz}
\usetikzlibrary{backgrounds}
\usetikzlibrary{patterns,snakes}
\usepackage{mathtools}
\usepackage{enumitem}
\usepackage{dirtytalk}
\usepackage{wasysym}
\usepackage{fancybox}
\usepackage{float}
\usepackage{xfrac}
\usepackage{caption}
\usepackage{refcount}
\usepackage{centernot}

\usepackage{hyperref}





\definecolor{lightgra}{rgb}{0.8, 0.8, 1.0}
\colorlet{lightgray}{lightgra!75}


\theoremstyle{plain}
\newtheorem{theorem}{Theorem}
\newtheorem{proposition}[theorem]{Proposition}
\newtheorem{definition}[theorem]{Definition}
\newtheorem{notation}[theorem]{Notation}
\newtheorem{lemma}[theorem]{Lemma}
\newtheorem{corollary}[theorem]{Corollary}
\newtheorem{question}{Question}

\newtheorem{claim}[theorem]{Claim}
\newtheorem{theorembis}{Theorem}

\theoremstyle{remark}

\newenvironment{claimproof}[1]{\par\addvspace{\baselineskip}\par\noindent\textit{Proof of the claim.}\space#1}{\hfill {$\square\ $Claim} \par\addvspace{\baselineskip}}


\newcommand{\bsigma}{\mathbf{\Sigma}}
\newcommand{\bpi}{\mathbf{\Pi}}
\newcommand{\bdelta}{\mathbf{\Delta}}
\newcommand{\bb}{\mathbf{\mathcal{B}}}
\newcommand{\A}{\mathcal{A}}

\newcommand{\PP}{\mathcal{P}}
\newcommand{\Pw}{\mathcal{P}\omega}
\newcommand{\scott}{\mathcal{P}\omega}
\newcommand{\infinite}{\mathcal{P}_{\w}(\omega)}
\newcommand{\finite}{\mathcal{P}_{<\w}(\omega)}
\newcommand{\B}{\mathcal{B}}
\newcommand{\w}{\omega}
\newcommand{\C}{\mathcal{C}}
\newcommand{\p}{\mathsf{P}}
\newcommand{\q}{\mathsf{Q}}
\newcommand{\F}{\mathcal{F}}
\newcommand{\N}{\mathbb{N}}

\newcommand{\OO}{\mathcal{O}}
\newcommand{\injectivehomo}{\xrightarrow[]{\text{\hbox{\resizebox{0.5cm}{!}{\textit{1-1 h.}}}}}}
\newcommand{\notinjectivehomo}{\centernot{\xrightarrow[]{\text{\hbox{\resizebox{0.5cm}{!}{\textit{1-1 h.}}}}}}}


\DeclareMathOperator{\Card}{Card}
\DeclareMathOperator{\Pred}{Pred}
\DeclareMathOperator{\Succ}{Succ}
\DeclareMathOperator{\rk}{rk}
\DeclareMathOperator{\lh}{lh}

\DeclareMathOperator{\reg}{emb}
\DeclareMathOperator{\lay}{emb}
\DeclareMathOperator{\Str}{Str}

\DeclareMathOperator{\incr}{incr}
\DeclareMathOperator{\imm}{im}
\DeclareMathOperator{\decr}{decr}

\DeclareMathOperator{\fin}{fin}
\DeclareMathOperator{\even}{even}
\DeclareMathOperator{\odd}{odd}
\DeclareMathOperator{\I}{I}
\DeclareMathOperator{\II}{II}

\DeclareMathOperator{\col}{c}

\DeclareMathOperator{\shrub}{shr}


\usepackage[auth-sc]{authblk}

\title{\textbf{The Wadge order on the Scott domain is not a well-quasi-order}}
\author[1]{Jacques Duparc} 
\author[1,2]{Louis Vuilleumier\footnote{The second author gratefully acknowledges support from the Swiss National Science Foundation grant 200021-159241.}} 
\affil[1]{Department of Information Systems (DESI),\authorcr University of Lausanne, 
Switzerland \authorcr \texttt{Jacques.Duparc@unil.ch}, \texttt{Louis.Vuilleumier.1@unil.ch} \authorcr ~} 
\affil[2]{Research Institute on the Foundations of Computer Science (IRIF), \authorcr Paris Diderot University, Sorbonne Paris Cité, 
France \authorcr \texttt{Louis.Vuilleumier@etu.univ-paris-diderot.fr}}
\date{}  


\begin{document}

\maketitle

\begin{abstract}
We prove that the Wadge order on the Borel subsets of the Scott domain is not a well-quasi-order, and that this feature even occurs among the sets of Borel rank at most 2. For this purpose, a specific class of countable 2-colored posets $\mathbb{P}_{\lay}$ equipped with the order induced by homomorphisms is embedded into the Wadge order on the $\bdelta^0_2$-degrees of the Scott domain. We then show that $\mathbb{P}_{\lay}$ both admits infinite strictly decreasing chains and infinite antichains with respect to this notion of comparison, which therefore transfers to the Wadge order on the $\bdelta^0_2$-degrees of the Scott domain. \\
\emph{Keywords}: quasi-Polish space, Scott domain, Wadge reducibility, well-quasi-order. \\
\emph{Mathematics Subject Classification}: 03B70, 03D30, 03E15, 54H05, 68Q15.
\end{abstract}

\vspace{0.5cm}


\normalsize

With the exception of Section \ref{sectionduparc}, all the results presented in this article -- including the main ones -- are due to the sole second author.


\section{Introduction}

The \emph{Wadge order} $\leq_w$ -- named after Wadge \cite{Wadge1984} -- on the subsets of a topological space $X$ is the quasi-order induced by reductions via continuous functions. More precisely, if $A,B\subseteq X,$ then $A\leq_w B$ if there exists a continuous function $f:X\to X$ such that $f^{-1}[B]=A,$ i.e., $x\in A\Leftrightarrow f(x)\in B$ for all $x\in X.$ The Wadge order measures the topological complexity of the subsets of $X$. Indeed, $A\leq_w B$ means that the membership problem for $A$ can be reduced, via some continuous function, to the membership problem for $B;$ or, in other words, $A$ is topologically less complicated than $B.$

The Wadge order is a refinement of both the classical Borel and Hausdorff-Kuratowski difference hierarchies since when $B$ is located strictly higher than $A$ in one of these hierarchies, then $A\leq_w B$ holds. Over the last 50 years, this quasi-order has been extensively studied in the context of \emph{Polish spaces} -- i.e., the separable completely metrizable spaces \cite{Andretta2012,Andretta2007,Duparc2001,Ikegami2010bis,Kechris1995,Kihara2017,Louveau2012,Louveau2012bis,Schlicht2018,Vanwesep2012,Wadge1984,Wadge2012}.

Over the last decades, some slightly different classes of topological spaces rose interest for their involvement in computer science \cite{Gierz2003,Goubaultlarrecq2013, Scott1976, Selivanov2005bis, Selivanov2006, Weihrauch2000}. This has been the case, in particular, of non-metrizable -- hence non-Polish -- spaces occurring as domains of the semantic of programming languages. Building on a prior work of Selivanov -- that extensively studied a generalized version of the Borel hierarchy to non-metrizable spaces \cite{Selivanov2005bis,Selivanov2006} -- de Brecht introduced in \cite{Debrecht2012} the class of \emph{quasi-Polish spaces} -- i.e., the second countable quasi-metrizable spaces, where a quasi-metric is a metric whose symmetry condition has been dropped. In particular, de Brecht proved that some of the major results of descriptive set theory extend to quasi-Polish spaces (see Theorems 19, 23, 58 and 70 in \cite{Debrecht2012}). He also exhibited the \emph{Scott domain}\footnote{The Scott domain was first introduced by Scott as a denotational semantic for the $\lambda$-calculus \cite{Scott1976}.} $\scott$ as a universal quasi-Polish space. More precisely, de Brecht proved that the quasi-Polish spaces are -- up to homeomorphism -- exactly the $\bpi^0_2$-subsets of $\scott$ (Theorem 24 in \cite{Debrecht2012}), where $\scott$ is the power set of the integers equipped with the topology where a basic open set is composed of all the sets that contain a fixed finite subset of the integers.

More results by de Brecht suggest that a reasonable {descriptive set theory} still holds in the quasi-Polish setting. Unfortunately, very few is known about the Wadge order in this context. To the contrary, the Polish spaces $X$ whose Wadge order on the Borel subsets is well-founded and contains no infinite antichain -- or in other words, $\leq_w$ is a well-quasi-order on the Borel subsets of $X$ -- were recently characterized in \cite{Schlicht2018} as the \emph{zero-dimensional} ones -- i.e., Polish spaces admitting a clopen basis. Whether this result generalizes to quasi-Polish spaces remains open. In a first attempt to tackle this question, we propose to study the Wadge order on the subsets of the Scott domain $\scott.$

Several results have already been obtained by Selivanov who proved the existence of $\leq_w$-antichains of size 4 for $\scott$, as well as the existence of $\leq_w$-minimal sets at each level of the difference hierarchy of open sets \cite{Selivanov2005bis}; and by Becher and Grigorieff who exhibited, for each infinite level $\alpha$ of the difference hierarchy of open sets, some strictly $\leq_w$-increasing chains of sets of length $\alpha$, and also described the $\leq_w$-maximal sets for each such level for a large number of quasi-Polish spaces including $\scott$ \cite{Becher2015bis}. In this article, we show both that the Wadge order on the subsets of $\scott$ is ill-founded and that it admits infinite antichains. Moreover, we show that these properties occur already within the differences of $\omega$ open sets, i.e., at a very low level of topological complexity:

\setcounter{theorembis}{\getrefnumber{illfounded}-1}

\begin{theorembis}
$\big(D_\omega(\bsigma^0_1)(\Pw),\leq_w\!\!\big)$ is ill-founded.
\end{theorembis}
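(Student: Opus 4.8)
The plan is to decouple the statement into a general embedding result and an explicit combinatorial witness. First I would establish, as the technical core of the paper, the embedding announced in the abstract: a map $\Phi$ assigning to every $\mathbb{P}\in\mathbb{P}_{\lay}$ a set $\Phi(\mathbb{P})\in D_\w(\bsigma^0_1)(\Pw)$ such that $\mathbb{P}\injectivehomo\mathbb{Q}$ (there is an injective colour- and order-preserving homomorphism $\mathbb{P}\to\mathbb{Q}$) \emph{if and only if} $\Phi(\mathbb{P})\leq_w\Phi(\mathbb{Q})$. The intended definition of $\Phi$ is the usual unravelling used in Wadge-style arguments: one reads a point $x\in\Pw$, i.e.\ a subset of $\w$, through the increasing enumeration of its elements, decodes it into a (possibly infinite) run that climbs through $\mathbb{P}$ compatibly with the poset order, and declares $x\in\Phi(\mathbb{P})$ according to the colour stabilised along that run; a continuous reduction $\Phi(\mathbb{P})\leq_w\Phi(\mathbb{Q})$ then unwinds into an injective homomorphism $\mathbb{P}\to\mathbb{Q}$, and conversely any such homomorphism manufactures a reduction. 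Since $D_\w(\bsigma^0_1)(\Pw)\subseteq\bdelta^0_2(\Pw)$, this simultaneously yields the embedding into the $\bdelta^0_2$-degrees.

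Granting that $\Phi$ is a quasi-order embedding, it suffices to exhibit in $\mathbb{P}_{\lay}$ an infinite sequence $\mathbb{P}_0,\mathbb{P}_1,\mathbb{P}_2,\dots$ that is strictly decreasing for the $1$-$1$ homomorphism relation, i.e.\ $\mathbb{P}_{n+1}\injectivehomo\mathbb{P}_n$ and $\mathbb{P}_n\notinjectivehomo\mathbb{P}_{n+1}$ for every $n$: applying $\Phi$ then gives $\Phi(\mathbb{P}_{n+1})<_w\Phi(\mathbb{P}_n)$ for all $n$ (the forward direction of the equivalence for $\leq_w$, its contrapositive for $\not\leq_w$), hence an infinite strictly $\leq_w$-descending chain inside $D_\w(\bsigma^0_1)(\Pw)$, which is exactly ill-foundedness. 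For the construction I would take each $\mathbb{P}_n$ to be assembled from a fixed $\w$-indexed family of finite coloured gadgets together with one distinguished finite $2$-coloured configuration whose ``alternation depth'' is governed by $n$ (a coloured fence/zigzag is the natural candidate), glued so that: (i) removing the outermost layer of $\mathbb{P}_n$ exposes a homomorphic copy of $\mathbb{P}_{n+1}$, giving $\mathbb{P}_{n+1}\injectivehomo\mathbb{P}_n$ for free; and (ii) any injective homomorphism out of $\mathbb{P}_n$ must send its distinguished configuration onto a sub-configuration of the target realising at least the same alternation depth.

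The main obstacle is point (ii): I need a numerical invariant of the posets in $\mathbb{P}_{\lay}$ -- roughly, the maximal length of an alternating chain following a prescribed colour pattern and emanating from a minimal element -- that is (a) provably non-increasing under arbitrary injective homomorphisms and (b) strictly larger for $\mathbb{P}_n$ than for $\mathbb{P}_{n+1}$. The colours carried by the members of $\mathbb{P}_{\lay}$ are present precisely to pin such an invariant down, since uncoloured poset homomorphisms are far too flexible to separate the $\mathbb{P}_n$; isolating the right invariant and verifying that it survives along every homomorphism is the delicate step. I would finish by checking that each $\Phi(\mathbb{P}_n)$ genuinely lands in $D_\w(\bsigma^0_1)(\Pw)$ rather than merely in $\bdelta^0_2(\Pw)$ -- this uses the bound on the number of colours and on the height of the gadgets permitted in $\mathbb{P}_{\lay}$, and it is exactly what makes the ill-foundedness already visible at Borel rank $2$.
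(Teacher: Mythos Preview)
Your overall architecture matches the paper: reduce the question to the quasi-order on $\mathbb{P}_{\lay}$ via an embedding, then exhibit a strictly decreasing chain of coloured posets. Two points, however, diverge from the paper in ways that matter.

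\textbf{The relation is not injective homomorphism.} The paper's embedding (Lemma~\ref{lemmainjectivehomomorphism}) is stated for the relation $\preccurlyeq_c$ given by the existence of an \emph{arbitrary} colour-preserving homomorphism, not $\injectivehomo_c$. In the harder direction (Claim~\ref{claim25}) a continuous reduction $\A_\p\leq_w\A_\q$ is unwound into a map $\varphi:P\to Q$ that is order- and colour-preserving but in general \emph{not} injective: it sends $p$ to (a neighbour of) the supremum $t_p$ of $\{q:l_q(q)\subseteq f(l_p(p))\}$, and distinct $p$'s can easily collide. So the equivalence you assert, with $\injectivehomo$ on the poset side, is not what the paper proves, and you would need a genuinely different construction of $\Phi$ to obtain it. Also, the inclusion $\Phi[\mathbb{P}_{\lay}]\subseteq D_\omega(\bsigma^0_1)(\Pw)$ does not hold in general; the paper only gets $\bdelta^0_2$ for arbitrary $\p\in\mathbb{P}_{\lay}$ and needs the extra finite-branching hypothesis $\p\in\mathbb{P}_{\fin}$ (Proposition~\ref{orderembedding}) to land in $D_\omega(\bsigma^0_1)$.

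\textbf{A single global ``alternation depth'' invariant cannot work.} Your plan is to distinguish $\mathbb{P}_n$ from $\mathbb{P}_{n+1}$ by a numerical invariant (maximal alternating chain from a minimal element) that strictly decreases with $n$. But any poset whose alternation depth from $\bot$ is a fixed finite $k$ produces, under the paper's map, a set in $D_k(\bsigma^0_1)\setminus\check D_k(\bsigma^0_1)$, and by Selivanov's result (Theorem~\ref{thmselivanov}(1)) that class is a \emph{single} Wadge degree --- so no descent is possible there. To get a genuine element of $D_\omega\setminus\bigcup_k D_k$ one is forced to make the alternation depth from $\bot$ \emph{unbounded}, and then your invariant is $\omega$ for every $\mathbb{P}_n$ and separates nothing. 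The paper's posets $\p_n$ (Figure~\ref{posetpn}) indeed all have unbounded depth; the parameter $n$ controls the \emph{growth rate} of the branches (branch $k$ has height roughly $2\lfloor k/n\rfloor$). The non-reduction $\p_n\not\preccurlyeq_c\p_m$ for $n<m$ is not obtained by a global invariant but via the game $G_{\mathbb{P}}$: player~$\I$ performs ``right-shifts'' $(w_k,y_k,w_{k+1})$, and one proves (Claim~\ref{claim35}) that any winning ultrapositional strategy can advance by at most one branch per shift; after enough shifts $\I$ reaches a node whose local increasing strength $\Str_{\incr}$ exceeds that of $\II$'s position, contradicting Lemma~\ref{strength}. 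This local, path-dependent argument is the heart of the proof and is absent from your sketch.
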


\setcounter{theorembis}{\getrefnumber{antichain}-1}

\begin{theorembis}
$\big(D_\omega(\bsigma^0_1)(\Pw),\leq_w\!\!\big)$ has infinite antichains.
\end{theorembis}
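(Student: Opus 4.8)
The plan is to follow the route announced in the abstract. By this point of the paper we will have built an order-embedding $\Phi$ of the quasi-order $\big(\mathbb{P}_{\lay},\leq\big)$ -- the class of countable $2$-colored posets, with $P\leq Q$ meaning that there is a one-to-one homomorphism from $P$ to $Q$ -- into $\big(D_\omega(\bsigma^0_1)(\Pw),\leq_w\big)$: concretely, $\Phi$ sends each such poset $P$ to an honest difference of $\omega$ open subsets $\Phi(P)\subseteq\Pw$ in such a way that $P\leq Q$ holds if and only if $\Phi(P)\leq_w\Phi(Q)$. Granting this embedding, the theorem reduces to producing an infinite antichain inside $\big(\mathbb{P}_{\lay},\leq\big)$, since its $\Phi$-image is then an infinite $\leq_w$-antichain consisting of $D_\omega(\bsigma^0_1)$-sets.

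For the antichain inside $\mathbb{P}_{\lay}$ I would use crown posets. For $n\geq 2$ let $K_n$ be the $2$-colored poset on the $2n$ points $x_0,\dots,x_{n-1},y_0,\dots,y_{n-1}$, where each $x_i$ carries the first color and each $y_i$ the second, the relations being exactly $x_i<y_i$ and $x_{i+1}<y_i$ for all $i$ (indices read modulo $n$). Thus $K_n$ has height $2$ and its Hasse diagram is the cycle on $2n$ vertices. The first task is to verify that every $K_n$ really lies in $\mathbb{P}_{\lay}$, so that $\Phi(K_n)$ is an honest $D_\omega(\bsigma^0_1)$-subset of $\Pw$.

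The core of the argument is the rigidity of this family: for $n\neq m$ there is no one-to-one homomorphism $K_n\to K_m$. Such a homomorphism must carry first-colored points to first-colored points and second-colored points to second-colored points, hence each $x_i$ to some $x_j$ and each $y_i$ to some $y_j$; and since in both posets every order relation is a covering relation, the map induces a vertex-injective graph homomorphism between the Hasse diagrams, i.e.\ from the $2n$-cycle into the $2m$-cycle. The resulting closed walk in the $2m$-cycle visits $2n$ distinct vertices; as a cycle has maximum degree two, this can only happen if the walk traverses the whole $2m$-cycle exactly once, which forces $2n=2m$. Hence $\{K_n : n\geq 2\}$ is an infinite antichain in $\big(\mathbb{P}_{\lay},\leq\big)$, and $\{\Phi(K_n) : n\geq 2\}$ is the desired infinite $\leq_w$-antichain in $D_\omega(\bsigma^0_1)(\Pw)$. (Incidentally, bundling these crowns as $P_k=\bigsqcup_{n\geq k}K_n$ yields an infinite strictly decreasing chain, which is what the companion theorem requires.)

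This combinatorial step is short; the real obstacle lies in everything around it -- building $\Phi$ and checking both that it lands in $D_\omega(\bsigma^0_1)(\Pw)$ and that it reflects one-to-one homomorphisms back to $\leq_w$ -- together with the bookkeeping of fitting the crowns into whatever normal form $\mathbb{P}_{\lay}$ imposes. Should the class exclude posets whose comparability graph contains a cycle, one thickens each crown, replacing its vertices by small rigid gadgets (short chains or fans of forced colors) that still carry the cyclic obstruction while remaining inside $\mathbb{P}_{\lay}$; the same argument about distinct vertices forming a cycle in a graph of maximum degree two then applies verbatim, and the conclusion is unchanged.
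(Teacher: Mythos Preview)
There is a genuine gap, and it stems from a misreading of the quasi-order on $\mathbb{P}_{\lay}$. The relation $\preccurlyeq_c$ used in Lemma~\ref{lemmainjectivehomomorphism} and Theorem~\ref{injectivehomomorphism} is given by \emph{arbitrary} color-preserving poset homomorphisms, not one-to-one ones. Under this relation your crowns collapse immediately: the map sending every $x_i$ to $x_0$ and every $y_i$ to $y_0$ is a perfectly good color-preserving homomorphism $K_n\to K_m$ for all $n,m$, so $K_n\preccurlyeq_c K_m$ always holds and the family is a single $\equiv_c$-class, not an antichain. Your rigidity argument (``a vertex-injective walk in a $2m$-cycle visiting $2n$ distinct vertices forces $n=m$'') is correct for injective homomorphisms, but that is not the comparison the embedding reflects.

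A second, independent obstruction is that the crowns do not belong to $\mathbb{P}_{\lay}$ at all. Definition~\ref{defrusp} requires a least element $\bot$, and Definition~\ref{defplay} forbids the configuration $\boldsymbol{\wedge^1_0}$ (a color-$1$ node with two color-$0$ immediate predecessors) from occurring via $\rightarrowtail_c$; in $K_n$ every top $y_i$ has exactly this shape. Your suggested fix of ``thickening'' by gadgets does not help with the first problem: once homomorphisms need not be injective, any finite pattern can be collapsed, so cyclic obstructions of this kind carry no $\preccurlyeq_c$-information. The paper's antichain $(\q_n)_{n\in\omega^+}$ is accordingly much more delicate: each $\q_n$ has infinitely many branches of unbounded height, and the incomparability is established by tracking increasing and decreasing \emph{strengths} along runs of the reduction game $G_{\mathbb{P}}$ (Lemma~\ref{strength} and Claim~\ref{claim40}), not by any local graph-theoretic invariant.
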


These results are obtained through a generalization of a construction introduced by Selivanov in \cite{Selivanov2005bis}. More precisely, we define an order-embedding from a class of 2-colored countable posets $\mathbb{P}_{\lay}$ (Definition \ref{defplay}) endowed with the usual notion of comparison by homomorphisms into the Wadge order on the $\bdelta^0_2$-degrees of $\scott$, where a degree is an equivalence class induced by $\leq_w$:

\setcounter{theorembis}{\getrefnumber{injectivehomomorphism}-1}

\begin{theorembis}
There exists an order-embedding:
\begin{align*}
\sfrac{({\mathbb{P}_{\reg}},\preccurlyeq_c)}{\equiv_c} &\to \sfrac{(\bdelta^0_2(\Pw),\leq_w)}{\equiv_w}.
\end{align*}
\end{theorembis}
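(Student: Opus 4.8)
The plan is to construct, for each 2-colored countable poset $P\in\mathbb{P}_{\lay}$, a concrete $\bdelta^0_2$-subset $S(P)$ of $\scott$ in such a way that the homomorphism quasi-order on $\mathbb{P}_{\lay}$ is faithfully mirrored by Wadge reducibility: $P\preccurlyeq_c Q$ should hold if and only if $S(P)\leq_w S(Q)$. The construction of $S(P)$ should generalize Selivanov's construction from \cite{Selivanov2005bis}. Concretely, one encodes a point $x\in\scott$ as a description of a branch through (or a partial assignment to) the poset $P$: the finite subsets of $\omega$ that $x$ contains are read as approximations telling us which node of $P$ we are currently ``sitting at'' and which color has been committed to, with the two colors playing the role of ``in the set'' versus ``out of the set''. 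Because the topology on $\scott$ has basic opens indexed by finite subsets of $\omega$, containment of larger and larger finite sets corresponds to moving upward along the poset $P$, and a point that stabilizes at some node $p$ lands in $S(P)$ or its complement according to the color of $p$; points whose approximations keep growing forever are handled by a tail/limit convention. One checks by inspection of the definition that $S(P)$ is a difference of at most $\omega$ open sets when $P$ has the structural constraints imposed in Definition \ref{defplay}, which also gives the bound needed for Theorems I and II.

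The two directions of the equivalence are then proved separately. For the forward (easy) direction, given a homomorphism $h\colon P\to Q$, one builds a continuous $f\colon\scott\to\scott$ by transporting approximations: if $x$'s approximations describe a path in $P$, then $f(x)$'s approximations describe the $h$-image path in $Q$, with colors preserved because $h$ is color-preserving; continuity is immediate since $f$ is defined by a monotone map on the finite approximations (a ``Scott-continuous'' recipe), and $x\in S(P)\iff f(x)\in S(Q)$ follows from tracking the stabilization node through $h$. For the backward (hard) direction, one assumes $S(P)\leq_w S(Q)$ via a continuous $f$ and must extract a homomorphism $P\to Q$. The standard strategy is to feed $f$ suitable ``generic'' inputs: for each node $p\in P$ one designs a point $x_p\in\scott$ (or a family of points, or a strategy in the associated Wadge game) that probes $p$, observes where $f$ sends it in the $Q$-picture, and reads off a node $h(p)\in Q$; monotonicity of the poset encoding plus continuity of $f$ forces $h$ to respect the order and the colors of $P$. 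Passing from this map to an order-embedding on degrees is then a formality: $S$ descends to a well-defined map on $\equiv_c$-classes because homomorphisms compose and identity is a homomorphism, and the biconditional $P\preccurlyeq_c Q\iff S(P)\leq_w S(Q)$ says exactly that the induced map $\sfrac{(\mathbb{P}_{\lay},\preccurlyeq_c)}{\equiv_c}\to\sfrac{(\bdelta^0_2(\scott),\leq_w)}{\equiv_w}$ is an order-embedding.

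The main obstacle will be the backward direction, i.e., recovering a homomorphism from an arbitrary continuous reduction. A continuous $f$ has no reason to respect the layered bookkeeping built into $S(P)$ and $S(Q)$: it may send an approximation describing node $p$ to something describing a node of $Q$ that only emerges after reading much more of the input, it may ``wait'', branch, or behave incoherently on inputs that are not in the intended canonical form. The fix is to be careful in the design of $S(P)$ so that the encoding is robust -- every point of $\scott$, not just the canonical ones, gets classified in a way that still reflects the poset structure -- and then to choose the probing inputs $x_p$ so that $f$ is pinned down on a rich enough set of inputs to force coherence. This is precisely where the structural hypotheses in Definition \ref{defplay} on the posets in $\mathbb{P}_{\lay}$ (on colorings, on the shape of the order, e.g. being tree-like or forest-like, and on local finiteness) must be used; they guarantee that the combinatorial extraction of $h$ goes through. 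Once this lemma is established, the theorem follows, and combined with the later results that $(\mathbb{P}_{\lay},\preccurlyeq_c)$ has infinite strictly decreasing chains and infinite antichains, it yields Theorems I and II.
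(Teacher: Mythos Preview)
Your plan matches the paper's approach: the set $\A_\p$ is defined as the collection of predecessor-labels $l_p(p)\in\finite$ of the color-$1$ nodes $p$; the forward reduction transports labels along the given homomorphism and is extended from $\finite$ to all of $\scott$ by the Scott-continuity lemma (Lemma~\ref{goubault}); and the backward homomorphism is extracted by feeding the continuous $f$ the label $l_p(p)$ of each node and reading off the supremum $t_p$ in $Q$ of those nodes whose labels lie inside $f(l_p(p))$ --- exactly your ``probing with $x_p$'' idea, with the delicate cases resolved via the unique immediate predecessor and successor $t_p^-,t_p^+$ of a color-$1$ node guaranteed by the coloring constraints in Definition~\ref{defplay}.

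One correction is needed: the structural constraints of Definition~\ref{defplay} alone yield only $\A_\p\in\bdelta^0_2(\scott)$ (Claim~\ref{claim23}), not $\A_\p\in D_\omega(\bsigma^0_1)(\scott)$ as you assert. The $D_\omega$ bound (Proposition~\ref{orderembedding}) requires the additional finite-branching hypothesis that defines the proper subclass $\mathbb{P}_{\fin}\subsetneq\mathbb{P}_{\lay}$, and it is posets from this restricted class that are used to prove Theorems~\ref{illfounded} and~\ref{antichain}. So your parenthetical ``which also gives the bound needed for Theorems~I and~II'' conflates two different layers of the argument.
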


Different approaches have already been considered for tackling the problem of classifying subsets of non-Polish spaces according to their topological complexity. For instance, Pequignot studied the quasi-order obtained from reductions via admissible representations \cite{Pequignot2015}, and Motto Ros, Schlicht and Selivanov investigated the quasi-order obtained from classes of reductions that are larger than the continuous ones \cite{Mottoros2015}.

The article is organized as follows. We fix notations and general definitions in Section 2, where we also recall results such as the characterizations of some of the topological classes obtained by Selivanov in \cite{Selivanov2005bis}. In Section 3, we define the class of posets $\mathbb{P}_{\lay}$ (Definition \ref{defplay}) that we embed into the Wadge order of $\scott$ (Theorem \ref{injectivehomomorphism}) in Section 4. This order-embedding is the main construction of this article. A game characterization of reductions between 2-colored posets is introduced in Section 5 (Definition \ref{posetgame}) in order to show, in Section 6 and 7, that the Wadge order of $\scott$ is ill-founded (Theorem \ref{illfounded}) and that it has infinite antichains (Theorem \ref{antichain}). We conclude in Section 8 with open questions.

\section{Preliminaries}


\subsection{General notations}

As usual, we denote by $\w$ or $\N$ the set of all integers and by $\aleph_0$ its cardinality. We also write $\w^+$ for $\w\setminus\{0\}$ and $\w_1$ for the first uncountable ordinal. We use the letters $i,j,k,l,m,n$ for integers and $\alpha,\beta,\gamma$ for arbitrary ordinals. Since every ordinal is regarded as the set of its predecessors, if $n\in\w,$ the notation $x\cap n$ stands for $x\cap\{0,1,\dots,n-1\}.$  

Given any sets $X,Y$, if $f:X\to Y$ is a function, $A\subseteq X,$ and $B\subseteq Y$, then we write $f[A]=\{f(x)\mid x\in A\}$ and $f^{-1}[B]=\{x\mid f(x)\in B\}.$ If $f$ is injective, we write $f^{-1}(y)$ for the unique element $x\in X$ such that $f(x)=y$.

An $X$-sequence -- or simply a sequence -- is a function $s:\alpha\to X$ -- denoted by $(s_\beta)_{\beta<\alpha}$ -- from some ordinal $\alpha$ called the length of the sequence to $X$. In this article, we will mainly consider sequences such that $\alpha\in \w+1= \omega\cup\{\omega\}$. We use the letters $s,t$ to denote sequences. The only sequence of length $0$ -- the empty sequence -- is denoted by $\varnothing$. If $s,t$ are sequences, then $t$ is a prefix of $s,$ written $t\sqsubseteq s$, if $\lh(t)\leq \lh(s)$ and $s_k=t_k$ for all $k<\lh(t)$. If $t\sqsubseteq s$ but $s\not\sqsubseteq t$, we write $t\sqsubset s.$ If $s,t$ are $X$-sequences, the concatenation of $s$ and $t$ is defined by $s^\frown t=(s_0,\dots,s_{\lh(s)-1},t_0,\dots,t_{\lh(t)-1})$. The set of all $X$-sequences of finite length is denoted by $X^{<\w}.$

A tree $T\subseteq X^{<\w}$ is a set of finite $X$-sequences closed under the prefix relation\footnote{If $t\in T$ and $s\sqsubseteq t$, then $s\in T$.}. It is well-founded if it has no infinite branch\footnote{An infinite branch is a function $f:\w\to T$ such that, if $n<m,$ then $f(n)\sqsubset f(m)$.}, in which case the rank of any $t\in T$ is (well-)defined by $\sqsupseteq$-induction: $\rk_T(t)=0$ if $t$ is $\sqsubseteq$-maximal and $\rk_T(t)=\sup\{\rk_T(s)+1\mid t\sqsubset s\}$ otherwise. The rank $\rk(T)$ of a non-empty well-founded tree $T$ is the ordinal $\rk_T(\varnothing)$.

\subsection{Order-theoretic notations}

A quasi-order on a set $Q$ is any reflexive and transitive relation\footnote{A binary relation $\leq_q$ on $Q$ is reflexive if, for all $q\in Q$, $(q,q)\in\ \leq_q$, and transitive if, for any $q_0,q_1,q_2\in Q,$ $(q_0,q_1),(q_1,q_2)\in\ \leq_q$ implies $(q_0,q_2)\in\ \leq_q$.} $\leq_q\ \subseteq Q\times Q$. Whenever $\leq_q$ is clear from the context, we write $Q$ for the couple $(Q,\leq_q)$. We will use the letters $P,Q$ for quasi-orders and $p\in P,q\in Q$ for their elements. As usual, $q_0\leq_q q_1$ stands for $(q_0,q_1)\in\ \leq_q$, and $q_0<_qq_1$ for $q_0\leq_q q_1$ but $q_1\not\leq_q q_0$. If $q_0\nleq_q q_1$ and $q_1\nleq_q q_0,$ then $q_0$ and $q_1$ are said to be incomparable which is denoted by $q_0\perp_q q_1.$ If $Q$ is a quasi-order and $P\subseteq Q,$ then $P$ equipped with the induced relation is a quasi-order. An infinite antichain in $Q$ is a sequence $(q_n)_{n<\omega}$ of pairwise incomparable elements, and a strictly $\leq_q$-increasing (resp. strictly $\leq_q$-decreasing) sequence is a sequence $(q_n)_{n<\omega}$ such that $q_{n}<_q q_{n+1}$ (resp. $q_{n+1}<_q q_{n}$) for all $n\in\omega.$ A well-quasi-order is a quasi-order $Q$ that has no infinite antichain and no strictly $\leq_q$-decreasing sequence. We denote by $\Pred(q)=\{q'\in Q\mid q'\leq_q q\}$ the set of predecessors of $q\in Q,$ and by $\Pred_{\imm}(q)=\{q'\in Q\mid (q'<_q q) \land \neg\exists q''\in Q\ (q'<_q q'' \land q''<_q q)\}$ the set of its immediate predecessors.

We use homomorphisms\footnote{A homomorphism between two quasi-orders $P$ and $Q$ is a function $\varphi:P\to Q$ such that for any $p_0,p_1\in P$, if $p_0\leq_p p_1$, then $\varphi(p_0)\leq_q\varphi(p_1)$.} in order to compare structures. If there exists an injective homomorphism $\varphi:P\to Q$, then we write $P\injectivehomo Q;$ if it is injective and preserves immediate predecessors\footnote{A function $\varphi:P\to Q$ preserves immediate predecessors if, for any $p_0,p_1\in P$, whenever $p_0\in\Pred(p_1)$, then $\varphi(p_0)\in\Pred\big(\varphi(p_1)\big)$.}, then we write $P\rightarrowtail Q$. Notice that $P\rightarrowtail Q$ is more rigid than $P\injectivehomo Q$, hence, it describes more local behaviors.

If $q$ and $q'$ are elements of a quasi-order $Q$ such that $q\leq_q q' \text{ and }q'\leq_q q$, then we write $q\equiv_q q'$. The relation $\equiv_q$ is an equivalence relation whose equivalence classes are denoted by $[q]=\{q'\in Q\mid q\equiv_q q'\}$. The quotient set $\sfrac{Q}{\equiv_q}=\{[q]\mid q\in Q\}$ inherits the quasi-order $\leq_q.$ More precisely, we set $[q]\leq_q [q']$ if and only if $q\leq q'.$ The set $\sfrac{Q}{\equiv_q}$ equipped with $\leq_q$ is a poset, i.e., a quasi-order whose order-relation is a partial order\footnote{A quasi-order $(P,\leq_p)$ is a partial order if $\leq_p$ is antisymmetric, i.e., for any $p_0,p_1\in P$, $p_0\leq_p p_1$ and $p_1\leq_p p_0$ implies $p_0=p_1$.}.

We denote the class of countable posets by $\mathbb{P}$. If $P\in\mathbb{P},$ then we can always consider $\leq_p\ \subseteq \alpha\times\alpha$ where $\alpha \in \omega\cup\{\w\}$ via any bijection: $P\leftrightarrow \alpha$; so that all the posets we consider are posets on $P\in \omega \cup \{\w\}$. An order-embedding is a homomorphism between two posets $\varphi:P\to Q$ such that for any $p_0,p_1\in P$, $p_0\leq_p p_1$ if and only if $\varphi(p_0)\leq_q\varphi(p_1)$. Thus, order-embeddings are injective. The main posets studied in this article will be the set of \emph{finite} subsets of the integers ordered by inclusion $\big(\finite,\subseteq\!\!\big)$, and the set of \emph{infinite} subsets of the integers ordered by inclusion $\big(\infinite,\subseteq\!\!\big)$.

A 2-colored poset is a triple $\p=(P,\leq_p,\col_p)$ where $\leq_p$ is a partial order on $P$ and $\col_p:P\to 2$ is a 2-coloring. We usually use the letters $\p,\q$ for 2-colored posets. We also compare them via homomorphisms\footnote{A homomorphism between $\p,\q$ two 2-colored posets is a quasi-order homomorphism $\varphi: P \to Q$ such that for all $p\in P,$ $\col_p(p)=\col_q\big(\varphi(p)\big)$.}. If there exists a homomorphism from $\p$ to $\q$, then we write $\p\preccurlyeq_c \q;$ if this homomorphism is injective, then we write $\p\injectivehomo_c \q;$ if it is injective and preserves immediate predecessors, then we write $\p\rightarrowtail_c \q$. Notice that $\preccurlyeq_c$ is a quasi-order on 2-colored posets. We will denote by $\equiv_c$ the induced equivalence relation.
 
\subsection{Topological notations}

This article focuses on the study of a particular topological space first introduced by Scott as a universal model of the semantic of $\lambda$-calculus \cite{Scott1976}.

\begin{definition}
The \emph{Scott domain} is the power set of the integers $\mathcal{P}(\omega)$ equipped with the topology generated by the basis $$\big\{\OO_F \mid F\in \mathcal{P}_{<\omega}(\omega)\big\} \text{, where } \OO_F=\{x\subseteq \omega \mid F\subseteq x\}.$$
\end{definition}

The Scott domain is a non-metrizable -- in fact non-Hausdorff ($T_2$), and even non-Fréchet ($T_1$) -- compact space which is connected and Kolmogorov ($T_0$).

From now on and throughout this article, we use the notation $\Pw$ for the Scott domain; $F,G,H$ for finite subsets of $\omega;$ $x,y,z$ for arbitrary subsets of $\omega$; and $\A,\B,\C$ for subsets of $\Pw.$ 

Our ultimate goal is to study the topological complexity of subsets of $\scott.$ In metrizable spaces, this study begins with the definition of the Borel hierarchy (Section 11.B in \cite{Kechris1995}). However, the same construction would not work with $\scott$ for it is not metrizable. To overcome this obstacle, Selivanov introduced a new version of the Borel hierarchy for arbitrary spaces \cite{Selivanov2005bis, Selivanov2006}. This generalization extends the original one and induces a well-behaved hierarchy (see \cite{Debrecht2012} for more details). In the rest of this section, $\mathcal{T}$ denotes a topology on a set $X$. As usual, we denote by $X$ both the topological space and the underlying set without any risk of confusion.

\begin{definition} \label{defborel}
We define $\bsigma^0_1(X)=\mathcal{T},$ and for $1<\alpha<\w_1,$ $$\bsigma^0_\alpha(X)=\left\{\bigcup_{n\in\omega}(B_n\setminus B'_n)\ \middle\vert\  B_n,B'_n\in \bsigma^0_{\beta_n}(X),\ \beta_n<\alpha\right\}.$$ \\ We also define $\bpi^0_\alpha(X)=\{A\subseteq X \mid X\setminus A\in\bsigma^0_\alpha(X)\}, \bdelta^0_\alpha(X)=\bsigma^0_\alpha(X)\cap \bpi^0_\alpha(X)$ for $\alpha<\w_1.$ Finally, we define the {Borel sets} as $\bb(X)=\bigcup_{\alpha\in\omega_1} \bsigma^0_\alpha(X)$. \\
The Borel hierarchy on $X$ is the quasi-order $$\Big(\big\{\bsigma^0_\alpha(X),\bpi^0_\alpha(X)\big\}_{\alpha\in \w_1},\subseteq\!\! \Big).$$
\end{definition}

As customary in descriptive set theory, we consider the Hausdorff-Kuratowski difference hierarchy as a first refinement of the Borel hierarchy (see Section 22.E in \cite{Kechris1995}). Its definition relies on the difference operation.

\begin{definition}
If $0<\alpha<\omega_1$ and $(A_\beta)_{\beta<\alpha}$ is a sequence of subsets of $X$, then $$D_\alpha\big((A_\beta)_{\beta<\alpha}\big)=\bigcup\left\{A_\beta\setminus \cup_{\gamma<\beta} A_\gamma \ \middle\vert\,
\begin{tabular}{l}
\text{$\beta<\alpha,$ and} \\ \text{$\alpha$ and $\beta$ have different parities}
\end{tabular}\right\}\subseteq X.$$
If $0<\alpha,\beta<\omega_1$, then $$D_{\alpha}\big(\bsigma^0_\beta\big)(X)=\Big\{D_\alpha\big((A_\gamma)_{\gamma<\alpha}\big)\ \vert\  (A_\gamma)_{\gamma<\alpha}\subseteq \bsigma^0_\beta(X)\Big\}\subseteq \mathcal{P}(X).$$ Finally, we set $\check{D}_{\alpha}\big(\bsigma^0_\beta\big)(X)=\Big\{A\subseteq X \mid X\setminus A \in D_\alpha\big(\bsigma^0_\beta\big)(X)\Big\}.$ \\
The Hausdorff-Kuratowski difference hierarchy on $X$ is the quasi-order $$\Big(\big\{{D}_{\alpha}\big(\bsigma^0_\beta\big)(X),\check{D}_{\alpha}\big(\bsigma^0_\beta\big)(X)\big\}_{\alpha,\beta\in \w_1},\subseteq\!\! \Big).$$
\end{definition}

All Borel and Hausdorff-Kuratowski classes previously defined are closed under continuous preimages\footnote{A class of subsets $\Gamma(X)\subseteq \mathcal{P}(X)$ is closed under continuous preimages if for any $A\in \Gamma(X), f:X\to X$ continuous, then $f^{-1}[A]\in \Gamma(X)$.}. This suggests a natural further investigation of topological complexity through the lens of Wadge reducibility, a notion of comparison first studied thoroughly by Wadge in his PhD thesis \cite{Wadge1984}.

\begin{definition}
Let $A,B\subseteq X.$ The set $A$ is \emph{Wadge reducible} to $B$, written $A\leq_w B$, if there exists a continuous function $f:X\to X$ such that for all $x\in X$, $$x\in A\Longleftrightarrow f(x)\in B,$$ i.e., $f^{-1}[B]=A.$ \\
$A$ is \emph{Wadge equivalent} to $B$, written $A\equiv_w B,$ if $A\leq_w B$ and $B\leq_w A$ hold.
\end{definition}

Since both the identity and the composition of continuous functions are continuous, $\leq_w$ induces a quasi-order on the subset of $X$, and thus the binary relations $<_w,$ $\nleq_w$ and $\perp_w$ are well-defined. 

\begin{definition}
Let $X$ be any topological space and $\Gamma(X)\subseteq \mathcal{P}\big(X\big)$ be any class closed under continuous preimages. The {Wadge order on the $\Gamma$-subsets of $X$} is the quasi-order $\big(\Gamma(X),\leq_w\!\!\big).$
\end{definition}

For the equivalence relation $\equiv_w,$ we have a special terminology:

\begin{definition}
Let $X$ be any topological space, $A\subseteq X$ and $\Gamma(X)\subseteq \mathcal{P}\big(X\big)$ be any class closed under continuous preimages. \\ The Wadge degree of $A$ is its $\equiv_w$-equivalence class $[A]=\{B\subseteq A\mid A\equiv_w B\}$. \\ The {Wadge order on the $\Gamma$-degrees of $X$} is the poset $\sfrac{\big(\Gamma(X),\leq_w\!\!\big)}{\equiv_w}.$ 
\end{definition}


\subsection{Selivanov's toolbox}

We will restrict ourselves to the study of the quasi-order $\big(\bdelta^0_2(\scott),\leq_w\!\!\big).$ As mentioned in the Introduction, some results have already been obtained on this quasi-order in \cite{Selivanov2005bis} and \cite{Becher2015bis}. The main result of this article (Theorem \ref{injectivehomomorphism}) comes as a generalization of a construction introduced by Selivanov in \cite{Selivanov2005bis} that we recall here.

\begin{definition}[pp.56 in \cite{Selivanov2005bis}]
Let $T_\alpha$ be any well-founded tree of rank $\w\leq\alpha<\omega_1,$ $\xi:\omega^{<\omega}\to \omega$ be any injective mapping such that $\xi(\varnothing)=0,$ and $e:T_\alpha\to \finite$ be defined as $e(s)=\{\xi(t) \mid t\sqsubseteq s\}.$ The sets $Y_\alpha$ and $Z_\alpha$ are defined by:
\begin{enumerate}[nolistsep]
\item $Y_\alpha=e\big[T^1_\alpha\big]$, where $T^1_\alpha=\{s\in T_\alpha\mid \lh(s) \text{ is odd}\},$
\item $Z_\alpha=B(T_\alpha)\cup Y_\alpha,$ where $B(T_\alpha)=\{x\subseteq \omega \mid \forall s\in T_\alpha\ x\nsubseteq e(s)\}.$
\end{enumerate}
\end{definition}

In \cite{Selivanov2005bis}, it is shown that, given any $\w\leq\alpha<\omega_1,$  $Y_\alpha$ and $Z_\alpha$ are differences of $\alpha$ open sets, Wadge incomparable, and $\leq_w$-minimal among true differences of $\alpha$ open sets. More precisely:

\begin{theorem}[Propositions 5.9 and 6.4 in \cite{Selivanov2005bis}] \label{thmselivanov}
For $n\in\omega$, $\omega\leq\alpha,\beta<\omega_1$ and $\A\in \bdelta^0_2(\scott)\setminus \check{D}_\alpha(\bsigma^0_1)(\scott)$, we have:
\begin{enumerate}[nolistsep]
\item $D_n(\bsigma^0_1)(\scott)\setminus \check{D}_n(\bsigma^0_1)(\scott)$ and $\check{D}_n(\bsigma^0_1)(\scott)\setminus {D}_n(\bsigma^0_1)(\scott)$ form two incomparable Wadge degrees,
\item $Y_\alpha, Z_\alpha\in D_\alpha(\bsigma^0_1)(\scott)\setminus \check{D}_\alpha(\bsigma^0_1)(\scott),$
\item $Y_\alpha\perp_w Z_\beta,$
\item if $\omega\in \A,$ then $Z_\alpha\leq_w \A,$ 
\item if $\omega\notin\A,$ then $Y_\alpha\leq_w \A$.
\end{enumerate}
\end{theorem}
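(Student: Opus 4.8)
The plan is to isolate two structural features of $\scott$ that power every item, and then to prove the five claims in an order respecting their logical dependencies, leaving the reduction constructions for last.

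First I would record that $e$ is an order-embedding of $(T_\alpha,\sqsubseteq)$ into $\big(\finite,\subseteq\!\!\big)$: injectivity of $\xi$ gives $e(s)\subseteq e(s')\iff s\sqsubseteq s'$, and $e(s)=e(s^-)\cup\{\xi(s)\}$ for the immediate predecessor $s^-$. Hence the basic open sets $U_s:=\OO_{e(s)}$ satisfy $e(t)\in U_s\iff s\sqsubseteq t$, and for any $x$ the family $T_x:=\{s\in T_\alpha\mid e(s)\subseteq x\}$ is a $\sqsubseteq$-closed subtree recording which $U_s$ contain $x$. Second, every continuous $f\colon\scott\to\scott$ is monotone for $\subseteq$ (open sets are upward closed), so $f(x)\subseteq f(\w)$ for all $x$: the top point $\w$ bounds the whole range of $f$. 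I would also note the separating invariant $\w\in B(T_\alpha)\subseteq Z_\alpha$ while $\w\notin Y_\alpha$, since $Y_\alpha$ consists of finite points only.

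For item 2, to get $Y_\alpha,Z_\alpha\in D_\alpha(\bsigma^0_1)(\scott)$ I would exhibit an explicit $\alpha$-difference: read off the membership of $x$ from the $\sqsubseteq$-maximal nodes of $T_x$, indexing an open family by $\rk_{T_\alpha}$ built from the $U_s$ so that crossing from a node to its extensions toggles in/out exactly as $D_\alpha$ prescribes, the odd-length clause defining $T^1_\alpha$ matching the parity clause of $D_\alpha$; the identity is then checked by $\sqsubseteq$-induction (equivalently, induction on $\rk(T_\alpha)=\alpha$), and this simultaneously shows $Z_\alpha\in\bdelta^0_2(\scott)$. The lower bound $Y_\alpha,Z_\alpha\notin\check{D}_\alpha(\bsigma^0_1)(\scott)$ needs a separate, direct rank argument: it cannot come from minimality, which only bounds the sets from above. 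Here one feeds the points $e(s)$ together with the limit $\w$ into any hypothetical $\check{D}_\alpha$-representation and inducts on $\rk_{T_\alpha}(s)$, forcing the wrong parity at the root $\varnothing$, where $\w$ sits as the limit of the branches. This parity computation is the delicate part of item 2.

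With item 2 in hand, the incomparabilities follow from the $\w$-trick and closure under continuous preimages, with no new construction. If $f$ witnessed $Y_\alpha\leq_w Z_\beta$ then $\w\notin Y_\alpha$ gives $f(\w)\notin Z_\beta$, hence $f(\w)\notin B(T_\beta)$, i.e. $f(\w)\subseteq e(s)$ for some $s$; monotonicity forces the entire range of $f$ into the finite set $\PP(e(s))$, so $Y_\alpha=f^{-1}[Z_\beta]$ is a finite Boolean combination of open sets, whence $Y_\alpha\in D_k(\bsigma^0_1)(\scott)\subseteq\check{D}_\alpha(\bsigma^0_1)(\scott)$ for some $k<\w\le\alpha$, contradicting item 2. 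Symmetrically $\w\in Z_\beta$ kills $Z_\beta\leq_w Y_\alpha$, giving item 3. For item 1, if $A\in D_n(\bsigma^0_1)(\scott)\setminus\check{D}_n(\bsigma^0_1)(\scott)$ and $B\in\check{D}_n(\bsigma^0_1)(\scott)\setminus D_n(\bsigma^0_1)(\scott)$, then $A\leq_w B$ would put $A\in\check{D}_n(\bsigma^0_1)(\scott)$ by closure, and symmetrically, so the two degrees are incomparable.

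The core difficulty, and the main obstacle, is items 4 and 5: building, from the bare hypothesis $\A\in\bdelta^0_2(\scott)\setminus\check{D}_\alpha(\bsigma^0_1)(\scott)$, a continuous reduction of $Z_\alpha$ (when $\w\in\A$) or $Y_\alpha$ (when $\w\notin\A$) into $\A$. I would construct $f$ by recursion along $T_\alpha$ as a $\sqsubseteq$-monotone assignment $e(s)\mapsto a_s$, extended Scott-continuously from its values on the generating finite points, with $a_\varnothing$ chosen to match $\A$ at $\w$ and with $a_s\in\A$ alternating with $\lh(s)$. The crux is that $a_s$ must additionally witness that the $D_\alpha$-structure of $\A$ below it has rank at least $\rk_{T_\alpha}(s)$; the existence of such witnesses at every node is exactly where the hypothesis $\A\notin\check{D}_\alpha(\bsigma^0_1)(\scott)$ is consumed, extracted by induction on rank (equivalently, by a winning strategy in the associated difference game) that keeps the not-yet-captured complexity available for the next alternation. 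Verifying $f^{-1}[\A]=Z_\alpha$ (resp. $Y_\alpha$) then reduces to checking that $T_x$ induces matching in/out patterns on both sides. Finally, the single-degree half of item 1 is the finite-level analogue: at finite $n$ a Wadge-style reduction shows every true $D_n$ set is itself $D_n$-hard, so all of them collapse to one degree. Tellingly, this collapse is special to finite levels and fails at level $\w$, where items 4 and 5 yield only minimality and not completeness, which is precisely the slack that the remainder of the paper exploits to defeat well-quasi-orderedness.
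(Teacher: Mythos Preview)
The paper does not prove this theorem; it is quoted from Selivanov's work (Propositions 5.9 and 6.4 in \cite{Selivanov2005bis}) and stated without proof. The only hint the paper gives is that ``the proof of Theorem \ref{thmselivanov} makes use of Selivanov's characterizations'' and then recalls Theorem \ref{chardelta02} and Corollary \ref{chardiff} before moving on to the new material. So there is no proof in the paper to compare your proposal against.

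That said, your outline is a reasonable reconstruction and uses precisely the two structural facts the paper singles out: approximability (Theorem \ref{chardelta02}) and the $1$-alternating-tree criterion (Corollary \ref{chardiff}). Your monotonicity/$\w$-trick for items 1 and 3 is the standard device. Two remarks on efficiency. First, for the lower bound in item 2 you propose feeding the points $e(s)$ into a hypothetical $\check{D}_\alpha$-representation and running a parity induction; it is shorter to invoke Corollary \ref{chardiff} directly: the assignment $s\mapsto e(s)$ is already a $1$-alternating tree of rank $\alpha$ for $\scott\setminus Y_\alpha$ (and for $\scott\setminus Z_\alpha$), so these complements fail $D_\alpha$, i.e., $Y_\alpha,Z_\alpha\notin\check{D}_\alpha(\bsigma^0_1)(\scott)$. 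Second, for items 4 and 5 the same corollary applied to $\scott\setminus\A$ (which is $\bdelta^0_2$ and not $D_\alpha$) yields a $1$-alternating tree of rank $\alpha$ for $\A$; this tree is exactly the scaffolding you need for your recursive assignment $e(s)\mapsto a_s$, replacing the informal phrase ``the hypothesis is consumed'' by a concrete object.
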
 

The proof of Theorem \ref{thmselivanov} makes use of Selivanov's characterizations of the $\bdelta^0_2$-subsets and of the $D_\alpha\big(\bsigma^0_1\big)$-subsets of $\scott$. Since our proof will also require these characterizations, we first recall them. For this purpose, if $x, y\in \scott$ such that $x\subseteq y$, we introduce the notation $$[x,y]=\{z\in\Pw \mid x\subseteq z\subseteq y\}.$$

\begin{definition}[Definition 2.4 in \cite{Selivanov2005bis}]
$\A\subseteq \Pw$ is \emph{approximable} if, for all $x\in\A,$ there exists $F\in\mathcal{P}_{<\omega}(\omega)$ such that $F\subseteq x$ and $[F,x]\subseteq \A.$
\end{definition}

A subset $\A$ of $\scott$ is $\bdelta^0_2$ if the membership of any subset $x\subseteq \w$ to $\A$ can be approximated by a finite subset of $x$. More precisely:

\begin{theorem}[Theorem 3.12 in \cite{Selivanov2005bis}] \label{chardelta02}
Let $\A\subseteq \Pw.$ 
\begin{center}
$\A\in\bdelta^0_2(\scott)$ $\Longleftrightarrow$ $\A$ and $\scott \setminus\A$ are approximable.
\end{center}
\end{theorem}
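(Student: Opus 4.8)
The plan is to prove the two implications separately, relying throughout on the elementary fact that every open subset of $\Pw$ is upward-closed for $\subseteq$: if $x$ lies in an open set $U$, then $\OO_F\subseteq U$ for some finite $F\subseteq x$, and any $y\supseteq x$ then satisfies $y\in\OO_F\subseteq U$. I will also use that the statement is symmetric under $\A\leftrightarrow\Pw\setminus\A$, since $\bdelta^0_2(\Pw)$ is closed under complements.

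For $(\Rightarrow)$, I would first prove the more general fact that every set in $\bsigma^0_2(\Pw)$ is approximable, and then apply it to $\A$ and to $\Pw\setminus\A$ (both of which lie in $\bdelta^0_2(\Pw)\subseteq\bsigma^0_2(\Pw)$). Writing $\A=\bigcup_n(U_n\setminus V_n)$ with all $U_n,V_n$ open and taking $x\in\A$, I pick $n$ with $x\in U_n\setminus V_n$ and a finite $F\subseteq x$ with $\OO_F\subseteq U_n$; then for every $z$ with $F\subseteq z\subseteq x$ one has $z\in\OO_F\subseteq U_n$, while $z\in V_n$ would force $x\in V_n$ by upward-closedness, so $z\notin V_n$. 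Hence $[F,x]\subseteq U_n\setminus V_n\subseteq\A$. This direction needs nothing beyond the definitions.

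For $(\Leftarrow)$, assume both $\A$ and $\Pw\setminus\A$ are approximable; by the symmetry it suffices to prove $\A\in\bsigma^0_2(\Pw)$. For each finite $F$ set $C_F=\{z\mid F\subseteq z\text{ and }[F,z]\subseteq\A\}$; then $C_F\subseteq\A$ (since $z\in[F,z]$), and approximability of $\A$ gives $\A\subseteq\bigcup\{C_F\mid F\in\finite\}$, a countable union, so it remains to see that each $C_F$ is a difference of two open sets. The heart of the argument is the claim that for $z\supseteq F$ one has $z\in C_F$ if and only if $[F,G]\subseteq\A$ for every finite $G$ with $F\subseteq G\subseteq z$. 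The forward direction is immediate from $[F,G]\subseteq[F,z]$; for the converse, if some $w$ with $F\subseteq w\subseteq z$ had $w\notin\A$, then approximability of $\Pw\setminus\A$ would give a finite $F'\subseteq w$ with $[F',w]\subseteq\Pw\setminus\A$, and then $G:=F\cup F'$, which is finite with $F\subseteq G\subseteq w\subseteq z$, would satisfy simultaneously $G\in[F,G]\subseteq\A$ and $G\in[F',w]\subseteq\Pw\setminus\A$ — a contradiction. Granting this, $\OO_F\setminus C_F=\bigcup\{\OO_G\mid F\subseteq G\in\finite,\ [F,G]\not\subseteq\A\}$ is open, so $C_F=\OO_F\setminus(\OO_F\setminus C_F)$ is a difference of two open sets and $\A=\bigcup_{F\in\finite}C_F$ is of the form prescribed in Definition \ref{defborel}, i.e. $\A\in\bsigma^0_2(\Pw)$; applying the same to $\Pw\setminus\A$ then yields $\A\in\bdelta^0_2(\Pw)$.

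I expect the only genuinely delicate point to be the equivalence lemma in $(\Leftarrow)$: it is precisely the use there of approximability of the complement — to upgrade information about the finite members of an interval $[F,z]$ to the whole interval — that makes the theorem characterize $\bdelta^0_2(\Pw)$ rather than merely $\bsigma^0_2(\Pw)$. Everything else is routine unwinding of the Scott topology and of the generalized Borel hierarchy.
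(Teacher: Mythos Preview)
The paper does not prove this theorem at all: it is stated in Section~2.4 (``Selivanov's toolbox'') purely as a citation of Theorem~3.12 in \cite{Selivanov2005bis}, with no accompanying proof. There is therefore nothing in the present paper to compare your argument against.

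That said, your proof is correct. For $(\Rightarrow)$, the observation that every $\bsigma^0_2$ set is approximable (via upward-closure of open sets) is exactly right and immediately handles both $\A$ and its complement. For $(\Leftarrow)$, your decomposition $\A=\bigcup_{F\in\finite}C_F$ with $C_F=\{z\supseteq F\mid [F,z]\subseteq\A\}$, together with the key equivalence
\[
z\in C_F \iff \forall G\in\finite\ \big(F\subseteq G\subseteq z\to [F,G]\subseteq\A\big),
\]
is sound; the crucial step---using approximability of the complement to pass from ``all finite $G\in[F,z]$ lie in $\A$'' to ``$[F,z]\subseteq\A$''---is carried out correctly (in particular $G=F\cup F'\subseteq w$ since both $F$ and $F'$ are contained in $w$). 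This yields $C_F=\OO_F\setminus V_F$ with $V_F$ open, hence $\A\in\bsigma^0_2(\Pw)$, and symmetry gives $\A\in\bdelta^0_2(\Pw)$. Your identification of the one non-routine point is also accurate.
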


The characterization of $D_\alpha\big(\bsigma^0_1\big)$-subsets of $\scott$ is a stratification of the previous result using the notion of a 1-alternating tree.

\begin{definition}[Definition 3.5 in \cite{Selivanov2005bis}]
Let $\A\subseteq \Pw$ and $0<\alpha<\omega_1$. 
A \emph{1-alternating tree for $\A$ of rank $\alpha$} is a homomorphism of quasi-orders $${f:(T,\sqsubseteq)\rightarrow (\mathcal{P}_{<\omega}(\omega),\subseteq)}$$ from a well-founded tree $T\subseteq \omega^{<\omega}$ of rank $\alpha$ such that: 
\begin{enumerate}[nolistsep]
\item $f(\varnothing)\in \A,$ and
\item for all $s^\frown \langle n\rangle\in T$, we have $\big(f(s)\in\A\leftrightarrow f(s^\frown \langle n\rangle)\notin \A\big).$
\end{enumerate}
\end{definition}

\begin{corollary} [Corollary 3.11 in \cite{Selivanov2005bis}]\label{chardiff}
Let $\A\subseteq \Pw$ and $0<\alpha<\omega_1$.
\[
\A\in D_\alpha(\bsigma^0_1)(\scott) \Longleftrightarrow 
\begin{cases}
\begin{tabular}{c}
\text{$\A\in\bdelta^0_2(\scott)$ and}  \\ \text{there is no 1-alternating tree for $A$ of rank $\alpha$.}
\end{tabular}
\end{cases}
\]
\end{corollary}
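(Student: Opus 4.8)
The statement is an equivalence, so the plan is to prove the two implications separately. Throughout I would use two facts: the open subsets of $\scott$ are exactly the $\subseteq$-upward-closed unions of basic opens $\OO_F$, and, by Theorem \ref{chardelta02}, membership of $\A$ in $\bdelta^0_2(\scott)$ is equivalent to approximability of both $\A$ and $\scott\setminus\A$.

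For the implication from left to right, I would first record that $D_\alpha(\bsigma^0_1)(\scott)\subseteq\bdelta^0_2(\scott)$ for every countable $\alpha$: a difference of $\alpha<\w_1$ open sets is a countable Boolean combination of open sets and hence lies in $\bsigma^0_2(\scott)\cap\bpi^0_2(\scott)=\bdelta^0_2(\scott)$. To rule out a $1$-alternating tree of rank $\alpha$, I would fix a representation $\A=D_\alpha\big((A_\beta)_{\beta<\alpha}\big)$ with each $A_\beta$ open, and attach to every $x\in\scott$ its entry rank $r(x)$, the least $\beta<\alpha$ with $x\in A_\beta$ (and $r(x)=\alpha$ if no such $\beta$ exists); membership of $x$ in $\A$ is then governed by the parity of $r(x)$ relative to $\alpha$. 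Given a $1$-alternating tree $f\colon (T,\sqsubseteq)\to(\finite,\subseteq)$, upward-closedness of each $A_\beta$ together with $f(s)\subseteq f(s^\frown\langle n\rangle)$ makes $r\circ f$ non-increasing along edges, while the enforced flip of $\A$-membership across each edge forces it to \emph{strictly} decrease. A routine $\sqsupseteq$-induction then gives $\rk_T(s)\leq r\big(f(s)\big)$ for all $s\in T$, whence $\rk(T)\leq r\big(f(\varnothing)\big)<\alpha$ (as $f(\varnothing)\in\A$), contradicting $\rk(T)=\alpha$.

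For the converse I would manufacture a difference representation of $\A$ from the two hypotheses. The central device is a rank function $\rho\colon\finite\to\mathrm{Ord}$ defined by recursion along color changes, $\rho(F)=\sup\{\rho(G)+1\mid F\subseteq G\in\finite,\ \chi_\A(F)\neq\chi_\A(G)\}$, where $\chi_\A$ is the indicator of $\A$ on finite sets. Approximability of both $\A$ and $\scott\setminus\A$ ensures that no infinite $\subseteq$-increasing chain of finite sets can have perpetually alternating colors -- such a chain would converge to a point whose $\A$-membership could not be locally approximated -- so this recursion is well-founded and $\rho$ is ordinal-valued. The absence of a $1$-alternating tree of rank $\alpha$ then bounds $\rho$ by $\alpha$: a finite set $F\in\A$ with $\rho(F)\geq\alpha$ would, after truncation, yield a genuine $1$-alternating tree of rank exactly $\alpha$ rooted at $F$. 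Finally I would read off open sets $A_\beta$ from $\rho$ and the colors -- taking each $A_\beta$ to be the union of the basic opens $\OO_F$ whose rank places them at stage $\beta$, the indexing chosen according to the parity of $\alpha$ -- and verify $D_\alpha\big((A_\beta)_{\beta<\alpha}\big)=\A$. The verification at an arbitrary $x\in\scott$ would proceed by choosing, through approximability, a finite $F\subseteq x$ with $[F,x]$ monochromatic, so that the $\A$-membership of $x$ is decided by $F$ and matches the stage assigned to $F$ by the construction.

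I expect the entire difficulty to concentrate in this converse direction, in two interlocking points. The first is that the correct definition and boundedness of $\rho$ require combining approximability with the no-alternating-tree hypothesis while coping with the asymmetry built into the tree definition by the clause $f(\varnothing)\in\A$: the bound is immediate for finite sets lying in $\A$ but must be propagated to those outside $\A$. The second is that the difference of the constructed open sets must be shown to equal $\A$ on all of $\scott$, not merely on finite sets, which forces a second, careful appeal to approximability to transport the computation from finite approximations to arbitrary points.
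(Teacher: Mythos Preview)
The paper does not prove this statement: Corollary~\ref{chardiff} is quoted verbatim from Selivanov's earlier work (Corollary~3.11 in \cite{Selivanov2005bis}) and is used as a black box, so there is no proof in the paper to compare your proposal against.

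That said, your outline is the standard argument and is essentially how Selivanov proves it. The forward direction is clean and correct as written: the entry-rank $r$ is $\subseteq$-antitone because open sets in $\scott$ are upward closed, the color flip along each edge forces a strict drop, and the induction $\rk_T(s)\le r\big(f(s)\big)$ goes through. For the converse, your rank $\rho$ on $\finite$ is exactly the right invariant, and your use of approximability to show it is ordinal-valued (no infinite $\subseteq$-chain with perpetually alternating colors) is the key step. The two difficulties you flag are real but routine once named: the asymmetry in the tree definition only means that the bound $\rho(F)<\alpha$ is immediate for $F\in\A$, while for $F\notin\A$ one gets $\rho(F)\le\alpha$ (since every witness $G\supseteq F$ of opposite color lies in $\A$ and hence has $\rho(G)<\alpha$); this is enough to set up the $A_\beta$'s. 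The passage from finite sets to arbitrary $x\in\scott$ in the final verification is, as you say, a second appeal to approximability, and it works because any monochromatic interval $[F,x]$ pins down both the $\A$-membership of $x$ and the least stage at which $x$ enters the $A_\beta$'s. Your proposal would make a perfectly acceptable proof once the definition of the $A_\beta$'s is written out explicitly (e.g.\ $A_\beta=\bigcup\{\OO_F: \rho(F)\text{ and }\chi_\A(F)\text{ satisfy the parity constraint for stage }\beta\}$) and the verification is carried through.
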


\section{The class $\boldsymbol{\mathbb{P}_{\lay}}$}

We define a class -- called $\mathbb{P}_{\lay}$ -- of countable 2-colored posets (Definition \ref{defrusp}) that will be mapped into the Wadge order on the subsets of the Scott domain in the next section. The definition of $\mathbb{P}_{\lay}$ will first be independent of $\scott$. Afterwards, we will give an order theoretic characterization of the elements of $\mathbb{P}_{\lay}$ that link them to $\scott$ (Proposition \ref{embedinscott}).

We begin with the naming of several posets that are useful for the definition of a subclass of $\mathbb{P}$ denoted by $\mathbb{P}_{\shrub}$. 
In the following picture, we represent each poset $(P,\leq_p)$ with a directed graph $G=(P,\to)$. More precisely, if $p,q\in P,$ then $p\leq_p q$ if and only if there exists a finite sequence $(p_k)_{k\leq l}$ such that $p_0=p, p_l=q$ and for all $k<l,$ we have $p_k\to p_{k+1}.$

\begin{figure}[H]
\centering
\fbox{
\begin{tikzpicture}[scale=1,node distance=0.8cm and 0.8cm]
\node (a) at (0,0) {$\vdots$};
\node (b) [below of = a] {$2$};
\node (c) [below of = b] {$1$};
\node (d) [below of = c] {$0$};
\node (e) [below of = d] {$\w$};
\draw [<-] (a) -- (b);
\draw [<-] (b) -- (c);
\draw [<-] (c) -- (d);
\node (a4) at (2,0) {$\top$};
\node (b4) [below of = a4] {$\vdots$};
\node (c4) [below of = b4] {$1$};
\node (d4) [below of = c4] {$0$};
\node (e4) [below of = d4] {$\w^\top$};
\draw [<-] (a4) to [out=-40,in=40] (c4);
\draw [<-] (a4) to [out=-30,in=30] (d4);
\draw [<-] (b4) -- (c4);
\draw [<-] (c4) -- (d4);
\node (a1) at (4,0) {$0$};
\node (b1) [below of = a1] {$1$};
\node (c1) [below of = b1] {$2$};
\node (d1) [below of = c1] {$\vdots$};
\node (e1) [below of = d1] {$\w^*$};
\draw [<-] (a1) -- (b1);
\draw [<-] (b1) -- (c1);
\draw [<-] (c1) -- (d1);
\begin{scope}[yshift=-2.5cm, xshift=-8.25cm]
\node (a3) at (5.5,-2) {$0$};
\node (b3) [right of = a3] {$1$};
\node (c3) [right of = b3] {$2$};
\node at (8,-2) {$\cdots$};
\node (e3) [below of = b3] {$\bot$};
\node (f3) [below of = e3] {$\N^\bot$};
\draw [<-] (a3) -- (e3);
\draw [<-] (b3) -- (e3);
\draw [<-] (c3) -- (e3);
\node (a5) at (9.5,-2.8) {$0$};
\node (b5) [right of = a5] {$1$};
\node (c5) [right of = b5] {$2$};
\node at (12,-3) {$\cdots$};
\node (e5) [above of = b5] {$\top$};
\node (f5) [below of = b5] {$\N^\top$};
\draw [->] (a5) -- (e5);
\draw [->] (b5) -- (e5);
\draw [->] (c5) -- (e5);
\node (a2) at (14,-2) {$2$};
\node (b2) [right of = a2] {$3$};
\node (c2) [below of = a2] {$0$};
\node (d2) [right of = c2] {$1$};
\draw [<-] (a2) -- (c2);
\draw [<-] (b2) -- (c2);
\draw [<-] (a2) -- (d2);
\draw [<-] (b2) -- (d2);
\node at (14.5,-3.6) {$P_4$};
\end{scope}
\end{tikzpicture}
}
\caption{Samples of useful countable posets.} \label{Figure1}
\end{figure}

In \cite{Selivanov2005bis}, Selivanov worked with well-founded trees in order to construct subsets of $\scott.$ We will generalize this construction to a larger class of posets that we call \emph{shrubs} and that share some of the properties of well-founded trees. For this purpose, we make use of the classical notion of \emph{bounded completeness} that occurs in domain theory.

\begin{definition}
A subset $S\subseteq P$ of a poset is \emph{bounded} if there exists an element $p'\in P$ -- called an \emph{upper bound} -- such that, for any $p\in S,$ we have $p\leq_p p'.$ If the set of all upper bounds of $S\subseteq P$ has a unique $\leq_p$-minimal element -- i.e., if there exists an upper bound $s_S\in P$ of $S$ such that, for any other upper bound $p'\in P$ of $S,$ we have $s_S\leq_p p'$ -- then $s_S$ is called \emph{the supremum} of $S$ in $P.$ \\
A poset $P$ is \emph{bounded complete} if any bounded $S\subseteq P$ admits a -- necessarily unique -- supremum.
\end{definition}

Notice that $P_4$ is a typical example of a poset which is not bounded complete, while all the other examples of Figure \ref{Figure1}, as well as  $(\finite,\subseteq)$ and $(\scott,\subseteq)$ are examples of bounded complete posets.

\begin{definition} \label{defrusp}
The class of all {shrubs} $\mathbb{P}_{\shrub}\subseteq \mathbb{P}$ is the class of all countable posets $P\in \mathbb{P}$ that satisfy:
\begin{enumerate}[noitemsep]
\item $\w\notinjectivehomo P,$
\item for all $p\in P,\ \Card(\Pred(p))<\aleph_0,$
\item there exists a $\leq_p$-minimal element $\bot,$
\item $P$ is bounded complete.  
\end{enumerate}
\end{definition}

Well-founded trees and $\N^{\bot}$ are typical examples of shrubs. More involved ones will be constructed in the proof of Theorem \ref{illfounded} (Figure \ref{posetpn}) and of Theorem \ref{antichain} (Figure \ref{posetqn}). To the contrary, $\omega,$ $\omega^\top\!\!,$ $\w^*\!\!,$ $\N^\top\!\!,$ and $P_4$ are typical examples of posets that are not shrubs. 

In the next proposition, we give alternative characterizations to the second item of the previous definition. In particular, we show that the posets we just defined can be embedded into $\finite$. We also give an alternative characterization of this second item that exclusively depends on morphisms between posets.

\begin{proposition} \label{embedinscott}
If $P\in\mathbb{P},$ then the following are equivalent: 
\begin{enumerate}[noitemsep]
\item $\text{for all $p\in P,\ \Card(\Pred(p))<\aleph_0$}$,
\item $P\injectivehomo \finite$,
\item $(\omega^\top \notinjectivehomo P),\ (\omega^* \notinjectivehomo P)$ and $(\N^\top \notinjectivehomo P)$.
\end{enumerate}
\end{proposition}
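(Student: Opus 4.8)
The plan is to prove the three implications $(1)\Rightarrow(2)\Rightarrow(3)\Rightarrow(1)$, with the bulk of the work in $(1)\Rightarrow(2)$.

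\textbf{Proof of $(1)\Rightarrow(2)$.} Assume every element of $P$ has finitely many predecessors. I would build an injective homomorphism $\varphi:P\to\finite$ directly from the predecessor structure. Fix an injective enumeration $(p_k)_{k<\lambda}$ of $P$ (with $\lambda\in\omega\cup\{\omega\}$) and set $\varphi(p)=\{k\mid p_k\in\Pred(p)\}$. Since $\Pred(p)$ is finite by hypothesis, $\varphi(p)$ is a finite subset of $\omega$, so $\varphi$ maps into $\finite$. If $p\leq_p q$, then $\Pred(p)\subseteq\Pred(q)$ by transitivity of $\leq_p$, hence $\varphi(p)\subseteq\varphi(q)$, so $\varphi$ is a homomorphism. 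For injectivity, note $p\in\Pred(p)$ by reflexivity, so if $p=p_k$ then $k\in\varphi(p)$; and if $p\neq q$ then, since at least one of $p\not\leq_p q$ or $q\not\leq_p p$ fails, the index of the "offending" element lies in one of $\varphi(p),\varphi(q)$ but not the other — more carefully, if $p\not\in\Pred(q)$ then the index of $p$ is in $\varphi(p)\setminus\varphi(q)$, and symmetrically, so $\varphi(p)\neq\varphi(q)$. (This also shows $\varphi$ is actually an order-embedding, but we only need the injective homomorphism.)

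\textbf{Proof of $(2)\Rightarrow(3)$.} This is a transitivity/composition argument together with three small observations. Suppose $P\injectivehomo\finite$ via some $\psi$. If one of $\omega^\top$, $\omega^*$, $\N^\top$ admitted an injective homomorphism into $P$, then composing with $\psi$ would yield an injective homomorphism of that poset into $\finite$. So it suffices to check $\omega^\top\notinjectivehomo\finite$, $\omega^*\notinjectivehomo\finite$, and $\N^\top\notinjectivehomo\finite$. Each fails because the offending poset has an element with infinitely many predecessors: in $\omega^\top$ the top element $\top$ has all of $\{0,1,2,\dots\}$ below it; in $\omega^*$ the element $0$ sits above the infinite descending chain; in $\N^\top$ the top element again dominates infinitely many elements. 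An injective homomorphism into $\finite$ would send such an element to a finite set containing the (injective, hence infinite) images of all those predecessors — impossible. I would state this as: any $Q\injectivehomo\finite$ satisfies $\Card(\Pred(q))<\aleph_0$ for all $q$, which is immediate, and then note none of the three posets has this property.

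\textbf{Proof of $(3)\Rightarrow(1)$.} This is the contrapositive, and is the place needing the most care. Assume there is some $p\in P$ with $\Pred(p)$ infinite. I want to produce an injective homomorphism into $P$ of one of the three forbidden posets. Inside $\Pred(p)$ (an infinite poset in which $p$ is the top), apply the infinite Ramsey theorem to the restriction of $\leq_p$ to a countable subset: either there is an infinite chain or an infinite antichain in $\Pred(p)\setminus\{p\}$. If there is an infinite antichain $\{q_n\mid n<\omega\}$ below $p$, then mapping the minimum of $\N^\top$ somewhere below (using $\bot$ is not available here since $P$ need not be a shrub — but note $\N^\top$ as drawn has \emph{no} bottom, its elements $0,1,2,\dots$ are the antichain and $\top$ is above all, so send $n\mapsto q_n$ and $\top\mapsto p$), giving $\N^\top\injectivehomo P$. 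If instead there is an infinite chain $c_0<_p c_1<_p\cdots$ below $p$: if it is increasing with the $c_n$'s all strictly below $p$ and cofinal-looking, map $\omega^\top$ into $P$ by $n\mapsto c_n$, $\top\mapsto p$; if the chain is better viewed as an infinite \emph{descending} sequence $c_0>_p c_1>_p\cdots$ with everything below $p$, then $\omega^*$ (whose element $0$ is the top, then $1>$ in the reversed sense… re-reading Figure \ref{Figure1}, $\omega^*$ is the descending chain $0,1,2,\dots$ with $0$ on top) embeds via $0\mapsto$ the top of the chain (which can be taken $\leq_p p$), $n\mapsto c_n$. The combinatorial bookkeeping — matching the direction of the chain/antichain found to the right one of the three forbidden diagrams, and checking the finitely many elements above $p$ do not interfere — is the main obstacle; the underlying idea (infinitely many predecessors forces an infinite chain or antichain below a common upper bound, i.e. a copy of $\omega^\top$, $\omega^*$, or $\N^\top$) is straightforward.

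The real content of the proposition is the equivalence $(1)\Leftrightarrow(2)$, which is essentially immediate from the predecessor-set construction; the third clause is a reformulation purely in terms of morphisms and is proved by transporting the three obvious obstructions through the embedding in one direction and through a Ramsey-type dichotomy in the other. The step I expect to require the most attention is $(3)\Rightarrow(1)$: getting the directions of the chains right relative to the specific posets $\omega^\top,\omega^*,\N^\top$ as drawn in Figure \ref{Figure1}.
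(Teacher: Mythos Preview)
Your arguments for $(1)\Rightarrow(2)$ and $(2)\Rightarrow(3)$ are essentially identical to the paper's: the map $p\mapsto\{k\mid p_k\leq_p p\}$ is exactly what the paper uses, and the observation that an injective homomorphism cannot decrease the cardinality of $\Pred(\cdot)$ is the paper's argument for the second implication as well.

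For $(3)\Rightarrow(1)$ your approach is correct but genuinely different from the paper's. You invoke Ramsey on $\Pred(p)\setminus\{p\}$ to extract, in one blow, an infinite antichain (giving $\N^\top$), an infinite ascending chain (giving $\omega^\top$ with $\top\mapsto p$), or an infinite descending chain (giving $\omega^*$). The paper instead argues by hand with immediate predecessors: either some $q_0<_p p$ lies under no immediate predecessor of $p$ (then one builds an ascending chain under $p$, yielding $\omega^\top$), or $p$ has infinitely many immediate predecessors (yielding $\N^\top$), or by pigeonhole some immediate predecessor $q_0$ of $p$ again has infinite $\Pred(q_0)$, and iterating this produces the descending chain for $\omega^*$. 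Your Ramsey route is shorter and more uniform; the paper's case analysis is more elementary and avoids appealing to an external combinatorial theorem. Your write-up of the chain case is a bit muddled (you first write $c_0<_p c_1<_p\cdots$ and then say ``if it is increasing \ldots\ if it is better viewed as descending''); what you actually need is a single application of Ramsey with three colours on pairs (comparable-up, comparable-down, incomparable) to land directly in one of the three cases, which would tighten the exposition.
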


\begin{proof} ~
\begin{description}
\item[(1. $\boldsymbol{\Rightarrow}$ 2.):] 
We consider $P\in \omega\cup\{\omega\}$ and define a function:
\begin{align*}
e: P&\to \finite \\
k&\mapsto \{n\mid n\leq_p k\}.
\end{align*}
If $k\leq_p l,$ then by transitivity of $\leq_p,$ we get $e(k)\subseteq e(l)$. If $k\neq l$, we consider the two cases $k<_pl$ and $k\perp_p l$ (the third case $l<_pk$ is the same as the case $k<_pl$). In both cases, $l\in e(l)\setminus e(k).$ Therefore, we obtain that $e$ is an injective homomorphism that witnesses $P\injectivehomo \finite$.
\item[(2. $\boldsymbol{\Rightarrow}$ 3.):] If $\varphi: Q\injectivehomo P$, then for all $q\in Q,$ the injectivity of $\varphi$ implies $\Card\big(\Pred(q)\big)\leq\Card\big(\Pred(\varphi(q))\big).$
Towards a contradiction, we assume that $(\omega^\top \injectivehomo P) \lor (\omega^* \injectivehomo P)\lor (\N^\top\injectivehomo P)$ holds. We get a contradiction for each one of these situations: 
\begin{enumerate}[nolistsep]
\item if $\omega^\top \injectivehomo P$, then $\Card\big(\Pred(\varphi(\top))\big)= \aleph_0,$
\item if $\omega^* \injectivehomo P$, then $\Card\big(\Pred(\varphi(0))\big)= \aleph_0,$
\item if $\N^\top \injectivehomo P$, then $\Card\big(\Pred(\varphi(\top))\big)= \aleph_0.$
\end{enumerate}
However, there exists no $F\in \finite$ such that $\Card\big(\Pred(F)\big)= \aleph_0.$
\item[(3. $\boldsymbol{\Rightarrow}$ 1.):]
Towards a contradiction, we pick $p\in P$ such that $\Card(\Pred(p))=\aleph_0.$ We consider three different cases.
\begin{enumerate}[label=(\alph*),nolistsep]
\item Suppose there exists $q_0<_p p$ such that there exists no immediate predecessor $p'$ of $p$ satisfying $q_0\leq_p p'.$ Hence, there exists $q_1<_p p$ such that $q_0<_p q_1$. We continue the process to construct a sequence $(q_n)_{n\in\omega}$ witnessing $\omega^\top \injectivehomo P$ via the mapping: $\top\mapsto p,$ and $n\mapsto q_n$ for any $n\in\w$.
\item Suppose there exist infinitely many immediate predecessors $(q_n)_{n\in\omega}$ of $p\in P,$ then the mapping: $\top\mapsto p,$ and $n\mapsto q_n$ for any $n\in\w,$ witnesses $\N^\top\injectivehomo P$.
\item Suppose that we are not in the situations (a) and (b); then, by the pigeonhole principle, there exists $q_0$ an immediate predecessor of $p$ such that $\Card(\Pred(q_0))=\aleph_0$. If we replace $p$ with $q_0$ and start the proof again, either we get a contradiction from (a) or (b), or we exhibit $q_1$ an immediate predecessor of $q_0$ such that $\Card(\Pred(q_1))=\aleph_0.$ By an infinite iteration of this process, we obtain a sequence $(q_n)_{n\in\omega}$ witnessing $\omega^* \injectivehomo P$ via the mapping: $0\mapsto p,$ and $n\mapsto q_{n-1}$ for any $n\in\w^+.$
\end{enumerate}
\end{description}
\end{proof}

In the next section, we will associate a subset $\A_\p$ of $\scott$ to some countable 2-colored posets $\p,$ where the color $1$ will correspond to elements inside $\A_\p.$

In Figure \ref{figure2}, we give a name to some specific 2-colored posets: the nodes of the form $\bullet$ and $\circ$ correspond to color 1 and color 0, respectively.

\begin{figure}[H]
\centering
\fbox{
\begin{tikzpicture}
\node (a) at (0.2,-0.2) {$\circ$};
\node (b) at (0.8,-0.2) {$\circ$};
\node (c) at (0.5,-0.8) {$\bullet$};
\node at (-0.5,-0.5) {$\boldsymbol{\vee_1^0}:$};
\draw [->] (c) -- (a);
\draw [->] (c) -- (b);
\node (a1) at (3.2,-0.8) {$\circ$};
\node (b1) at (3.8,-0.8) {$\circ$};
\node (c1) at (3.5,-0.2) {$\bullet$};
\node (d1) at (2.5,-0.5) {$\boldsymbol{\wedge^1_0}:$};
\draw [<-] (c1) -- (a1);
\draw [<-] (c1) -- (b1);
\node (a2) at (6,-0.2) {$\bullet$};
\node (b2) at (6,-0.8) {$\bullet$};
\draw [<-] (a2) -- (b2);
\node (d2) at (5.5,-0.5) {$\boldsymbol{\mid_1^1}:$};
\end{tikzpicture}}
\caption{Samples of useful 2-colored countable posets.} \label{figure2}
\end{figure}

The next definition introduces the class of \emph{embeddable posets}. We will later associate a subset of $\scott$ to each such 2-colored poset.

\begin{definition} \label{defplay}
The class of \emph{embeddable posets} $\mathbb{P}_{\lay}$ is the class of countable 2-colored posets $\p=(P,\leq_p,\col_p)$ such that $(P,\leq_p)\in \mathbb{P}_{\shrub}$ and whose coloring satisfies:
\begin{enumerate}[noitemsep]
\item $\col_p(\bot)=0$,
\item for all $k\in P$ $\leq_p$-maximal, $\col_p(k)=1,$
\item $(\boldsymbol{\vee_1^0} \not\rightarrowtail_c \p),\ (\boldsymbol{\wedge^1_0} \not\rightarrowtail_c \p)$ and $(\ \!\boldsymbol{\mid_1^1} \not\rightarrowtail_c \p)$.
\end{enumerate}
\end{definition}

If $\p$ is an embeddable poset, then the nodes of color 1 are isolated. Indeed, if $\p\in \mathbb{P}_{\lay}$, $p\in P$ and $\col_p(p)=1,$ then $p$ has a unique immediate predecessor; and $p$ has at most one immediate successor\footnote{If $\p$ is an embeddable poset, $p\in P$ is an immediate successor of $p'\in P$ if $p'\in\Pred_{\imm}(p).$}, depending on whether $p$ is $\leq_p$-maximal or not. Moreover,  if they exist, they both have color 0. Thus, we introduce the following notations.

\begin{notation}\label{p-p+}
For $\p\in \mathbb{P}_{\lay}$, $p\in P$ and $\col_p(p)=1,$ we denote by $p^-$ its unique immediate predecessor; and, if it exists, by $p^+$ its unique immediate successor. We have $\col_p(p^-)=\col_p(p^+)=0$.
\end{notation}

This means that the direct neighborhood -- composed of all immediate predecessors and all immediate successors -- of every node of color 1 is of one of the following form, depending on whether it is $\leq_p$-maximal or not:

\begin{figure}[H]
\centering
\fbox{
\begin{tikzpicture}
\node (a1) at (3,-0.7) {$\bullet$};
\node at (2.5,-0.7) {$p$};
\node (b1) at (3,-1.3) {$\circ$};
\node at (2.5,-1.3) {$p^-$};
\draw [<-] (a1) -- (b1);
\node (a2) at (6,-0.4) {$\circ$};
\node (b2) at (6,-1) {$\bullet$};
\node (c2) at (6,-1.6) {$\circ$};
\node at (5.5,-0.4) {$p^+$};
\node at (5.5,-1) {$p$};
\node at (5.5,-1.6) {$p^-$};
\draw [<-] (a2) -- (b2);
\draw [<-] (b2) -- (c2);
\end{tikzpicture}
}
\caption{The two possible direct neighborhoods of any $p\in P,$ where $\p\in \mathbb{P}_{\lay}$ and $\col_p(p)=1.$ The first case occurs when $p$ is $\leq_p$-maximal, and the second one when $p$ is not.}
\end{figure}

\section{An order-embedding into the Wadge order}

In this section, we associate a subset $\A_\p\in \bdelta^0_2(\scott)$ to each countable 2-colored poset $\p$ which is embeddable, and show that this association is such that, for any $\p,\q\in \mathbb{P}_{\lay},$ $\p\preccurlyeq_c \q$ if and only if $\A_\p\leq_w \A_\q$ (Lemma \ref{lemmainjectivehomomorphism}). As a consequence, we get our main result that there exists an order-embedding $\sfrac{({\mathbb{P}_{\reg}},\preccurlyeq_c)}{\equiv_c} \to \sfrac{(\bdelta^0_2(\Pw),\leq_w)}{\equiv_w}$ (Theorem \ref{injectivehomomorphism}).

We first need to label the elements of any embeddable poset.

\begin{definition} \label{labeling}
Let $\p\in \mathbb{P}_{\reg}$ so that $P\in \w\cup\{\w\}$ has a $\leq_p$-minimal element $m=\bot$ for some $m\in \w$. The labeling $l_p$ on $P$ is defined by:
\begin{align*}
l_p: P &\to \PP_{<\omega}(\omega) \\
\bot&\mapsto \emptyset,\\
n&\mapsto \bigcup_{\substack{ k\leq_p n}} \{k\}.
\end{align*}
\end{definition}
We notice that $l_p$ is injective. Therefore, for every $F\in\finite$ in the range of $l_p,$ $l_p^{-1}(F)$ is well-defined.

We then associate a subset of the Scott domain to any embeddable poset through the labeling given by Definition \ref{labeling}.

\begin{definition} \label{association}
Let $\p\in \mathbb{P}_{\reg},$ we define the subset $\A_\p\subseteq \scott$ as:
\begin{align*}
\A_\p&= l_p\big[\col^{-1}_p[\{1\}]\big]
\\ &= \big\{x\subseteq \omega \mid \exists p\in P\ (\col_p(p)=1\land l_p(p)=x)\big\}.
\end{align*}
We also denote by $\C(\A_\p)$ the set of all finite sets of integers contained in the labeling of an element of $P$:
$$\C(\A_\p)=\big\{F\subseteq \omega \mid \exists p\in P\ F\subseteq l_p(p)\big\}.$$ 
\end{definition}

The next lemma gathers two crucial observations that arise from the construction given by Definition \ref{association}.

\begin{lemma}\label{observations}
Let $\p\in \mathbb{P}_{\lay}$ and $F\in \finite$.
\begin{enumerate}[nolistsep]
\item If $F\in \C(\A_\p),$ then $\{p\in P \mid l_p(p)\subseteq F\}$ has an upper bound in $\p$. \\
By Definition $\ref{defrusp}$, it has a unique supremum denoted by $s_F\in P$.
\item $F\in \A_\p \Leftrightarrow \big(\col_p(s_F)=1\land l_p(s_F)=F\big).$
\end{enumerate}
\end{lemma}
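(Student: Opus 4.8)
The two items are really about transferring order-theoretic facts about $P$ (via the labeling $l_p$) into statements about finite subsets of $\omega$. I would prove item~1 first, since item~2 is then a short computation relying on~1 and on the definition of $\A_\p$.

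\emph{Item 1.} Suppose $F\in\C(\A_\p)$, so there is $p_0\in P$ with $F\subseteq l_p(p_0)$. Let $S=\{p\in P \mid l_p(p)\subseteq F\}$. First I would observe that $S$ is nonempty, because $\bot\in S$: by Definition~\ref{labeling}, $l_p(\bot)=\varnothing\subseteq F$. The key point is that $S$ is \emph{bounded} in $P$, for which I claim $p_0$ itself is an upper bound. Indeed, take any $q\in S$, so $l_p(q)\subseteq F\subseteq l_p(p_0)$. By the explicit formula $l_p(n)=\{k \mid k\leq_p n\}=\Pred(n)$ (identifying elements of $P$ with the integers naming them), the inclusion $l_p(q)\subseteq l_p(p_0)$ says $\Pred(q)\subseteq\Pred(p_0)$; since $q\in\Pred(q)$, we get $q\leq_p p_0$. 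Hence $p_0$ bounds $S$. Now $(P,\leq_p)=(P,\leq_p)\in\mathbb{P}_{\shrub}$ is bounded complete by Definition~\ref{defrusp}(4), so the bounded set $S$ admits a unique supremum $s_F\in P$, as desired. (One should note for later use that, by construction, $s_F$ is the $\leq_p$-largest element whose predecessor-set is contained in $F$; in particular $l_p(s_F)\subseteq F$ and $l_p(p)\subseteq l_p(s_F)$ for every $p\in S$.)

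\emph{Item 2.} For the direction $(\Leftarrow)$: if $\col_p(s_F)=1$ and $l_p(s_F)=F$, then $F=l_p(s_F)$ with $s_F\in\col_p^{-1}[\{1\}]$, so $F\in l_p[\col_p^{-1}[\{1\}]]=\A_\p$ directly from Definition~\ref{association}. For $(\Rightarrow)$: suppose $F\in\A_\p$. Then $F=l_p(p)$ for some $p\in P$ with $\col_p(p)=1$. Since $l_p(p)=F\subseteq F$, we have $p\in S$, so $p\leq_p s_F$, whence $F=l_p(p)\subseteq l_p(s_F)$. On the other hand $l_p(s_F)\subseteq F$ by item~1 (the supremum lies in $S$, as $S$ is closed downward and bounded-complete suprema of such sets stay in $S$ — more directly: $l_p(s_F)=\Pred(s_F)$ and every predecessor $r<_p s_F$ satisfies $l_p(r)\subseteq l_p(p)\cup\dots$; cleanest is to argue $l_p(s_F)\subseteq F$ from $s_F$ being the sup of elements whose labels are $\subseteq F$, together with the fact that $l_p$ applied to the sup equals the union of the labels, which I'd need to check). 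Combining, $l_p(s_F)=F$, and then $p\leq_p s_F$ together with $l_p(p)=l_p(s_F)$ forces $p=s_F$ by injectivity of $l_p$, so $\col_p(s_F)=\col_p(p)=1$.

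\textbf{Main obstacle.} The one genuinely delicate point is showing $l_p(s_F)\subseteq F$ in item~1, equivalently that $l_p\big(\sup S\big)=\bigcup_{p\in S} l_p(p)$ — i.e.\ that $l_p$ (identified with $\Pred(\cdot)$) is continuous/preserves these suprema. This is where Definition~\ref{defrusp}(2) (finite predecessor sets, equivalently $P\injectivehomo\finite$ via $e$ from Proposition~\ref{embedinscott}) and bounded completeness must be used together: via the embedding $e=l_p$ into $(\finite,\subseteq)$, the supremum of $S$ in $P$ must map to the $\subseteq$-least finite set above all $l_p(p)$, $p\in S$, which is $\bigcup_{p\in S} l_p(p)$; and this union is a subset of $F$. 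I would spell out exactly this: $l_p[P]\subseteq\finite$ is the image under an order-embedding, $\{l_p(p)\mid p\in S\}$ is bounded above in $\finite$ by $F$, its union $\bigcup_{p\in S}l_p(p)\subseteq F$ is itself finite hence the supremum of $\{l_p(p)\mid p\in S\}$ inside $l_p[P]$ whenever it lies in $l_p[P]$ — and it does, being $l_p(s_F)$ by uniqueness of suprema transported across the embedding. Everything else is bookkeeping with the definitions.
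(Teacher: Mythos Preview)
Your proof of item~1 is correct and matches the paper's. The gap is in item~2$(\Rightarrow)$, precisely at the ``main obstacle'' you flag.

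The assertion you try to establish --- that $l_p(s_F)\subseteq F$ for every $F\in\C(\A_\p)$, equivalently that $l_p(\sup S)=\bigcup_{q\in S}l_p(q)$ --- is \emph{false} in general. Take $P=\{0,1,2,3,4\}$ with $0=\bot$, $0<_p 1<_p 3<_p 4$, $0<_p 2<_p 3$, $1\perp_p 2$, and coloring $\col_p(4)=1$, all other nodes colored $0$; one checks this lies in $\mathbb{P}_{\lay}$. Then $l_p(1)=\{0,1\}$, $l_p(2)=\{0,2\}$, $l_p(3)=\{0,1,2,3\}$. For $F=\{0,1,2\}\subseteq l_p(3)$ we have $F\in\C(\A_\p)$, $S=\{0,1,2\}$, and $s_F=\sup\{1,2\}=3$, yet $l_p(s_F)=\{0,1,2,3\}\not\subseteq F$. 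Your transport-of-suprema argument breaks exactly here: the union $\bigcup_{q\in S}l_p(q)=\{0,1,2\}$ is the supremum in $\finite$ but does \emph{not} lie in $l_p[P]$, so you cannot conclude it equals $l_p(s_F)$. The sentence ``and it does, being $l_p(s_F)$'' is circular.

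The fix is much simpler and uses the extra hypothesis in item~2 that you never exploit: you are not dealing with an arbitrary $F\in\C(\A_\p)$, but with $F=l_p(p_0)$ lying in the range of $l_p$. For any $q\in S$ with $q\neq\bot$, the inclusion $l_p(q)\subseteq F=l_p(p_0)$ reads $\Pred(q)\subseteq\Pred(p_0)$, hence $q\in\Pred(p_0)$, i.e.\ $q\leq_p p_0$; and $\bot\leq_p p_0$ trivially. So $p_0$ is itself an upper bound of $S$ while also lying in $S$, whence $p_0=\max S=s_F$ directly --- no sup-preservation needed. This order-reflection of $l_p$ is exactly what the paper already uses implicitly in item~1 when it declares the witness $p_0$ an upper bound of $S$; in item~2 the paper then packages the conclusion as $s_F\in S$, giving $F=l_p(p_0)\subseteq l_p(s_F)\subseteq F$ and hence $p_0=s_F$ by injectivity of $l_p$.
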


\begin{proof}
\begin{enumerate}
\item Since $F\in \C(\A_{\p})=\big\{F\subseteq \omega \mid \exists p\in P\ F\subseteq l_p(p)\big\},$ there exists $p_0\in P$ such that $F\subseteq l_p(p_0).$ Thus, $p_0\in P$ is an upper bound of $\{p\in P \mid l_p(p)\subseteq F\}$.
\item Assume first that $F\in \A_\p\subseteq \C(\A_\p).$ Then, there exists $p_0\in P$ such that $\col_p(p_0)=1$ and $l_p(p_0)=F.$ It implies that $p_0\leq_p s_F.$ Since $s_F$ is the supremum of $\{p\in P \mid l_p(p)\subseteq F\}$ and $s_F$ has a unique immediate predecessor, we have $s_F\in \{p\in P \mid l_p(p)\subseteq F\}.$ Thus $F=l_p(p_0)\subseteq l_p(s_F)\subseteq F.$ By injectivity of $l_p,$ we obtain $s_F=p_0$ and $\col_p(s_F)=1$. \\
Conversely, from the very definition of $\A_\p$, we have $\col_p(s_F)=1$ and $l_p(s_F)=F$, which implies that $F\in \A_\p$.
\end{enumerate}
\end{proof}

The rest of this section consists in proving that the correspondence $\p\mapsto \A_\p$ satisfies that $\A_\p\in\bdelta^0_2(\scott)$ and for any $\p,\q\in \mathbb{P}_{\lay},$ $\p\preccurlyeq_c \q$ if and only if $\A_\p\leq_w \A_\q$. For this, we need a result which claims that a continuous mapping from $\scott$ to itself is completely determined by its behavior on $\mathcal{P}_{<\omega}(\w)$.

\begin{lemma}[Exercice 5.1.62 in \cite{Goubaultlarrecq2013}] \label{goubault}
Given any $\subseteq$-increasing mapping $f:\mathcal{P}_{<\omega}(\omega)\to \Pw,$ there exists a \emph{unique} continuous extension of $f$ to the whole Scott domain. This extension is given by 
\begin{align*}
\hat{f}: \Pw&\to \Pw \\
x&\mapsto \bigcup_{n\in\omega} f\big(x\cap n\big).
\end{align*}
\end{lemma}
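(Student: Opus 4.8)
The plan is to prove the lemma in two parts: first that $\hat f$ as defined is indeed a continuous function extending $f$ (existence), and then that any continuous $g\colon \Pw\to\Pw$ with $g\restr\finite = f$ must coincide with $\hat f$ (uniqueness). The single computation driving both parts is the description of the $\hat f$-preimage of a basic open set: since each $\OO_G$ is generated by the \emph{finite} set $G$, and since the family $\big(f(x\cap n)\big)_{n\in\w}$ is $\subseteq$-increasing by monotonicity of $f$, one has $G\subseteq\bigcup_n f(x\cap n)$ if and only if $G\subseteq f(x\cap n)$ for some single $n$, if and only if $G\subseteq f(F)$ for some finite $F\subseteq x$. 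This converts the ``infinitary'' union defining $\hat f$ into a ``finitary'' membership condition that is manifestly open.

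For existence, I would first record that $(x\cap n)_{n\in\w}$ is a $\subseteq$-increasing sequence of finite sets with union $x$, so $\big(f(x\cap n)\big)_n$ is $\subseteq$-increasing and $\hat f(x)$ is well defined. That $\hat f$ extends $f$ is immediate: for finite $F$ one has $F\cap n\subseteq F$, hence $f(F\cap n)\subseteq f(F)$ for all $n$, and $F\cap n = F$ for $n$ large, so $\hat f(F) = f(F)$. Continuity then follows from the identity
\[
\hat f^{-1}[\OO_G] \;=\; \bigcup\big\{\OO_F \;\big|\; F\in\finite,\ G\subseteq f(F)\big\},
\]
valid for every finite $G$: if $\hat f(x)\in\OO_G$, pick $n$ with $G\subseteq f(x\cap n)$ and set $F:=x\cap n$, so $x\in\OO_F$ and $G\subseteq f(F)$; conversely if $x\in\OO_F$ and $G\subseteq f(F)$, then $F\subseteq x\cap n$ for $n$ large, so by monotonicity $G\subseteq f(F)\subseteq f(x\cap n)\subseteq\hat f(x)$. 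As a union of basic opens the right-hand side is open, hence $\hat f$ is continuous.

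For uniqueness, let $g$ be continuous with $g\restr\finite = f$. I would first show that $g$ is $\subseteq$-monotone: if $x\subseteq y$ and some $k\in g(x)\setminus g(y)$, then $g^{-1}[\OO_{\{k\}}]$ is an open set containing $x$, hence contains some $\OO_F$ with $F\subseteq x\subseteq y$; but then $y\in\OO_F$ forces $k\in g(y)$, a contradiction. Given monotonicity, $g(x\cap n)\subseteq g(x)$ for all $n$, so $\hat f(x) = \bigcup_n f(x\cap n) = \bigcup_n g(x\cap n)\subseteq g(x)$. For the reverse inclusion, if $k\in g(x)$ then again $g^{-1}[\OO_{\{k\}}]$ contains some $\OO_F$ with $F\subseteq x$; choosing $n$ with $F\subseteq\{0,\dots,n-1\}$ gives $x\cap n\in\OO_F$, hence $k\in g(x\cap n) = f(x\cap n)\subseteq\hat f(x)$. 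Thus $g(x) = \hat f(x)$ for all $x$. The only mildly delicate point — which I would flag as the crux — is precisely the interplay exploited throughout: the Scott-open sets $\OO_G$ are determined by finite data and are upward closed, so the monotonicity of $f$ (respectively, the forced monotonicity of $g$) is exactly what turns a statement about the directed union $\bigcup_n f(x\cap n)$ into a statement about a single finite approximant $x\cap n$.
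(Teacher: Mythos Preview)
Your proof is correct and follows essentially the same approach as the paper's: both use the finiteness of $G$ to reduce $G\subseteq\bigcup_n f(x\cap n)$ to $G\subseteq f(x\cap n_0)$ for a single $n_0$, and both prove uniqueness by first deducing $\subseteq$-monotonicity of any continuous $g$ from the upward closure of Scott opens, then using continuity at $\OO_{\{k\}}$ to trap each $k\in g(x)$ in some $f(x\cap n)$. Your version is a bit more explicit (you verify $\hat f\restr\finite=f$ and write the preimage $\hat f^{-1}[\OO_G]$ as an explicit union of basic opens), but there is no substantive difference in strategy.
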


\begin{proof} ~
\begin{description}
\item[Existence:]
It suffices to prove that $\hat{f}$ is continuous. Observe that, for all $x\in\Pw,$ the sequence $\big(f(x\cap n)\big)_{n\in\omega}$ is $\subseteq$-increasing. Let $F\in\mathcal{P}_{<\omega}(\omega)$ such that $\OO_F$ is a basic open set. If $x\in \hat{f}^{-1}\big[\OO_F\big],$ then $F\subseteq \hat{f}(x).$ Since $F$ is finite, there exists $n_0\in\omega$ such that $F\subseteq f\big(x\cap n_0\big)$. We obtain $$x\in \OO_{x\cap n_0}\subseteq \hat{f}^{-1}\big[\OO_F\big],$$ which shows that $\hat{f}$ is continuous.

\item[Uniqueness:]
Observe that a continuous function has to be $\subseteq$-increasing on the whole domain. This follows from the $\subseteq$-upward closure of the open subsets of $\scott$. Let $g:\Pw\to\Pw$ be any continuous extension of $f.$ Given any $x\in \Pw,$ consider $k\in g(x).$ By continuity, there exists $F\in \mathcal{P}_{<\omega}(\omega)$ such that $x\in \OO_F\subseteq g^{-1}\big[\OO_{\{k\}}\big]$. Because $F$ is finite, there exists $n_0\in\omega$ such that $x\cap n_0\in \OO_F.$ Thus, $k\in g\big(x\cap n_0\big)=f\big(x\cap n_0\big)\subseteq \hat{f}(x).$ The exact same reasoning works if $g$ and $\hat{f}$ are swapped. Hence, we conclude that for all $x\in\Pw$, we have $\hat{f}(x)=g(x).$
\end{description}
\end{proof} 

We are now ready for our main proof.

\begin{lemma}\label{lemmainjectivehomomorphism}
The following mapping
\begin{align*}
H:({\mathbb{P}_{\reg}},\preccurlyeq_c) &\to (\bdelta^0_2(\Pw),\leq_w)\\ \p&\mapsto \A_\p
\end{align*}
satisfies that for any $\p,\q \in {\mathbb{P}_{\reg}}$, we have $$\p\preccurlyeq_c\q \text{ if and only if } \A_\p\leq_w\A_\q.$$
\end{lemma}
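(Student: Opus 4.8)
The plan is to prove the two implications separately, each time invoking Lemma~\ref{goubault} to replace continuous maps $\Pw\to\Pw$ by $\subseteq$-increasing maps $\finite\to\Pw$.

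\emph{Assume $\p\preccurlyeq_c\q$, say via a color-preserving homomorphism $\varphi\colon\p\to\q$.} For $F\in\finite$ put $D_F=\{p\in P\mid l_p(p)\subseteq F\}$; it is finite and $\leq_p$-downward closed, and when it is bounded it has a supremum $s_F$ (Definition~\ref{defrusp}, Lemma~\ref{observations}). I would define $g\colon\finite\to\Pw$ by three clauses: $g(F)=l_q(\varphi(s_F))$ in the ``genuine'' case ($D_F$ bounded and either $\col_p(s_F)=0$ or $F=l_p(s_F)$); $g(F)=l_q(\varphi(s_F^{+}))$ in the ``overshoot'' case ($D_F$ bounded, $\col_p(s_F)=1$, $F\neq l_p(s_F)$) when $s_F$ is not $\leq_p$-maximal; and $g(F)=\w$ otherwise (i.e. $D_F$ unbounded, or $s_F$ a $\leq_p$-maximal node of color~$1$ with $F\neq l_p(s_F)$). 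After checking that $g$ is $\subseteq$-increasing — a routine case analysis on $F\subseteq F'$, using $s_F\leq_p s_{F'}$, the uniqueness of $s_F^{+}$ as an immediate successor, and the upward propagation of maximality and unboundedness — one sets $f=\hat g$ and verifies $f^{-1}[\A_\q]=\A_\p$. If $x=l_p(p)$ with $\col_p(p)=1$, then $\hat g(x)=g(l_p(p))=l_q(\varphi(p))$, which lies in $\A_\q$ because $\varphi$ preserves colors. If $x$ is not such a set, one shows $\hat g(x)\notin\A_\q$: for finite $x$ this is immediate from the three clauses, since none of a color-$0$ label, an $l_q(\varphi(s_F^{+}))$ with $\col_q(\varphi(s_F^{+}))=0$, or $\w$ is a color-$1$ $Q$-label; the one delicate case is an infinite $x$ with $\hat g(x)$ a color-$1$ label $l_q(q')$, so that $g(x\cap n)=l_q(q')$ for all large $n$ — then, since an overshoot value is never a color-$1$ label, $x\cap n=l_p(s_{x\cap n})$ with $\col_p(s_{x\cap n})=1$ for all large $n$, making $(s_{x\cap n})_n$ an infinite $\leq_p$-strictly increasing sequence, contradicting $\w\notinjectivehomo P$.

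\emph{Conversely, assume $\A_\p\leq_w\A_\q$ via $f$; by Lemma~\ref{goubault}, $f=\hat g$ with $g=f\restr\finite$ $\subseteq$-increasing.} I would first extract the behaviour of $\hat g$ on the color-$1$ nodes: for $\col_p(p)=1$ we have $l_p(p)\in\A_\p$, hence $\hat g(l_p(p))\in\A_\q$ is a color-$1$ label $l_q(\psi(p))$ for a unique $\psi(p)\in Q$; and for color-$1$ $p<_pp'$ monotonicity gives $l_q(\psi(p))\subseteq l_q(\psi(p'))$ with equality impossible, since the intermediate color-$0$ label $l_p(p^{+})$ would then satisfy $\hat g(l_p(p^{+}))=l_q(\psi(p))\in\A_\q$, forcing $\col_p(p^{+})=1$ against $\boldsymbol{\mid_1^1}\not\rightarrowtail_c\p$. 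Thus $\psi$ is color-preserving and order-preserving, strictly so on chains. I would then extend $\psi$ to all of $P$. For any $p\in P$, since $\w\notinjectivehomo P$ every element lies below a $\leq_p$-maximal — hence color-$1$ — node $p^{*}$, so $W_p=\{q\in Q\mid l_q(q)\subseteq\hat g(l_p(p))\}$ is bounded (by $\psi(p^{*})$, as $\hat g(l_p(p))\subseteq\hat g(l_p(p^{*}))=l_q(\psi(p^{*}))$), hence has a supremum $\sigma_p$ in $\q$. Set $\varphi(p)=\sigma_p$ if $\col_q(\sigma_p)=0$, and $\varphi(p)=\sigma_p^{+}$ (the unique immediate successor of $\sigma_p$, which has color $0$ by $\boldsymbol{\mid_1^1}\not\rightarrowtail_c\q$) if $\col_q(\sigma_p)=1$.

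\emph{The main obstacle} will be to show this $\varphi$ is a well-defined, color-preserving, order-preserving homomorphism; this comes down to two facts about a color-$0$ node $p$ (and the dual, easy, statements about color-$1$ nodes). First, if $\sigma_p$ has color $1$ then it is not $\leq_q$-maximal and $\hat g(l_p(p))\supsetneq l_q(\sigma_p)$, so $\sigma_p^{+}$ exists and $\varphi(p)$ has color~$0$: indeed $l_p(p)\notin\A_\p$ forces $\hat g(l_p(p))\neq l_q(\sigma_p)$, hence $\supsetneq$, and for a maximal $p^{*}\geq_p p$ one gets $l_q(\sigma_p)\subsetneq\hat g(l_p(p))\subseteq l_q(\psi(p^{*}))$, so $\sigma_p<_q\psi(p^{*})$. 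Second, $\sigma_p$ never equals $\psi(p')$ for a color-$1$ $p'\geq_p p$: since a color-$1$ node has a unique immediate predecessor, the argument of Lemma~\ref{observations} puts $\sigma_p\in W_p$ whenever $\col_q(\sigma_p)=1$, whence $l_q(\sigma_p)\subseteq\hat g(l_p(p))\subseteq\hat g(l_p(p'))=l_q(\psi(p'))$, and $\sigma_p=\psi(p')$ would give $\hat g(l_p(p))=l_q(\psi(p'))\in\A_\q$, i.e. $\col_p(p)=1$, a contradiction. Granting these, monotonicity of $\varphi$ follows from that of $\hat g$ (using that $\sigma_{p_1}<_q\psi(p_2)$ forces $\sigma_{p_1}^{+}\leq_q\psi(p_2)$), color-preservation is immediate by construction, and $\varphi$ witnesses $\p\preccurlyeq_c\q$. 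The two phenomena these facts exclude — an honest-looking reduction that ``overshoots'' a maximal color-$1$ node, or that collapses an interval of $\A_\p$ containing a color-$0$ gap onto a single point of $\A_\q$ — are exactly the ones ruled out by the defining axioms of $\mathbb{P}_{\lay}$: $\w\notinjectivehomo P$ together with the isolated structure of color-$1$ nodes coming from $\boldsymbol{\vee_1^0},\boldsymbol{\wedge^1_0},\boldsymbol{\mid_1^1}\not\rightarrowtail_c\p$.
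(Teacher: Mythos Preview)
Your proof is correct and follows the paper's approach closely: both directions pass through Lemma~\ref{goubault} and the same supremum construction (your $s_F$ and $\sigma_p$ are exactly the paper's $s_F$ and $t_p$), followed by the same kind of case analysis. Two small points deserve mention. First, you do not verify $\A_\p\in\bdelta^0_2(\scott)$; the paper handles this as a separate claim (Claim~\ref{claim23}, via approximability of $\A_\p$ and its complement), and it is needed for $H$ to land in the stated codomain. Second, in the backward direction your rule $\varphi(p)\in\{\sigma_p,\sigma_p^{+}\}$ is literally stated ``for any $p\in P$'', but for a colour-$1$ node $p$ one has $\sigma_p=\psi(p)$ of colour~$1$, so your rule would output $\sigma_p^{+}$ of colour~$0$; you clearly intend $\varphi=\psi$ on colour-$1$ nodes and the $\sigma$-rule only on colour-$0$ nodes, and this should be said explicitly. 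The one substantive variation from the paper is that for a colour-$0$ $p$ with $\col_q(\sigma_p)=1$ you set $\varphi(p)=\sigma_p^{+}$, whereas the paper sets $\varphi(p)=t_p^{-}$. Both choices work, but yours requires the two extra ``facts'' you isolate (non-maximality of $\sigma_p$, and $\sigma_p\neq\psi(p')$ for colour-$1$ $p'\geq_p p$) to push order-preservation through, while the paper's downward choice makes $\varphi(p)\leq_q\varphi(p')$ follow more directly from $\sigma_p\leq_q\sigma_{p'}$.
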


\begin{proof}

The proof is divided into the three Claims \ref{claim23}, \ref{claim24} and \ref{claim25}. The first two claims show that $H$ is well-defined and order-preserving, while the third one completes the proof.

\begin{claim} \label{claim23}
If $\p\in \mathbb{P}_{\reg},$ then $\A_\p\in \bdelta^0_2(\scott).$
\end{claim}

\begin{claimproof}
We show that $\A_\p$ is both approximable and co-approximable. $\A_\p$ is approximable because $\A_\p\subseteq \PP_{<\w}(\w)$. For co-approximability, we proceed by contradiction and suppose that $\A_\p$ is not co-approximable for some $x\in\scott \setminus \A_\p$ infinite. So, we fix $F_0\in [\emptyset,x]\cap \A_\p$ and set $p_0=l_p^{-1}(F_0).$ Assume $F_n$ and $p_n$ are already constructed. Since $\A_\p$ is not co-approximable, there exists $F_{n+1}\in \big([F_n,x]\setminus\{F_n\}\big)\cap \A_\p.$ We set $p_{n+1}=l_p^{-1}(F_{n+1}).$ It follows that the function
\begin{align*}
\varphi:\w &\to P \\ n&\mapsto p_n
\end{align*}
witnesses $\omega\injectivehomo P,$ a contradiction.
\end{claimproof}

\begin{claim} \label{claim24}
If $\p,\q\in \mathbb{P}_{\reg}$ and $\p\preccurlyeq_c \q$, then $\A_\p\leq_w \A_\q.$
\end{claim}

\begin{claimproof}
Suppose that $\p\preccurlyeq_c \q$ is witnessed by $\varphi: P\to Q.$ Consider the function:
\begin{align*}
f_\varphi : \PP_{<\omega}(\omega)&\to \Pw \\
F&\mapsto 
\begin{cases} 
l_q\big(\varphi(s_F)\big) & \text{ if $F\in \C(\A_\p)\land \col_p(s_F)=0,$} \\ 
l_q\big(\varphi(s_F)\big) & \text{ if $F\in \C(\A_\p)\land \col_p(s_F)=1 \land F=l_p(s_F),$} \\ 
l_q\big(\varphi(s_F^-)\big) & \text{ if $F\in \C(\A_\p)\land \col_p(s_F)=1 \land F\subsetneq l_p(s_F),$} \\ 
l_q\big(\varphi(s_F^+)\big) & \text{ if $F\in \C(\A_\p)\land \col_p(s_F)=1 \land F\nsubseteq l_p(s_F),$} \\ 
\omega &\text{ otherwise,}
\end{cases}
\end{align*}
where $s_F$ is defined as in Lemma \ref{observations}; $s_F^-$ and $s_F^-$ are defined as in Notation \ref{p-p+}; and $s^+_F$ is replaced by $\omega$ whenever $s_F$ is a maximal element in $(P,\leq_p).$

We show that the function $\hat{f}_\varphi$ given by Lemma \ref{goubault} satisfies $\hat{f}_\varphi^{-1}\big[\A_\q\big]=\A_\p.$ First, for $\hat{f}_\varphi$ to exist, we need $f_\varphi$ to be increasing. Let $F,G \in \PP_{<\omega}(\omega)$ be such that $F\subseteq G.$ We have several cases to check:
\begin{enumerate}[noitemsep]
\item if $G\notin \C(\A_\p),$ then $f_\varphi(F)\subseteq f_\varphi(G)=\omega.$
\end{enumerate}
Since $G\in \C(\A_\p)$ implies $F\in \C(\A_\p),$ we now suppose $F,G\in \C(\A_\p)$ and thus $s_F\leq_p s_G.$
\begin{enumerate}[noitemsep,resume]
\item if $\col_p(s_F)=\col_p(s_G)=0,$ then $f_\varphi(F)=l_q\big(\varphi(s_F)\big)\subseteq l_q\big(\varphi(s_G)\big)=f_\varphi(G),$

\item if $\col_p(s_F)=0$ and $\col_p(s_G)=1,$ then $f_\varphi(F)=l_q\big(\varphi(s_F)\big)\subseteq l_q\big(\varphi(s_G^-)\big)\subseteq f_\varphi(G),$

\item if $\col_p(s_F)=1$ and $\col_p(s_G)=0,$ then $f_\varphi(F)\subseteq l_q\big(\varphi(s_F^+)\big)\subseteq l_q\big(\varphi(s_G)\big)=f_\varphi(G),$

\item if $\col_p(s_F)=\col_p(s_G)=1$ and $s_F\neq s_G,$ then there exists $p\in P$ such that $s_F<_p p <_p s_G$ holds, because there exist no two consecutive nodes colored by $1$. Therefore $f_\varphi(F)\subseteq l_q\big(\varphi(s_F^+)\big)\subseteq l_q\big(\varphi(s_G^-)\big)=f_\varphi(G).$
\end{enumerate}
It only remains to consider the cases where $\col_p(s_F)=\col_p(s_G)=1,$ and $s_F= s_G$:
\begin{enumerate}[noitemsep,resume]
\item if $F,G\in \A_\p,$ then $f_\varphi(F)=l_q\big(\varphi(s_F)\big)=l_q\big(\varphi(s_G)\big)=f_\varphi(G),$

\item if $F\in \A_\p$ and $G\notin \A_\p,$ then $f_\varphi(F)=l_q\big(\varphi(s_F)\big)\subseteq l_q\big(\varphi(s_F^+)\big)=f_\varphi(G),$

\item if $F\notin \A_\p$ and $G\in \A_\p,$ then $f_\varphi(F)=l_q\big(\varphi(s_F^-)\big)\subseteq l_q\big(\varphi(s_F)\big)=f_\varphi(G),$

\item if $F,G\notin \A_\p$ and $F\subsetneq l_p(s_F),$ then $f_\varphi(F)=l_q\big(\varphi(s_F^-)\big)\subseteq f_\varphi(G),$

\item if $F,G\notin \A_\p$ and $F\nsubseteq l_p(s_F),$ then $f_\varphi(F)=l_q\big(\varphi(s_F^+)\big)=f_\varphi(G).$
\end{enumerate}
This finishes the proof that $f_\varphi:\PP_{<\omega}(\omega)\to \Pw$ is increasing. It follows from Lemma \ref{goubault}, that $f_\varphi$ has a continuous extension $\hat{f}_\varphi:\Pw\to \Pw$. We distinguish between three different cases to show that $\hat{f}_\varphi^{-1}\big[\A_\q\big]=\A_\p$.

\begin{description}
\item[$\boldsymbol{x\in \PP_{\omega}(\omega):}$] because $\A_\p\subseteq \PP_{<\omega}(\omega),$ we have $x\notin \A_\p.$ Suppose, towards a contradiction, that $\hat{f}_\varphi(x)\in \A_\q.$ Since $\A_\q\subseteq \PP_{<\omega}(\omega),$ there exist $F\in \PP_{<\omega}(\omega)$ and $n\in\omega,$ such that $\hat{f}_\varphi(x)=F\in \A_\q$ and $f_\varphi\big(x\cap m\big)=F$ both hold for all $m\geq n.$ We then notice that
\begin{align*}
f_\varphi(G)\in \A_\q&\Rightarrow G\in \C(\A_\p)\land \col_p(s_G)=1 \land G=l_p(s_G) \\
&\Rightarrow G\in \A_\p.
\end{align*}
Where the first implication comes from the definition of $f_\varphi$ and the second from Lemma \ref{observations}.
We obtain that $x\cap m\in \A_\p$ holds for all $m\geq n$, this implies $\col_p\big(l_p^{-1}(x\cap m)\big)=1$, and since $x$ is infinite, we can extract a subsequence of $\big(l_p^{-1}(x\cap m)\big)_{m\in\omega}$ witnessing $\omega\injectivehomo P$, a contradiction.

\item[$\boldsymbol{F\in \PP_{<\omega}(\omega) \setminus \C(\A_\p):}$] $F\notin \A_\p$ holds by the very definition of $\C(\A_\p)$. Hence, we have $\omega=f_\varphi(F)= \hat{f}_\varphi (F) \notin \A_\q.$

\item[$\boldsymbol{F\in \C(\A_\p):}$] Suppose first that $F\in \A_\p.$ By Lemma \ref{observations}, $\hat{f}_\varphi(F)=l_q\big(\varphi(s_F)\big)$ is satisfied. Moreover, from $\col_q\big(\varphi(s_F)\big)=1$, we get $\hat{f}_\varphi(F)\in\A_\q.$ \\ Suppose now that $F\notin \A_\p.$ By Lemma \ref{observations}, there are three cases:
\begin{enumerate}[nolistsep]
\item if $\col_p(s_F)=0,$ then $\col_q\big(\varphi(s_F)\big)=0$ which implies $\hat{f}_\varphi(F)\notin\A_\q,$
\item if $\col_p(s_F)=1$ and $F\subsetneq l_p(s_F),$ then $\col_q\big(\varphi(s_F^-)\big)=0$ which implies $\hat{f}_\varphi(F)\notin\A_\q,$
\item if $\col_p(s_F)=1$ and $F\nsubseteq l_p(s_F),$ then $\col_q\big(\varphi(s_F^+)\big)=0$ which implies $\hat{f}_\varphi(F)\notin\A_\q$.
\end{enumerate}
\end{description}
\end{claimproof}

\begin{claim} \label{claim25}
If $\p,\q\in \mathbb{P}_{\reg}$ and $\A_\p\leq_w \A_\q$, then $\p\preccurlyeq_c \q.$
\end{claim}

\begin{claimproof}
We assume that $\A_\p\leq_w \A_\q$ is witnessed by some continuous function $f:\Pw\to \Pw$. We describe a reduction which witnesses $\p\preccurlyeq_c \q$. First, we need a few observations. Let $p\in P.$ Since $\omega\notinjectivehomo P$ and all $\leq_p$-maximal elements have color 1, there exists $p'\in P$ such that both $p\leq_p p'$ and $\col_p(p')=1$ hold. Therefore, $f\big(l_p(p')\big)\in \A_\q.$ Hence, for all $p\in P$, we have $f\big(l_p(p)\big)\in \C(\A_\q).$ We also define, for all $p\in P$, the set $$Q_p=\big\{q\in Q \mid l_q(q)\subseteq f\big(l_p(p)\big)\big\}.$$ Since $f\big(l_p(p)\big)\in \C(\A_\q)$ holds, Lemma \ref{observations} yields the existence of a unique supremum $t_p$ of $Q_p$ in $Q.$ \\
We define a mapping:
\begin{align*}
\varphi : P &\to Q \\
p&\mapsto 
\begin{cases}  
t_p & \text{ if $f\big(l_p(p)\big)\in \A_\q,$} \\
t_p & \text{ if $f\big(l_p(p)\big)\notin \A_\q \land \col_q(t_p)=0,$} \\
t_p^- & \text{ if $f\big(l_p(p)\big)\notin \A_\q \land \col_q(t_p)=1 \land l_q(t_p)\subsetneq f\big(l_p(p)\big),$} \\
t_p^+ & \text{ if $f\big(l_p(p)\big)\notin \A_\q \land \col_q(t_p)=1 \land l_q(t_p)\nsubseteq f\big(l_p(p)\big),$} 
\end{cases}
\end{align*}
where $t_p^-$ and $t_p^+$ are defined as in Notation \ref{p-p+}.

For $\varphi$ to be well-defined, we need $t^+_p$ not to occur whenever $t_p$ is a $\leq_q$-maximal element. So, suppose $t_p$ is a $\leq_q$-maximal element. Since $\col_q(t_p)=1$, then $t_p\in Q_p$ for it has a unique immediate predecessor. Thus, $l_q(t_p)\subseteq f\big(l_p(p)\big)$ holds, which shows that $t^+_p$ does not occur in this case. \\
Since for every $p\in P$ we have
$$\col_p(p)=1 \Leftrightarrow l_p(p)\in \A_\p \Leftrightarrow f\big(l_p(p)\big)\in \A_\q,$$ it follows from the definition of $\varphi,$ that for all $p\in P$ we also have $\col_p(p)=\col_q(\varphi(p)).$ Therefore, it only remains to show that $\varphi$ is order-preserving. Suppose $p\leq_p p',$ we get $t_p\leq_q t_{p'}.$ We proceed with cases:
\begin{enumerate}[noitemsep]
\item if $\col_q(t_p)=\col_q(t_{p'})=0,$ then $\varphi(p)=t_p\leq_q t_{p'}=\varphi(p'),$

\item if $\col_q(t_p)=0$ and $\col_q(t_{p'})=1,$ then $\varphi(p)=t_p\leq_q t_{p'}^-\leq_q \varphi(p'),$

\item if $\col_q(t_p)=1$ and $\col_q(t_{p'})=0,$ then $\varphi(p)\leq_q t_p^+\leq_q t_{p'}=\varphi(p'),$

\item if $\col_q(t_p)=\col_q(t_{p'})=1$ and $t_p\neq t_{p'},$ then there exists some $q\in Q$ that satisfies $t_p<_q q <_q t_{p'}$. This finally leads to $\varphi(p)\leq_q t_p^+\leq_q t_{p'}^-=\varphi(p').$
\end{enumerate}
It only remains to consider the cases where $\col_q(t_p)=\col_q(t_{p'})=1,$ and $t_p= t_{p'}$:
\begin{enumerate}[noitemsep,resume]
\item if $\col_p(p)=\col_p(p')=1,$ then $\varphi(p)=t_p=t_{p'}=\varphi(p'),$

\item if $\col_p(p)=1$ and $\col_p(p')=0,$ then $\varphi(p)=t_p\leq_q t_p^+=\varphi(p'),$

\item if $\col_p(p)=0$ and $\col_p(p')=1,$ then $\varphi(p)=t_p^-\leq_q t_p=\varphi(p'),$

\item if $\col_p(p)=\col_p(p')=0$ and $l_q(t_p)\subsetneq f\big(l_p(p)\big),$ then $\varphi(p)=t_p^-\leq_q \varphi(p'),$

\item if $\col_p(p)=\col_p(p')=0$ and $l_q(t_p)\nsubseteq f\big(l_p(p)\big),$ then $\varphi(p)=t_p^+=\varphi(p').$
\end{enumerate}
This concludes the proof that $\varphi$ witnesses $\p\preccurlyeq_c \q$.
\end{claimproof}

So, Claim \ref{claim23} proves that the mapping $H:\p\mapsto \A_\p$ is a well-defined mapping from $({\mathbb{P}_{\reg}},\preccurlyeq_c)$ to $(\bdelta^0_2(\Pw),\leq_w)$, and we conclude from the Claims \ref{claim24} and \ref{claim25} that for any $\p,\q\in {\mathbb{P}_{\reg}}$, $\p\preccurlyeq_c\q$ if and only if $\A_\p\leq_w\A_\q$.
\end{proof}

The previous lemma almost immediately yields the main result:

\begin{theorem}\label{injectivehomomorphism}
The following mapping is an order-embedding:
\begin{align*}
\sfrac{({\mathbb{P}_{\reg}},\preccurlyeq_c)}{\equiv_c} &\to \sfrac{(\bdelta^0_2(\Pw),\leq_w)}{\equiv_w}\\ [\p]&\mapsto [\A_\p].
\end{align*}
\end{theorem}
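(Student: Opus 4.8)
The plan is to derive Theorem \ref{injectivehomomorphism} directly from Lemma \ref{lemmainjectivehomomorphism} by a routine passage to quotients. Lemma \ref{lemmainjectivehomomorphism} establishes that the map $H:\p\mapsto \A_\p$ from $({\mathbb{P}_{\reg}},\preccurlyeq_c)$ to $(\bdelta^0_2(\Pw),\leq_w)$ satisfies $\p\preccurlyeq_c\q \iff \A_\p\leq_w\A_\q$; what remains is to check that this equivalence is precisely what is needed for the induced map on equivalence classes to be a well-defined order-embedding of posets.

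Concretely, I would proceed in three short steps. First, \textbf{well-definedness.} If $[\p]=[\q]$, i.e.\ $\p\equiv_c\q$, then $\p\preccurlyeq_c\q$ and $\q\preccurlyeq_c\p$, so by Lemma \ref{lemmainjectivehomomorphism} we get $\A_\p\leq_w\A_\q$ and $\A_\q\leq_w\A_\p$, hence $\A_\p\equiv_w\A_\q$, i.e.\ $[\A_\p]=[\A_\q]$. Thus $[\p]\mapsto[\A_\p]$ is a well-defined function $\sfrac{({\mathbb{P}_{\reg}},\preccurlyeq_c)}{\equiv_c}\to\sfrac{(\bdelta^0_2(\Pw),\leq_w)}{\equiv_w}$. (One also needs $\A_\p\in\bdelta^0_2(\Pw)$ for every $\p\in\mathbb{P}_{\reg}$, which is exactly Claim \ref{claim23}, so the codomain is correct.) Second, \textbf{order-embedding.} For $\p,\q\in\mathbb{P}_{\reg}$, the equivalences furnished by Lemma \ref{lemmainjectivehomomorphism} and the definitions of the quotient orders give
\[
[\p]\leq_c[\q]\ \Longleftrightarrow\ \p\preccurlyeq_c\q\ \Longleftrightarrow\ \A_\p\leq_w\A_\q\ \Longleftrightarrow\ [\A_\p]\leq_w[\A_\q],
\]
which is the defining property of an order-embedding between the two posets. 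Third, as noted in the paper, order-embeddings are automatically injective, so nothing further is required; in any case injectivity follows directly: if $[\A_\p]=[\A_\q]$ then $\A_\p\leq_w\A_\q$ and $\A_\q\leq_w\A_\p$, whence $\p\preccurlyeq_c\q$ and $\q\preccurlyeq_c\p$ by Lemma \ref{lemmainjectivehomomorphism}, i.e.\ $[\p]=[\q]$.

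There is essentially no obstacle here: all the substance is in Lemma \ref{lemmainjectivehomomorphism} (and within it, Claim \ref{claim25}, the construction of a colored-poset homomorphism out of a Wadge reduction, is the genuinely hard part). The only mild care needed is bookkeeping: one must use that $\preccurlyeq_c$ and $\leq_w$ are quasi-orders and that the quotient orders $\leq_c$ on $\sfrac{\mathbb{P}_{\reg}}{\equiv_c}$ and $\leq_w$ on $\sfrac{\bdelta^0_2(\Pw)}{\equiv_w}$ are the ones induced representative-wise, as set up in Section 2; then the chain of equivalences above is immediate and the theorem follows.
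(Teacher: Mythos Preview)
Your proposal is correct and follows essentially the same approach as the paper: both derive the theorem directly from Lemma \ref{lemmainjectivehomomorphism} by passing to quotients, checking that the biconditional $\p\preccurlyeq_c\q\iff\A_\p\leq_w\A_\q$ yields both the order-preserving-and-reflecting property and injectivity of the induced map on equivalence classes. Your version is slightly more explicit about well-definedness, but the argument is the same.
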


\begin{proof}
By Lemma \ref{lemmainjectivehomomorphism} and the definition of the order on quotient sets, it is clear that for any $[\p],[\q]\in \sfrac{({\mathbb{P}_{\reg}},\preccurlyeq_c)}{\equiv_c}$, we have $[\p]\preccurlyeq_c[\q]$ if and only if $[\A_\p]\leq_w[\A_\q]$. Moreover, if $[\A_\p]=[\A_\q],$ then $\A_\p\equiv_w\A_\q$, and by Lemma \ref{lemmainjectivehomomorphism}, we have $\p\equiv_c \q$, hence $[\p]=[\q]$. Thus, the mapping $[\p]\mapsto [\A_\p]$ is an order-embedding.
\end{proof}

\section{A reduction game on $\boldsymbol{\mathbb{P}}$} \label{sectionduparc}

This section introduces a game characterization of reductions on 2-colored posets. This characterization and the order-embedding given in Theorem \ref{injectivehomomorphism} are the essential tools that we need in order to study the Wadge order on the Scott domain. 

This game comes as a standard two-player infinite game where the players choose elements of $\mathbb{P}$.

\begin{definition}\label{posetgame}
Let $\p,\q\in \mathbb{P}.$ The game $G_{\mathbb{P}}(\p,\q)$ is defined as a two-player ($\I$ and $\II$) game played on $\omega$ rounds. Each round $n\in\omega$ is played as follows: first $\I$ picks an element $p_n\in P$ and then $\II$ picks an element $q_n\in Q.$ We further require that there exists $n_0\in\omega$ such that, for all $n\geq n_0,$ $p_n=p_{n_0}.$

We say that $\II$ \emph{wins the game} if and only if the two following conditions are satisfied:
\begin{enumerate}[nolistsep]
\item $p_n\leq_p p_m\to q_n\leq_q q_m$ holds for all $n,m\in\omega$,
\item $\col_p(p_n)=\col_q(q_n)$ for all $n\in\omega$.
\end{enumerate}

Schematically, the game goes as follows:

\begin{figure}[H]
\centering
\fbox{
\begin{tikzpicture}[yscale=0.75]
\draw (0,1) node {$\I$};
\draw (0,0) node {$\II$};
\draw (0.5,-0.5) -- (0.5,1.5);
\draw (1,1) node {$p_0$};
\draw (1.5,0) node {$q_0$};
\draw (2,1) node {$p_1$};
\draw (2.5,0) node {$q_1$};
\draw (3,1) node {$\cdots$};
\draw (3.5,0) node {$\cdots$};
\draw (4,1) node {$p_{n_0}$};
\draw (4.5,0) node {$q_{n_0}$};
\draw (5,1) node {$p_{n_0}$};
\draw (5.5,0) node {$q_{n_0+1}$};
\draw (6,1) node {$\cdots$};
\draw (6.5,0) node {$\cdots$};
\draw (7,1) node {$p_{n_0}$};
\draw (7.5,0) node {$q_k$};
\draw (8,1) node {$\cdots$};
\draw (8.5,0) node {$\cdots$};
\draw [->] (1.1,0.75) -- (1.4,0.25);
\draw [->] (2.1,0.75) -- (2.4,0.25);
\draw [->] (4.1,0.75) -- (4.4,0.25);
\draw [->] (5.1,0.75) -- (5.4,0.25);
\draw [->] (7.1,0.75) -- (7.4,0.25);
\draw [->] (1.6,0.25) -- (1.9,0.75);
\draw [->] (4.6,0.25) -- (4.9,0.75);
\end{tikzpicture}
}
\caption{The game $G_\mathbb{P}(P,Q)$ for $\p,\q\in \mathbb{P}$.}
\end{figure}

A \emph{run} of the game is a sequence $(p_0,q_0,p_1,q_1,\dots)\in (P\cup Q)^\omega.$
\end{definition}

In plain English, player $\I$ moves inside the 2-colored poset $\p$, whereas player $\II$ moves inside the 2-colored poset $\q$. The goal of $\II$ is to reproduce (order-wise and color-wise) in $\q$ the run that $\I$ is producing in $\p.$ Notice that the condition of playing ultimately constant for player $\I$ is equivalent to requiring that the game stops after finitely many rounds.

Related to this game, we introduce the notion of an \emph{ultrapositional strategy} as a strengthening of the usual notion of a strategy.

\begin{definition}
Let $\p,\q\in\mathbb{P}$. An ultrapositional strategy for player $\II$ in the game $G_{\mathbb{P}}(\p,\q)$ is a function $\tau:P\to Q.$
\end{definition}

Contrary to the usual strategies that rely on the history of the opponent's run, ultrapositional strategies only take into account the last move of the opponent. An ultrapositional strategy is winning if it ensures a win whatever the opponent does.

Ultrapositional strategies characterize the reductions inside $\mathbb{P}$ as shown by the next proposition.

\begin{proposition} \label{propcharposet}
Let $\p,\q\in \mathbb{P}.$ 
$$\p\preccurlyeq_c \q \Longleftrightarrow \text{ $\II$ has an ultrapositional winning strategy in $G_{\mathbb{P}}(\p,\q).$}$$
\end{proposition}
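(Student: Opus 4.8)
The plan is to prove the equivalence directly in both directions, with the core observation being that an ultrapositional strategy is by definition a function $\tau:P\to Q$, which is the same type of object as a homomorphism of $2$-colored posets. So the two directions essentially amount to checking that ``winning ultrapositional strategy'' and ``homomorphism of $2$-colored posets'' describe the same functions, modulo a small amount of care about the order condition.

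For the direction $(\Rightarrow)$, suppose $\varphi:P\to Q$ witnesses $\p\preccurlyeq_c\q$, i.e.\ $\varphi$ is order-preserving and color-preserving. I claim $\tau=\varphi$, viewed as an ultrapositional strategy, is winning. Indeed, in any run $(p_0,q_0,p_1,q_1,\dots)$ where $\II$ follows $\tau$, we have $q_n=\varphi(p_n)$ for all $n$. Then for any $n,m$, if $p_n\leq_p p_m$, then $\varphi(p_n)\leq_q\varphi(p_m)$, i.e.\ $q_n\leq_q q_m$, so the first winning condition holds (the implication in that condition is exactly order-preservation applied pointwise). The second winning condition, $\col_p(p_n)=\col_q(q_n)=\col_q(\varphi(p_n))$, is exactly the color-preservation of $\varphi$. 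Hence $\tau$ is winning regardless of $\I$'s moves (and in particular regardless of the ultimately-constant requirement, which only restricts $\I$).

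For the direction $(\Leftarrow)$, suppose $\tau:P\to Q$ is a winning ultrapositional strategy for $\II$. I claim $\varphi=\tau$ witnesses $\p\preccurlyeq_c\q$. To see color-preservation, fix $p\in P$ and consider the run in which $\I$ plays $p_n=p$ for all $n$ (this is a legal run, being ultimately constant); then $\II$'s reply is $q_n=\tau(p)$, and since $\tau$ wins, the second condition gives $\col_p(p)=\col_q(\tau(p))$. To see order-preservation, fix $p_0,p_1\in P$ with $p_0\leq_p p_1$ and consider the run where $\I$ plays $p_0$ at round $0$ and then $p_1$ at every round $n\geq 1$; this run is ultimately constant, hence legal, and $\II$ following $\tau$ replies $\tau(p_0)$ at round $0$ and $\tau(p_1)$ thereafter. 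Applying the first winning condition with $n=0,m=1$ and using $p_0\leq_p p_1$ yields $\tau(p_0)\leq_q\tau(p_1)$. Therefore $\varphi=\tau$ is an order- and color-preserving map, i.e.\ a homomorphism of $2$-colored posets, so $\p\preccurlyeq_c\q$.

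I do not expect a genuine obstacle here; the argument is a routine unwinding of definitions. The only point requiring a moment's attention is to confirm that the specific runs invoked in the $(\Leftarrow)$ direction (constant, and ``one step then constant'') satisfy $\I$'s legality constraint of being ultimately constant — they do — and that the first winning condition is stated as an implication about \emph{all} pairs $n,m$ rather than just $n\leq m$ in round-order, so that probing with a single ``step'' run suffices to extract order-preservation of $\tau$ on the single comparable pair $p_0\leq_p p_1$. Since $\p\preccurlyeq_c\q$ requires comparing arbitrary comparable pairs, one such run per pair (or equivalently, the observation that the winning condition must hold across all rounds of every legal run) delivers exactly what is needed.
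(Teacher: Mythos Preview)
Your proof is correct and follows essentially the same approach as the paper's own proof, which simply observes that a homomorphism and a winning ultrapositional strategy are the same function $P\to Q$ satisfying the same conditions. Your version is actually more careful than the paper's: you explicitly construct the constant and one-step-then-constant runs needed to extract order- and color-preservation of $\tau$ in the $(\Leftarrow)$ direction, whereas the paper's proof leaves this unwinding to the reader.
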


\begin{proof}
First, suppose that $\p\preccurlyeq_c \q$ holds and is witnessed by $\varphi:P\to Q$. Observe that $\varphi$ is also an ultrapositional strategy. From the very definition of a homomorphism between 2-colored posets, it respects the two conditions to be winning for $\II$ in $G_{\mathbb{P}}(\p,\q).$

Conversely, an ultrapositional winning strategy for $\II$ in $G_{\mathbb{P}}(\p,\q)$ is a homomorphism $\varphi:P\to Q$ for it respects the two winning conditions.
\end{proof}

We obtain a reduction between 2-colored posets and their subposets that are closed under the predecessor relation.

\begin{definition}
Let $(Q,\leq_q)$ be a poset. A subposet $(P,\leq_p)$ is an \emph{ideal} of $(Q,\leq_q)$ if, for all $p\in P,$ we have $\{q\in Q:q\leq_q p\}\subseteq P.$
\end{definition}

\begin{proposition}\label{inducedprefixsubposet}
Let $\p,\q\in \mathbb{P}.$
\begin{center}
If $\p$ is an ideal of $\q,$ then $\p\preccurlyeq_c \q.$
\end{center}
\end{proposition}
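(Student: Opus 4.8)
The plan is to apply Proposition~\ref{propcharposet}: it suffices to produce an ultrapositional winning strategy for player $\II$ in the game $G_{\mathbb{P}}(\p,\q)$ whenever $\p$ is an ideal of $\q$. Since $P\subseteq Q$ as sets, with $\leq_p$ the restriction of $\leq_q$ and $\col_p$ the restriction of $\col_q$, the natural candidate is simply the inclusion map $\iota:P\to Q$, $p\mapsto p$, regarded as an ultrapositional strategy for $\II$.

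First I would check that $\iota$ is a legal strategy, i.e.\ that it always produces an element of $Q$; this is immediate since $P\subseteq Q$. Next I would verify the two winning conditions of Definition~\ref{posetgame}. For the color condition, $\col_q(\iota(p))=\col_q(p)=\col_p(p)$ because $\col_p$ is the restriction of $\col_q$ to $P$. For the order condition, if in some run $\I$ plays $p_n,p_m\in P$ with $p_n\leq_p p_m$, then, since $\leq_p$ is the restriction of $\leq_q$, we have $p_n\leq_q p_m$, that is $\iota(p_n)\leq_q\iota(p_m)$; and conversely if $p_n\not\leq_p p_m$ then $p_n\not\leq_q p_m$ for the same reason, so the implication $p_n\leq_p p_m\to q_n\leq_q q_m$ is respected (in fact it is an equivalence here). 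Hence $\iota$ is an ultrapositional winning strategy for $\II$, and Proposition~\ref{propcharposet} yields $\p\preccurlyeq_c\q$.

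I should remark that this argument does not actually use the hypothesis that $\p$ is an \emph{ideal} rather than an arbitrary subposet with the restricted order and coloring: the inclusion map is a homomorphism of $2$-colored posets in either case, so $\p\preccurlyeq_c\q$ holds already when $\p$ is any full subposet of $\q$. The ideal hypothesis is the natural one to state because it is what guarantees that $\p$ inherits the structural features relevant elsewhere (e.g.\ membership in $\mathbb{P}_{\shrub}$ or $\mathbb{P}_{\lay}$), and because ideals are exactly the subposets one obtains by restricting attention to $\Pred$-closed sets of nodes; but the reduction itself is just the inclusion.

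There is essentially no obstacle here: the only thing to be careful about is the direction of the implication in the first winning condition of Definition~\ref{posetgame} — it is stated as a one-way implication $p_n\leq_p p_m\to q_n\leq_q q_m$, and since $\leq_p$ is literally the restriction of $\leq_q$ to $P\times P$, both that implication and its converse hold for $q_n=\iota(p_n)$, $q_m=\iota(p_m)$. Alternatively, one may bypass the game entirely and observe directly that the inclusion $\iota:P\to Q$ is, by the above two checks, a homomorphism of $2$-colored posets, which is the definition of $\p\preccurlyeq_c\q$; invoking Proposition~\ref{propcharposet} is then only a matter of taste.
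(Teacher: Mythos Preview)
Your proof is correct and follows exactly the same approach as the paper: the inclusion map $\iota:P\to Q$ is taken as an ultrapositional winning strategy for $\II$ in $G_{\mathbb{P}}(\p,\q)$, and Proposition~\ref{propcharposet} is invoked. Your additional observation that the ideal hypothesis is not actually needed for the reduction itself is correct, though the paper does not comment on this.
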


\begin{proof}
The inclusion $i: P\to Q,\ p\mapsto p$ witnessing that $(P,\leq_p)$ is an ideal of $(Q,\leq_q)$ is an ultrapositional winning strategy for $\II$ in $G_{\mathbb{P}}(\p,\q)$.
\end{proof}

\subsection{On the reduction game on $\boldsymbol{\mathbb{P}_{\fin}}$}

In order to simplify some later proofs, we conclude this section with some necessary conditions for an ultrapositional strategy to be winning in a subclass of the embeddable posets. 

\begin{definition}
A finite branching poset is an embeddable poset $\p\in\mathbb{P}_{\lay}$ such that every element $p\in P$ which is not $\leq_p$-minimal has finitely many successors, i.e., for all $p\in P,$ if $p\neq \bot,$ then: $$\Card\big(\Succ(p)\big)=\Card\big(\{p'\in P \mid p\leq_p p'\}\big)<\aleph_0.$$ The class of all finite branching posets is denoted by $\mathbb{P}_{\fin}$.
\end{definition}

It turns out that the image of a finitely branching poset via the order-embedding of Theorem \ref{injectivehomomorphism} must be topologically reasonably simple, for we have:

\begin{proposition}\label{orderembedding}
If $\p\in \mathbb{P}_{\fin}$, then $\A_\p\in D_\w(\bsigma^0_1)(\scott).$
\end{proposition}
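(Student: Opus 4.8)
The plan is to use the characterization of $D_\omega(\bsigma^0_1)$-subsets of $\scott$ given by Corollary \ref{chardiff}: since Claim \ref{claim23} already establishes $\A_\p \in \bdelta^0_2(\scott)$, it suffices to show that there is no $1$-alternating tree for $\A_\p$ of rank $\omega$. I will argue by contraposition: assuming a $1$-alternating tree $f\colon (T,\sqsubseteq)\to (\finite,\subseteq)$ for $\A_\p$ of rank $\omega$ exists, I will extract from it an infinite object inside $\p$ contradicting $\w\notinjectivehomo P$ (or one of the other forbidden configurations of Definition \ref{defrusp}, via Proposition \ref{embedinscott}), using the hypothesis $\p\in\mathbb{P}_{\fin}$ in an essential way.

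First I would record the relevant translation between the tree $f$ and the poset $\p$. Since a $1$-alternating tree maps into $\finite$ with $f(\varnothing)\in\A_\p$, and $\A_\p = l_p[\col_p^{-1}[\{1\}]]\subseteq\finite$, every $f(s)$ with $s$ at even tree-level lies in $\A_\p$, hence is exactly $l_p(p_s)$ for a $\leq_p$-maximal-or-otherwise node $p_s$ with $\col_p(p_s)=1$; for $s$ at odd level, $f(s)\notin\A_\p$ but $f(s)\in\C(\A_\p)$ (as it sits between two members of $\A_\p$ along the branch, and $\C(\A_\p)$ is downward closed under $\subseteq$), so by Lemma \ref{observations} it has a well-defined supremum $s_{f(s)}\in P$. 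Thus each node of $T$ gives rise to an element of $P$, and the homomorphism condition $s\sqsubseteq t\Rightarrow f(s)\subseteq f(t)$ together with Lemma \ref{observations}(1) yields monotonicity of this assignment along branches. The alternation condition forces, along any branch of $T$, the associated sequence of elements of $P$ to be \emph{strictly} increasing (between a color-$1$ node and the next color-$1$ node below it in $T$, the labels strictly grow, so the suprema strictly grow), and the colors to alternate, which matches the pattern $\circ,\bullet,\circ,\bullet,\dots$ around color-$1$ nodes described after Notation \ref{p-p+}.

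Now I would exploit that $\rk(T)=\omega$. A well-founded tree of rank $\omega$ is infinite but has no infinite branch; however, for each $n$ it contains a branch of finite length $\geq n$, hence (by the previous paragraph) $\p$ contains, for each $n$, a strictly $\leq_p$-increasing chain of length $\geq n$. The point of the hypothesis $\p\in\mathbb{P}_{\fin}$ is that every non-minimal element has only finitely many successors and (being a shrub) finitely many predecessors, so by a König-type argument the union of all these finite chains through a fixed element either stays within a finite region — impossible, since the chains get arbitrarily long — or produces an infinite strictly increasing sequence, i.e.\ a witness of $\w\injectivehomo P$, contradicting Definition \ref{defrusp}(1). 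I would make this precise by fixing $\bot$, noting all chains start (below) at $\bot$, and iterating: among the infinitely many chains some cofinal sub-collection passes through a common immediate successor of $\bot$ (finite branching, pigeonhole), then through a common successor of that, and so on; the diagonal sequence of these common nodes is an infinite $\leq_p$-chain.

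The main obstacle I anticipate is the bookkeeping in the previous step: one must be careful that the "chains in $\p$" extracted from branches of $T$ genuinely have unbounded length (the rank-$\omega$ hypothesis gives branches of unbounded length in $T$, but one should check the $P$-chain length is comparable, not collapsed by the $s\mapsto s_{f(s)}$ assignment), and the König argument needs finite branching applied to the right order (successors in $\p$, not predecessors) — which is exactly the extra assumption $\mathbb{P}_{\fin}$ buys over mere $\mathbb{P}_{\lay}$, and explains why the statement is restricted to finite branching posets. A secondary point to handle cleanly is the base/boundary behavior at color-$0$ versus color-$1$ nodes (the $p^-,p^+$ of Notation \ref{p-p+}), ensuring the strict increase of the $P$-sequence really does hold across each alternation step and not merely $\leq_p$; this is where the forbidden pattern $\boldsymbol{\mid_1^1}\not\rightarrowtail_c\p$ from Definition \ref{defplay} gets used, since it guarantees no two consecutive color-$1$ nodes, so between successive color-$1$ images there is genuine room in $P$.
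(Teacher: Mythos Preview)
Your overall strategy matches the paper's: invoke Corollary~\ref{chardiff}, note $\A_\p\in\bdelta^0_2(\scott)$, assume a 1-alternating tree $f:T\to\finite$ of rank $\omega$ exists, and derive a contradiction with the axioms of $\mathbb{P}_{\fin}$. The translation you describe (even-level nodes of $T$ land in $\A_\p$, hence give color-$1$ elements of $P$) is exactly what the paper uses.

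There is, however, a genuine slip in your K\"onig step, and fixing it collapses your argument into the paper's much shorter one. You propose to root the pigeonhole iteration at $\bot$ and use ``finite branching'' there. But the definition of $\mathbb{P}_{\fin}$ explicitly exempts $\bot$: only non-minimal elements are required to have finitely many successors, and $\bot$ may well have infinitely many (indeed it does in the posets $\p_n,\q_n$ of Figures~\ref{posetpn} and~\ref{posetqn}). So the first pigeonhole step at $\bot$ fails as written. The natural repair is to root at $p_0:=l_p^{-1}\big(f(\varnothing)\big)$ instead, since every chain you extract from $T$ genuinely starts there (you already observed $f(\varnothing)\in\A_\p$, so $\col_p(p_0)=1$, hence $p_0\neq\bot$).

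Once you make that repair, K\"onig becomes superfluous. Recall that in this paper $\Succ(p)=\{p'\in P\mid p\leq_p p'\}$ is the \emph{full} upward cone, not just the immediate successors. Arbitrarily long strictly $\leq_p$-increasing chains starting at $p_0$ already force $\Card\big(\Succ(p_0)\big)=\aleph_0$ directly, with no compactness argument needed. By the definition of $\mathbb{P}_{\fin}$ this forces $p_0=\bot$, contradicting $\col_p(p_0)=1\neq 0=\col_p(\bot)$. This is precisely the paper's proof: it never aims for $\omega\injectivehomo P$, and the forbidden-pattern discussion around $\boldsymbol{\mid_1^1}$ and the $p^-,p^+$ bookkeeping you flag as obstacles are not needed at all.
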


\begin{proof}
We use the characterization of Corollary \ref{chardiff}. Since $\p\in \mathbb{P}_{\reg}$ holds, Lemma \ref{lemmainjectivehomomorphism} implies that $\A_{\p}\in \bdelta^0_2(\scott)$ holds as well. Towards a contradiction, assume that $\A_{\p}$ admits a 1-alternating tree of rank $\omega$, namely: $$f:T_\omega\to \PP_{<\omega}(\omega).$$ This implies that, for every $k\in\w,$ there exists a strictly $\subseteq$-increasing sequence $(F^k_m)_{m<k}$ such that $F^k_0=f(\varnothing)$ and $F^k_m\in\A_{\p}$ both hold for all $m<k.$ Thus, the sequence $\Big(l^{-1}_{p}\big(F^k_m\big)\Big)_{l<k}$ is a strictly $\leq_{p}$-increasing sequence of size $k$ that satisfies $$\col_{p}\Big(l^{-1}_{p}\big(F^k_0\big)\Big)=\col_{p}\Big(l^{-1}_{p}\big(f(\varnothing)\big)\Big)=1,$$ for every $k\in\w.$ Therefore, we obtain $$\Card\bigg(\Succ\Big(l^{-1}_{p}\big(f(\varnothing)\big)\Big)\bigg)=\aleph_0.$$ By definition of a finite branching poset, this implies $l^{-1}_{p}\big(f(\varnothing)\big)=\bot,$ a contradiction for $\col_p(\bot)=0.$
\end{proof}

As a corollary, we obtain a somehow more detailed picture of Theorem \ref{injectivehomomorphism}.

\begin{corollary}
The following mapping is an order-embedding:
\begin{align*}
H:\sfrac{({\mathbb{P}_{\fin}},\preccurlyeq_c)}{\equiv_c} &\to \sfrac{(D_\w(\bsigma^0_1)(\Pw),\leq_w)}{\equiv_w}\\ [\p]&\mapsto [\A_\p].
\end{align*}

\end{corollary}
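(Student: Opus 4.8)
The plan is to assemble the corollary from ingredients that are already in place: Theorem~\ref{injectivehomomorphism}, Lemma~\ref{lemmainjectivehomomorphism}, and Proposition~\ref{orderembedding}. First I would observe that $\mathbb{P}_{\fin}\subseteq \mathbb{P}_{\lay}$, so the restriction of the map $H:\p\mapsto\A_\p$ to $\mathbb{P}_{\fin}$ still makes sense, and by Lemma~\ref{lemmainjectivehomomorphism} it still satisfies the biconditional $\p\preccurlyeq_c\q \Leftrightarrow \A_\p\leq_w\A_\q$ for $\p,\q\in\mathbb{P}_{\fin}$, since that lemma holds for all members of $\mathbb{P}_{\lay}$. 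The only genuinely new point to check is that the codomain can be taken to be $D_\w(\bsigma^0_1)(\Pw)$ rather than $\bdelta^0_2(\Pw)$; but this is exactly Proposition~\ref{orderembedding}, which tells us $\A_\p\in D_\w(\bsigma^0_1)(\scott)$ whenever $\p\in\mathbb{P}_{\fin}$. Since $D_\w(\bsigma^0_1)(\Pw)$ is closed under continuous preimages, $\leq_w$ restricts to a well-defined quasi-order on it, so $(D_\w(\bsigma^0_1)(\Pw),\leq_w)$ and its quotient by $\equiv_w$ are legitimate targets.

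Next I would pass to the quotients exactly as in the proof of Theorem~\ref{injectivehomomorphism}. For $[\p],[\q]\in\sfrac{(\mathbb{P}_{\fin},\preccurlyeq_c)}{\equiv_c}$ the induced map $[\p]\mapsto[\A_\p]$ is well-defined because $\p\equiv_c\q$ implies $\A_\p\equiv_w\A_\q$ by Lemma~\ref{lemmainjectivehomomorphism}. It is an order-homomorphism in both directions: $[\p]\preccurlyeq_c[\q]$ iff $\p\preccurlyeq_c\q$ iff $\A_\p\leq_w\A_\q$ iff $[\A_\p]\leq_w[\A_\q]$. Finally it is injective: if $[\A_\p]=[\A_\q]$ then $\A_\p\equiv_w\A_\q$, hence $\p\equiv_c\q$, hence $[\p]=[\q]$. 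This shows $H$ restricted to the quotient of $\mathbb{P}_{\fin}$ is an order-embedding into $\sfrac{(D_\w(\bsigma^0_1)(\Pw),\leq_w)}{\equiv_w}$.

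There is essentially no obstacle here; the corollary is a bookkeeping consequence of the main theorem once Proposition~\ref{orderembedding} pins down the complexity of the image. The only subtlety worth a sentence is making sure that the quotient $\equiv_w$ taken inside $D_\w(\bsigma^0_1)(\Pw)$ agrees with the restriction of the $\equiv_w$ from $\bdelta^0_2(\Pw)$ — which it does, since Wadge equivalence is defined by continuous reductions that do not depend on the ambient class, and membership in $D_\w(\bsigma^0_1)(\Pw)$ is preserved under $\equiv_w$. So the argument is: restrict the map of Theorem~\ref{injectivehomomorphism}, invoke Proposition~\ref{orderembedding} to land in $D_\w(\bsigma^0_1)(\Pw)$, and note that being an order-embedding is inherited by any restriction to a subdomain together with a corestriction to a subset of the range containing the image.
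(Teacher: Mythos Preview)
Your proposal is correct and matches the paper's approach exactly: the paper gives no explicit proof for this corollary, treating it as immediate from Theorem~\ref{injectivehomomorphism} (or equivalently Lemma~\ref{lemmainjectivehomomorphism}) together with Proposition~\ref{orderembedding}, and you have simply spelled out the routine details of that deduction.
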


Now, we introduce some notations to talk about the game-theoretical strength of a given node in a finite branching poset.

Let us fix $\p\in \mathbb{P}_{\fin}$ and $p\in P.$ If it exists, let $k_p\in\omega$ be the length of the largest strictly $\leq_p$-increasing sequence $(s_n)_{n<k_p}$ that satisfies $s_0=p$ and $(\col_p(s_n)=\col_p(p)\Leftrightarrow \text{$n$ is even}).$ The \emph{increasing strength of $p$ in $\p$} is $$\Str_{\incr}(p)=\begin{cases} k_p &\text{ if $k_p\in\w$ exists,} \\ \omega &\text{ otherwise.}\end{cases}$$ Since $\p\in\mathbb{P}_{\fin}$, the latter case can only occur when $p=\bot$. From a game-theoretical viewpoint, if $p\neq \bot$, then $\Str_{\incr}(p)$ corresponds to the length of the strongest $<_p$-increasing run that a player can take while playing in $\p.$

In a similar manner, we define the \emph{decreasing strength of $p$ in $\p$}, denoted by $\Str_{\decr}(p)=k\in\omega,$ as the length of the largest strictly $\leq_p$-decreasing sequence $(s_n)_{n<k}$ that satisfies $s_0=p$ and $(\col_p(s_n)=\col_p(p)\Leftrightarrow \text{$n$ is even}).$ It is well-defined since $\Card(\Pred(p))<\aleph_0$ holds for every $p\in P.$

The increasing and decreasing strengths of a node give a good indication of the strength it bears as a position in the game:

\begin{lemma} \label{strength}
If $\p,\q\in \mathbb{P}_{\fin}$ and $\tau$ is a winning ultrapositional strategy for $\II$ in the game $G_{\mathbb{P}}(\p,\q),$ then for all $p\in P:$ 
\begin{enumerate}[nolistsep]
\item $\Str_{\incr}(p) \leq \Str_{\incr}\big(\tau(p)\big),$ 
\item $\Str_{\decr}(p) \leq \Str_{\decr}\big(\tau(p)\big).$\end{enumerate}
\end{lemma}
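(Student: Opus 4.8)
The plan is to prove both inequalities by a direct game-theoretic argument, exploiting the fact that a winning ultrapositional strategy must reproduce, order-wise and color-wise, any run that $\I$ plays. I would treat the two items symmetrically, so let me describe the argument for the increasing strength; the decreasing case is entirely analogous with $\leq_p$-decreasing sequences in place of increasing ones, and uses the condition $\Card(\Pred(p))<\aleph_0$ to guarantee well-definedness rather than $\p\in\mathbb{P}_{\fin}$.

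First I would fix $p\in P$ and set $k=\Str_{\incr}(p)$. If $k=\omega$ then $p=\bot$ by the remark following the definition of increasing strength, and since every $\leq_q$-minimal element has a similar property in $\mathbb{P}_{\fin}$ one would have to argue that $\tau(\bot)$ is forced to be $\leq_q$-minimal as well (or at least has increasing strength $\omega$); more cleanly, one observes that if $\Str_{\incr}(p)=\omega$ there is, for every $n$, a strictly $\leq_p$-increasing color-alternating sequence of length $n$ starting at $p$, so the finite case below applied for each $n$ gives $\Str_{\incr}(\tau(p))\geq n$ for all $n$, hence $\Str_{\incr}(\tau(p))=\omega$. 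So assume $k\in\omega$ and let $(s_0,\dots,s_{k-1})$ witness $\Str_{\incr}(p)=k$, i.e.\ $s_0=p$, the sequence is strictly $\leq_p$-increasing, and $\col_p(s_n)=\col_p(p)$ exactly when $n$ is even.

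Now I would have $\I$ play the run $s_0,s_1,\dots,s_{k-1},s_{k-1},s_{k-1},\dots$ (ultimately constant at $s_{k-1}$, as the game requires) and let $\II$ answer according to $\tau$, producing $q_n=\tau(s_n)$. Since $\tau$ is winning: condition (1) of Definition \ref{posetgame} applied to the pairs $n<m\leq k-1$ gives $s_n\leq_p s_m\Rightarrow q_n\leq_q q_m$, so $q_0\leq_q q_1\leq_q\cdots\leq_q q_{k-1}$; and condition (2) gives $\col_q(q_n)=\col_p(s_n)$, so $\col_q(q_n)=\col_q(q_0)$ exactly when $n$ is even (using $\col_q(q_0)=\col_p(s_0)=\col_p(p)$). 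To upgrade this non-strict increasing chain to a strictly increasing one of the same length, note that whenever $q_n=q_{n+1}$ we would have $\col_q(q_n)=\col_q(q_{n+1})$, contradicting that consecutive indices $n,n+1$ have different parities; hence $q_n\neq q_{n+1}$ for all $n<k-1$, and together with $q_n\leq_q q_{n+1}$ this yields $q_n<_q q_{n+1}$. Therefore $(q_0,\dots,q_{k-1})$ is a strictly $\leq_q$-increasing color-alternating sequence starting at $\tau(p)=q_0$, which by definition of increasing strength gives $\Str_{\incr}(\tau(p))\geq k=\Str_{\incr}(p)$.

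The main obstacle, and the only point requiring care, is the strictness upgrade just described and, in the $k=\omega$ case, confirming that the finite bounds for all $n$ really do force $\Str_{\incr}(\tau(p))=\omega$ — here one should double-check that a color-alternating sequence of every finite length starting at $q_0$ indeed forces strength $\omega$ (it does, directly from the definition of $\Str_{\incr}$ as a supremum of such lengths, with the value $\omega$ assigned when no finite maximum exists). Everything else is a bookkeeping exercise in unwinding the two winning conditions of the game $G_{\mathbb{P}}(\p,\q)$. For item (2) one repeats the argument verbatim with ``$\leq_p$-decreasing'' replacing ``$\leq_p$-increasing'', using that $\Str_{\decr}$ is always finite (guaranteed by $\Card(\Pred(p))<\aleph_0$), so the $\omega$-case does not even arise.
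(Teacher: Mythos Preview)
Your proposal is correct and follows essentially the same argument as the paper: take a witnessing color-alternating chain at $p$, push it through $\tau$, and use the winning conditions to see that the image is again a color-alternating chain of the same length starting at $\tau(p)$; the $\omega$-case is handled by doing this for every finite $n$. The only cosmetic difference is that the paper phrases it as a contradiction and simply asserts that $(\tau(p_n))_{n<k}$ is \emph{strictly} increasing, whereas you spell out the strictness upgrade (consecutive terms have different colors, hence cannot be equal) --- which is exactly the missing justification in the paper's line.
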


\begin{proof} ~
\begin{enumerate}
\item Towards a contradiction, suppose that $\Str_{\incr}(p) > \Str_{\incr}\big(\tau(p)\big)$. We proceed by cases. 
\begin{description}
\item[If $\boldsymbol{\Str_{\incr}(p)\neq \w}$:] assume that $\Str_{\incr}(p)=k$ is witnessed by a sequence $(p_n)_{n<k}.$ Since $\tau$ is winning, $\big(\tau(p_n)\big)_{n<k}$ is strictly $\leq_q$-increasing and satisfies $\tau(p_0)=\tau(p)$ and $\big(\col_q\big(\tau(p_n)\big)=\col_p\big(\tau(p)\big)\Leftrightarrow \text{$n$ is even}\big).$ Thus $\Str_{\incr}\big(\tau(p)\big)\geq k,$ a contradiction. 
\item[If $\boldsymbol{\Str_{\incr}(p)= \w}$:] for all $k\in \w,$ there exists a strictly $\leq_p$-increasing sequence $(s_n)_{n<k}$ that satisfies $s_0=p$ and $(\col_p(s_n)=\col_p(p)\Leftrightarrow \text{$n$ is even})$. Since $\tau$ is winning, $\big(\tau(p_n)\big)_{n<k}$ is strictly $\leq_q$-increasing and satisfies $\tau(p_0)=\tau(p)$ and $\big(\col_q\big(\tau(p_n)\big)=\col_p\big(\tau(p)\big)\Leftrightarrow \text{$n$ is even}\big).$ Therefore, $\Str_{\incr}\big(\tau(p)\big)=\w,$ a contradiction.
\end{description}

\item Towards a contradiction, suppose that $\Str_{\decr}(p) > \Str_{\decr}\big(\tau(p)\big)$. We also suppose that $\Str_{\decr}(p)=k\in\w$ is witnessed by a sequence $(p_n)_{n<k}.$ Since $\tau$ is winning, $\big(\tau(p_n)\big)_{n<k}$ is strictly $\leq_q$-decreasing and satisfies $\tau(p_0)=\tau(p)$ and $\big(\col_q\big(\tau(p_n)\big)=\col_p\big(\tau(p)\big)\Leftrightarrow \text{$n$ is even}\big).$ Thus $\Str_{\decr}\big(\tau(p)\big)\geq k,$ a contradiction.
\end{enumerate}
\end{proof}

\section{Ill-foundedness of the Wadge order on the Scott domain}

In this section, we prove that the quasi-order $\leq_w$ is already ill-founded inside the class of $\w$-differences of open sets of the Scott domain.

\begin{theorem}\label{illfounded} ~
\begin{center}
$\big(D_\omega(\bsigma^0_1)(\Pw),\leq_w\!\!\big)$ is ill-founded.
\end{center}
\end{theorem}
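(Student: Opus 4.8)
The strategy is to exhibit an infinite strictly $\leq_w$-decreasing sequence of $D_\omega(\bsigma^0_1)$-sets by working entirely on the combinatorial side. Thanks to Theorem \ref{injectivehomomorphism} (and its refinement to $\mathbb{P}_{\fin}$), it suffices to construct a sequence $(\p_n)_{n<\omega}$ of finite branching posets in $\mathbb{P}_{\fin}$ that is strictly $\preccurlyeq_c$-decreasing, i.e.\ $\p_{n+1}\prec_c \p_n$ for every $n$, where $\p_{n+1}\prec_c\p_n$ means $\p_{n+1}\preccurlyeq_c\p_n$ but $\p_n\not\preccurlyeq_c\p_{n+1}$. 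The order-embedding then transports this into an infinite strictly decreasing chain in $\big(D_\omega(\bsigma^0_1)(\Pw),\leq_w\big)$, which is exactly ill-foundedness.

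First I would define the posets $\p_n$ explicitly (this is the ``Figure \ref{posetpn}'' alluded to after Definition \ref{defrusp}). The natural candidate is a family where $\p_n$ is built from finite ``towers'' — alternating chains of colors $0$ and $1$ of bounded length — but arranged so that $\p_n$ contains, below a common $\bot$, copies of towers of every finite height $\le$ some parameter, together with extra branching that makes the $n$-th poset strictly harder than the $(n{+}1)$-st. Concretely, one wants each $\p_n$ to be a shrub satisfying the three color-constraints of Definition \ref{defplay}, with $\bot$ colored $0$, all maximal nodes colored $1$, no $\boldsymbol{\vee_1^0}$, $\boldsymbol{\wedge_0^1}$ or $\boldsymbol{\mid_1^1}$ occurring rigidly, and with every non-$\bot$ node finitely branching so that $\p_n\in\mathbb{P}_{\fin}$ and hence $\A_{\p_n}\in D_\omega(\bsigma^0_1)(\scott)$ by Proposition \ref{orderembedding}. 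The key design feature should be that $\p_{n+1}$ embeds into $\p_n$ as an ideal (yielding $\p_{n+1}\preccurlyeq_c\p_n$ by Proposition \ref{inducedprefixsubposet}), while $\p_n$ carries some node whose increasing strength — or a similar local invariant — strictly exceeds anything available in $\p_{n+1}$.

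The direction $\p_{n+1}\preccurlyeq_c\p_n$ should be the easy half: arrange the construction so that $\p_{n+1}$ is literally (isomorphic to) an ideal of $\p_n$, and invoke Proposition \ref{inducedprefixsubposet}. The genuine content, and the main obstacle, is proving the \emph{non}-reduction $\p_n\not\preccurlyeq_c\p_{n+1}$. Here I would use Proposition \ref{propcharposet} to reduce it to showing that player $\II$ has no ultrapositional winning strategy in $G_{\mathbb{P}}(\p_n,\p_{n+1})$, and then derive a contradiction from Lemma \ref{strength}: if $\tau$ were winning, then for every $p\in P_n$ we would need $\Str_{\incr}(p)\le\Str_{\incr}(\tau(p))$ and $\Str_{\decr}(p)\le\Str_{\decr}(\tau(p))$, and the posets must be engineered so that some node $p\in P_n$ has strictly larger increasing (or decreasing) strength than every node of $\q=\p_{n+1}$ of the same color — a finite, checkable incompatibility. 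Balancing this against the requirement $\p_{n+1}\preccurlyeq_c\p_n$ (so the chain is genuinely decreasing and not an antichain) is the delicate point: the ``extra strength'' in $\p_n$ must be visible to the strength invariants yet not reconstructible inside $\p_{n+1}$, while $\p_{n+1}$'s nodes must still all find homomorphic images in $\p_n$.

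Finally I would assemble the pieces: each $\A_{\p_n}$ lies in $D_\omega(\bsigma^0_1)(\scott)$ by Proposition \ref{orderembedding}; by Lemma \ref{lemmainjectivehomomorphism} together with the two halves above, $\A_{\p_{n+1}}\le_w\A_{\p_n}$ and $\A_{\p_n}\not\le_w\A_{\p_{n+1}}$, so $\A_{\p_{n+1}}<_w\A_{\p_n}$ for all $n$; hence $(\A_{\p_n})_{n<\omega}$ is an infinite strictly $\leq_w$-decreasing sequence inside $D_\omega(\bsigma^0_1)(\scott)$, witnessing that this quasi-order is ill-founded.
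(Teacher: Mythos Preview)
Your overall plan matches the paper's approach closely: construct $(\p_n)_{n\in\omega^+}$ in $\mathbb{P}_{\fin}$, get $\A_{\p_n}\in D_\omega(\bsigma^0_1)(\scott)$ from Proposition \ref{orderembedding}, obtain $\p_m\preccurlyeq_c\p_n$ for $n<m$ by making $\p_m$ an ideal of $\p_n$ (Proposition \ref{inducedprefixsubposet}), and prove the non-reduction via the game and Lemma \ref{strength}. That skeleton is exactly what the paper does.

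However, your proposed mechanism for the non-reduction direction is too crude to work, and this is a genuine gap. You suggest engineering $\p_n$ so that \emph{some node $p\in P_n$ has strictly larger increasing (or decreasing) strength than every node of $\p_{n+1}$ of the same color}. This cannot be sustained along an infinite chain: if every node of $\p_{n+1}$ has increasing strength bounded by some $k_{n+1}$, then $\A_{\p_{n+1}}$ lands in a finite level $D_{k}(\bsigma^0_1)$, and iterating forces the $\p_n$ to have uniformly bounded strength, which kills the descent (by Theorem \ref{thmselivanov}(1), finite difference levels are single degrees). Conversely, if the $\p_n$ all have nodes of unbounded increasing strength --- as they must to stay genuinely at level $\omega$ --- then no single node of $\p_n$ can dominate all of $\p_{n+1}$, and a one-shot application of Lemma \ref{strength} yields nothing.

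The paper resolves this with a \emph{dynamic} argument that you are missing. In the paper's $\p_n$, the branches $B_k$ carry towers whose height grows like $\lfloor k/n\rfloor$, so each $\p_n$ has unbounded increasing strength but the \emph{rate} at which strength grows along the branch index $k$ depends on $n$. The key step (Claim \ref{claim35}) is a local constraint on ultrapositional strategies: facing a ``right-shift'' $(w_k,y_k,w_{k+1})$, player $\II$ can advance by at most one branch. Hence if $\I$ plays a long sequence of right-shifts in $\p_n$, player $\II$ in $\p_m$ (with $m>n$) lags behind in branch index, and since tower heights grow faster in $\p_n$ than in $\p_m$, eventually $\Str_{\incr}$ of $\I$'s current node exceeds that of $\II$'s, contradicting Lemma \ref{strength}. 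So Lemma \ref{strength} is applied not to a fixed node chosen in advance, but to a node reached after forcing $\II$ through a controlled trajectory. Your sketch does not contain this idea, and without it the non-reduction half does not go through.
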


\begin{proof}
The proof consists in exhibiting a strictly $\preccurlyeq_c$-decreasing sequence of posets $(\p_n)_{n\in\w^+}$ in $\mathbb{P}_{\lay}$ and making use of the Lemma \ref{lemmainjectivehomomorphism}.

First, let us fix $n\in\w^+.$ We define $\p_n=(P_n,\leq_{p_n},\col_{p_n})$ as the following 2-colored countable poset:

\begin{figure}[H]
\centering
\fbox{
\scalebox{0.75}{
\begin{tikzpicture}[node distance=1.2cm and 1cm]
\node (wn) [fill=lightgray,rounded corners] at (0,0) {$w_{n}$};
\node (wn-1) [left of = wn,fill=lightgray,rounded corners] {$w_{n-1}$};
\node (wn-2) [left of = wn-1] {$\cdots$};
\node (w2) [left of = wn-2,fill=lightgray,rounded corners] {$w_2$};
\node (w1) [left of = w2,fill=lightgray,rounded corners] {$w_1$};
\node (w0) [left of = w1,fill=lightgray,rounded corners] {$w_0$};
\node (wn+1) [right of = wn,fill=lightgray,rounded corners] {$w_{n+1}$};
\node (wn+2) [right of = wn+1] {$\cdots$};
\node (w2n-1) [right of = wn+2,fill=lightgray,rounded corners] {$w_{2n-1}$};
\node (w2n) [right of = w2n-1,fill=lightgray,rounded corners] {$w_{2n}$};
\node (w2n+1) [right of = w2n,fill=lightgray,rounded corners] {$w_{2n+1}$};
\node (w2n+2) [right of = w2n+1] {$\cdots$};

\node (bot) at (0,-2) {$\bot$};

\node (xn) at (0,1) {$x_{n}$};
\node (xn-1) [left of = xn] {$x_{n-1}$};
\node (xn-2) [left of = xn-1] {$\cdots$};
\node (x2) [left of = xn-2] {$x_2$};
\node (x1) [left of = x2] {$x_1$};
\node (x0) [left of = x1] {$x_0$};
\node (xn+1) [right of = xn] {$x_{n+1}$};
\node (xn+2) [right of = xn+1] {$\cdots$};
\node (x2n-1) [right of = xn+2] {$x_{2n-1}$};
\node (x2n) [right of = x2n-1] {$x_{2n}$};
\node (x2n+1) [right of = x2n] {$x_{2n+1}$};
\node (x2n+2) [right of = x2n+1] {$\cdots$};

\node (yn) at (0.5,2) {$y_{n}$};
\node (yn-1) [left of = yn] {$y_{n-1}$};
\node (yn-2) [left of = yn-1] {$\cdots$};
\node (y2) [left of = yn-2] {$y_2$};
\node (y1) [left of = y2] {$y_1$};
\node (y0) [left of = y1] {$y_0$};
\node (yn+1) [right of = yn] {$y_{n+1}$};
\node (yn+2) [right of = yn+1] {$\cdots$};
\node (y2n-1) [right of = yn+2] {$y_{2n-1}$};
\node (y2n) [right of = y2n-1] {$y_{2n}$};
\node (y2n+1) [right of = y2n] {$y_{2n+1}$};
\node (y2n+2) [right of = y2n+1] {$\cdots$};

\node (zn) [fill=lightgray,rounded corners] at (0.5,3) {$z^0_{n}$};
\node (zn-1) [left of = zn,fill=lightgray,rounded corners] {$z^0_{n-1}$};
\node (zn-2) [left of = zn-1] {$\cdots$};
\node (z2) [left of = zn-2,fill=lightgray,rounded corners] {$z^0_2$};
\node (z1) [left of = z2,fill=lightgray,rounded corners] {$z^0_1$};
\node (z0) [left of = z1,fill=lightgray,rounded corners] {$z^0_0$};
\node (zn+1) [right of = zn,fill=lightgray,rounded corners] {$z^0_{n+1}$};
\node (zn+2) [right of = zn+1] {$\cdots$};
\node (z2n-1) [right of = zn+2,fill=lightgray,rounded corners] {$z^0_{2n-1}$};
\node (z2n) [right of = z2n-1,fill=lightgray,rounded corners] {$z^0_{2n}$};
\node (z2n+1) [right of = z2n,fill=lightgray,rounded corners] {$z^0_{2n+1}$};
\node (z2n+2) [right of = z2n+1] {$\cdots$};

\node (vn) at (0.5,4) {$z^1_{n}$};
\node (vn+1) [right of = vn] {$z^1_{n+1}$};
\node (vn+2) [right of = vn+1] {$\cdots$};
\node (v2n-1) [right of = vn+2] {$z^1_{2n-1}$};
\node (v2n) [right of = v2n-1] {$z^1_{2n}$};
\node (v2n+1) [right of = v2n] {$z^1_{2n+1}$};
\node (v2n+2) [right of = v2n+1] {$\cdots$};

\node (un) [fill=lightgray,rounded corners] at (0.5,5) {$z^2_{n}$};
\node (un+1) [right of = un,fill=lightgray,rounded corners] {$z^2_{n+1}$};
\node (un+2) [right of = un+1] {$\cdots$};
\node (u2n-1) [right of = un+2,fill=lightgray,rounded corners] {$z^2_{2n-1}$};
\node (u2n) [right of = u2n-1,fill=lightgray,rounded corners] {$z^2_{2n}$};
\node (u2n+1) [right of = u2n,fill=lightgray,rounded corners] {$z^2_{2n+1}$};
\node (u2n+2) [right of = u2n+1] {$\cdots$};

\node (t2n) at (5.3,6) {$z^3_{2n}$};
\node (t2n+1) [right of = t2n] {$z^3_{2n+1}$};
\node (t2n+2) [right of = t2n+1] {$\cdots$};

\node (s2n) [fill=lightgray,rounded corners] at (5.3,7) {$z^4_{2n}$};
\node (s2n+1) [right of = s2n,fill=lightgray,rounded corners] {$z^4_{2n+1}$};
\node (s2n+2) [right of = s2n+1] {$\cdots$};

\draw [->,thick] (bot) -- (w0);
\draw [->,thick] (bot) -- (w1);
\draw [->,thick] (bot) -- (w2);
\draw [->,thick] (bot) -- (wn-1);
\draw [->,thick] (bot) -- (wn);
\draw [->,thick] (bot) -- (wn+1);
\draw [->,thick] (bot) -- (w2n-1);
\draw [->,thick] (bot) -- (w2n);
\draw [->,thick] (bot) -- (w2n+1);

\draw [->,thick] (w0) -- (x0);
\draw [->,thick] (w1) -- (x1);
\draw [->,thick] (w2) -- (x2);
\draw [->,thick] (wn-1) -- (xn-1);
\draw [->,thick] (wn) -- (xn);
\draw [->,thick] (wn+1) -- (xn+1);
\draw [->,thick] (w2n-1) -- (x2n-1);
\draw [->,thick] (w2n) -- (x2n);
\draw [->,thick] (w2n+1) -- (x2n+1);

\draw [->,thick] (x0) -- (y0);
\draw [->,thick] (x1) -- (y1);
\draw [->,thick] (x2) -- (y2);
\draw [->,thick] (xn-1) -- (yn-1);
\draw [->,thick] (xn) -- (yn);
\draw [->,thick] (xn+1) -- (yn+1);
\draw [->,thick] (x2n-1) -- (y2n-1);
\draw [->,thick] (x2n) -- (y2n);
\draw [->,thick] (x2n+1) -- (y2n+1);

\draw [->,thick] (x1) -- (y0);
\draw [->,thick] (x2) -- (y1);
\draw [->,thick] (xn) -- (yn-1);
\draw [->,thick] (xn+1) -- (yn);
\draw [->,thick] (x2n) -- (y2n-1);
\draw [->,thick] (x2n+1) -- (y2n);

\draw [->,thick] (y0) -- (z0);
\draw [->,thick] (y1) -- (z1);
\draw [->,thick] (y2) -- (z2);
\draw [->,thick] (yn-1) -- (zn-1);
\draw [->,thick] (yn) -- (zn);
\draw [->,thick] (yn+1) -- (zn+1);
\draw [->,thick] (y2n-1) -- (z2n-1);
\draw [->,thick] (y2n) -- (z2n);
\draw [->,thick] (y2n+1) -- (z2n+1);

\draw [->,thick] (zn) -- (vn);
\draw [->,thick] (zn+1) -- (vn+1);
\draw [->,thick] (z2n-1) -- (v2n-1);
\draw [->,thick] (z2n) -- (v2n);
\draw [->,thick] (z2n+1) -- (v2n+1);

\draw [->,thick] (vn) -- (un);
\draw [->,thick] (vn+1) -- (un+1);
\draw [->,thick] (v2n-1) -- (u2n-1);
\draw [->,thick] (v2n) -- (u2n);
\draw [->,thick] (v2n+1) -- (u2n+1);

\draw [->,thick] (u2n) -- (t2n);
\draw [->,thick] (u2n+1) -- (t2n+1);

\draw [->,thick] (t2n) -- (s2n);
\draw [->,thick] (t2n+1) -- (s2n+1);
\end{tikzpicture}}
}
\caption{The 2-colored countable poset $\p_n\in \mathbb{P}_{\lay}$ for $n\in\w^+$.}\label{posetpn}
\end{figure}

Formally, the set of nodes is:
\begin{align*}
P_n&=\{\bot\}\cup \{w_m,x_m,y_m\}_{m\in\omega} \\ 
&\ \ \ \cup\big\{z^{2k}_{m}\mid k\in\omega, n\geq km\big\} \cup\big\{z^{2k+1}_{m}\mid k\in\omega, n\geq (k+1)m\big\},
\end{align*}
the order relation is:
\begin{align*}
\leq_{p_n}&=\big\{(\bot,w_m),(w_m,x_m),(x_m,y_m),(x_{m+1},y_m),(y_m,z^0_{m})\big\}_{m\in\omega}  \\ 
&\ \ \ \cup \left\{(z^k_{m},z^{k+1}_{m})\mid k\leq \left\lfloor\frac{m}{n}\right\rfloor\cdot 2-1\right\},
\end{align*}
where $\left\lfloor\frac{m}{n}\right\rfloor$ denotes the integer part of $\frac{m}{n},$ and the 2-coloring is:
\begin{align*}
\col_{p_n}: P_n &\to 2 \\
p&\mapsto 0 \ \ \  \text{ if $p\in \{\bot, x_m, y_m\}_{m\in\w} \cup \bigcup_{m\in\w} z^{\odd}_m$}, \\
p&\mapsto 1 \ \ \  \text{ if $p\in \{w_m\}_{m\in\w} \cup \bigcup_{m\in\w} z^{\even}_m$}, 
\end{align*}
where $z^\cdot_m=\{z^k_m\mid k\leq\left\lfloor\frac{m}{n}\right\rfloor\cdot 2\},$ $z^{\even}_m=\{z^k_m\in z^\cdot_k\mid \text{$k$ even}\},$ and $z^{\odd}_m=\{z^k_m\in z^\cdot_k\mid \text{$k$ odd}\}.$

For all $n\in\w^+,$ it is easy to check that all the requirements that are needed for $\p_n$ to belong to $\mathbb{P}_{\fin}$ are fulfilled. Therefore, by Proposition \ref{orderembedding}, we have: $$\A_{\p_n}\in D_\omega(\bsigma^0_1)(\scott).$$

For the remainder of the proof, we need some notations. For any $k\in\w,$ we call \emph{branch $k$ of $\p_n$} the set of nodes $B_k=\{w_k,x_k,y_k\}\cup z^\cdot_k,$ and \emph{right-shift in $\p_n$} any sequence of moves of the form $(w_k,y_k,w_{k+1}).$ First, we describe the behavior of an ultrapositional winning strategy facing a right-shift.

\begin{claim} \label{claim35}
Let $n,m\in\w^+$ and $\tau$ be an ultrapositional strategy in $G_{\mathbb{P}}(\p_n,\p_m)$. If $\I$'s moves are a right-shift $(w_k,y_k,w_{k+1})$ and $\tau(w_k)\in B_l$ for some $l\in\w,$ then $\tau(w_{k+1})\in B_{l'}$ for some $l'\leq l+1$.
\end{claim}

\begin{claimproof}
We split the proof in two different cases.
\begin{description}
\item[If $\boldsymbol{l=0}$ holds:]
since $w_k\leq_{p_n} y_k,$ $\col_{p_n}(y_k)=0$, $\tau$ is winning and $\tau(w_k)\in B_0$, we get $\tau(y_k)\in \{x_0,y_0\}.$ Moreover, since $w_{k+1}\leq_{p_n} y_k,$ $\col_{p_n}(w_{k+1})=1$ and $\tau$ is winning, we get:
$$\tau(w_{k+1})\in \{w_0,w_{1}\} \subseteq B_{0}\cup B_{1}.$$
\item[If $\boldsymbol{l\in\w^+}$ holds:]
once again, since $w_k\leq_{p_n} y_k,$ $\col_{p_n}(y_k)=0$, $\tau$ is winning and $\tau(w_k)\in B_l$, we get $\tau(y_k)\in z^{\odd}_{l-1}\cup z^{\odd}_{l} \cup \{x_l,y_l,y_{l-1}\}.$ Moreover, since $w_{k+1}\leq_{p_n} y_k,$ $\col_{p_n}(w_{k+1})=1$ and $\tau$ is winning, we get:
$$\tau(w_{k+1})\in z^{\even}_{l-1}\cup z^{\even}_{l} \cup \{w_{l-1},w_l,w_{l+1}\} \subseteq \bigcup_{l'\leq l+1} B_{l'}.$$
\end{description}
\end{claimproof}

It remains to show that the sequence $(\p_n)_{n\in\w_+}$ is an infinite strictly $\preccurlyeq_c$-decreasing sequence in $\mathbb{P}_{\lay}$.

\begin{claim}
If $0<n<m<\w,$ then $\p_m\preccurlyeq_c \p_n.$
\end{claim}

\begin{claimproof}
It suffices to observe that $\p_m$ is an ideal of $\p_n$ and use Proposition \ref{inducedprefixsubposet}.
\end{claimproof}

\begin{claim}\label{claim37}
If $0<n<m<\w,$ then $\p_n\not\preccurlyeq_c \p_m.$
\end{claim}

\begin{claimproof}
Towards a contradiction, suppose that $\p_n\preccurlyeq_c \p_m$ holds. By Proposition \ref{propcharposet}, player $\II$ has a winning ultrapositional strategy $\tau$ in the game $G_{\mathbb{P}}(\p_n,\p_m)$.

The idea of the proof is to construct a particular run of the game that $\tau$ cannot win. By Claim \ref{claim35}, if $\I$ plays a sequence of the form $(w_0,y_0,w_1,y_1,w_2,\dots)$ composed with right-shifts, then $\II$'s moves are limited. In particular, whenever $\I$ shifts from $B_k$ to $B_{k+1},$ $\II$ can only shift from $B_l$ to $B_{l'}$ where $l'\leq l+1.$ Because $n<m,$ $\I$ can finally reach a node of greater increasing strength than the one reached by $\II,$ which leads to a contradiction.

More formally, suppose that $\I$'s first move is $w_0$ so that $\tau(w_0)\in B_{k_0}$ for some $k_0\in\w,$ and that $\I$ plays a run composed with several right-shifts $$(w_0,y_0,w_1,y_1,w_2,\dots,w_l).$$ By an iteration of Claim \ref{claim35}, we get $\tau(w_l)\in B_{l'}$ for some $l'\leq k_0+l$. Since $n<m,$ there exists $n_0\in \w$ such that the following inequalities work:
$$\Str_{\incr}(w_{nmn_0})=2mn_0+3>2nn_0+\Str_{\incr}(w_{k_0})\geq\Str_{\incr}\big(\tau(w_{nmn_0})\big),$$
which is a contradiction to Lemma \ref{strength}.
\end{claimproof}

So, we constructed an infinite strictly $\preccurlyeq_c$-decreasing sequence of embeddable posets, namely $$\p_1\succ_c \p_2 \succ_c \p_3 \succ_c \p_4 \succ_c \dots$$
By Lemma \ref{lemmainjectivehomomorphism}, we obtain an infinite strictly $\leq_w$-decreasing sequence of subsets of $\scott$, namely: $$\A_{\p_1}
>_w\A_{\p_2}>_w\A_{\p_3}>_w\A_{\p_4}>_w\dots$$
which were also proved to be differences of $\w$ open sets.
\end{proof}

\section{Antichains in the Wadge order on the Scott domain}

We prove that infinite $\leq_w$-antichains already exist within the class of $\w$-differences of open subsets of the Scott domain. The proof is nothing but a tailoring of the proof of Theorem \ref{illfounded}.

\begin{theorem}\label{antichain} ~
\begin{center}$\big(D_\omega(\bsigma^0_1)(\Pw),\leq_w)$ has infinite antichains.\end{center}
\end{theorem}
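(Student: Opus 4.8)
The plan is to follow the blueprint of the proof of Theorem~\ref{illfounded} almost verbatim, replacing the strictly $\preccurlyeq_c$-decreasing chain $(\p_n)_{n\in\w^+}$ by an infinite $\preccurlyeq_c$-antichain $(\q_n)_{n\in\w^+}$ inside $\mathbb{P}_{\fin}$. Once such a family is in hand, Proposition~\ref{orderembedding} gives $\A_{\q_n}\in D_\w(\bsigma^0_1)(\scott)$ for every $n$, and Lemma~\ref{lemmainjectivehomomorphism} turns pairwise $\preccurlyeq_c$-incomparability of the $\q_n$'s into pairwise $\leq_w$-incomparability of the $\A_{\q_n}$'s, which is exactly the assertion that $\big(D_\w(\bsigma^0_1)(\scott),\leq_w\big)$ has an infinite antichain.

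For the construction (Figure~\ref{posetqn}) I would keep the whole ``branch skeleton'' of $\p_n$ untouched: a bottom $\bot$ of color $0$; for each $k\in\w$ the nodes $w_k$ (color~$1$), $x_k,y_k$ (color~$0$) with $\bot<w_k<x_k<y_k$ and $x_{k+1}<y_k$; and above each $y_k$ an alternately coloured tower $z^0_k<z^1_k<\dots<z^{g_n(k)}_k$ of \emph{even} length $g_n(k)$, the even-indexed nodes having color~$1$. The \emph{only} difference with $\p_n$ is the choice of the tower-height sequence: instead of the monotone $g_n(k)=2\lfloor k/n\rfloor$ I would pick nondecreasing functions $g_n:\w\to\w$ with even values that \emph{leapfrog} one another, i.e.\ such that for every $n\neq m$ and every $c\in\w$ there exists $k$ with $g_n(k)>g_m(c+k)$. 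Such a family is produced by a straightforward diagonalization: fix a partition of $\w$ into consecutive blocks $J_0,J_1,\dots$ whose lengths go to infinity, let each $g_n$ be constant on each block, enumerate all ordered pairs $(n,m)$ with $n\neq m$ with infinite repetition, and at the block handling $(n,m)$ bump $g_n$ above the current value of $g_m$; this guarantees infinitely many blocks $J_t$ on which $g_n$ exceeds the value of $g_m$ on $J_{t+1}$, which in turn yields the displayed leapfrogging condition. One then checks -- exactly as for $\p_n$, using only that each $g_n$ takes finite even values -- that $\q_n\in\mathbb{P}_{\fin}\subseteq\mathbb{P}_{\lay}$: predecessor sets and, away from $\bot$, successor sets are finite, $\q_n$ is bounded complete, $\bot$ has color~$0$, the maximal elements are the tower tops and have color~$1$, and none of the three forbidden configurations of Definition~\ref{defplay} occurs.

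To see that $\q_n\not\preccurlyeq_c\q_m$ whenever $n\neq m$, I would re-run the argument of Claim~\ref{claim37}. The right-shift analysis of Claim~\ref{claim35} uses only the branch skeleton, which is unchanged, so it applies verbatim: if $\II$ had a winning ultrapositional strategy $\tau$ in $G_{\mathbb{P}}(\q_n,\q_m)$ and $\I$ played the right-shift run $(w_0,y_0,w_1,y_1,\dots,w_K)$ in $\q_n$, then $\tau(w_K)$ would lie in some branch $B_l$ of $\q_m$ with $l\leq k_0+K$, where $\tau(w_0)\in B_{k_0}$. In $\q_n$ one computes $\Str_{\incr}(w_K)=g_n(K)+3$, while every color-$1$ node of the branch $B_l$ of $\q_m$ has increasing strength at most $g_m(l)+3$ (here nondecreasingness of $g_m$ is used, so that $g_m(l)\leq g_m(k_0+K)$); hence Lemma~\ref{strength} would force $g_n(K)\leq g_m(k_0+K)$ for \emph{all} $K$, contradicting the leapfrogging property. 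Since the construction is symmetric in $n$ and $m$, the same reasoning gives $\q_m\not\preccurlyeq_c\q_n$, so $(\q_n)_{n\in\w^+}$ is an infinite $\preccurlyeq_c$-antichain in $\mathbb{P}_{\lay}$, and the theorem follows via Lemma~\ref{lemmainjectivehomomorphism} and Proposition~\ref{orderembedding} as indicated above.

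I expect the one genuinely new point -- the sole place where this proof departs from that of Theorem~\ref{illfounded} -- to be the design of the height functions $g_n$: the monotone choice $g_n(k)=2\lfloor k/n\rfloor$ produces a $\preccurlyeq_c$-\emph{chain}, so one must instead engineer the $g_n$'s so that no two of them become eventually comparable after any finite shift of the argument, while still keeping every tower finite (hence every $\q_n$ finitely branching). Everything else -- membership in $\mathbb{P}_{\lay}$, the right-shift claim, the strength estimate, and the transfer to the Wadge order -- is a direct transcription of the corresponding step in the proof of Theorem~\ref{illfounded}.
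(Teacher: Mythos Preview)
Your proposal is correct, but it takes a genuinely different route from the paper.

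The paper does \emph{not} tamper with the tower heights. Instead it keeps the growth rate $2\lfloor k/n\rfloor$ from $\p_n$ and modifies the \emph{stems}: in $\q_n$ each $w_k$ is replaced by an alternately-colored chain $x^0_k<x^1_k<\dots<x^{2n-1}_k$ of length $2n$ below $y_k$. The two directions of incomparability are then handled \emph{asymmetrically}. For $n<m$, the direction $\q_n\not\preccurlyeq_c\q_m$ is proved exactly as Claim~\ref{claim37}, via right-shifts and increasing strength. The other direction, $\q_m\not\preccurlyeq_c\q_n$, exploits the new stems through \emph{decreasing} strength: $\Str_{\decr}(x_0^{2m-2})=2m>2n$ exceeds the decreasing strength of every $x^{2j}_i$ in $\q_n$, so $\tau(x_0^{2m-2})$ is forced into some $z$-tower $z^\cdot_{l_0}$; right-shifts by $\I$ then trap $\II$ permanently in that single finite tower, and increasing strength finishes the job.

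Your construction instead keeps the $\p_n$-skeleton verbatim and replaces the monotone height functions by mutually leapfrogging ones, so that the same right-shift/increasing-strength argument handles \emph{both} directions symmetrically; the price is the small diagonalization needed to manufacture the $g_n$'s (which, as you note, must satisfy $g_n(k)>g_m(k+c)$ for arbitrarily large $k$, for every $c$ and every $n\neq m$). The paper's version is fully explicit and showcases the second half of Lemma~\ref{strength} (which is otherwise unused in Theorem~\ref{illfounded}); yours is more uniform with the proof of Theorem~\ref{illfounded}, reuses Claim~\ref{claim35} literally, and never invokes $\Str_{\decr}$ at all.
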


\begin{proof}
We construct an infinite sequence of embeddable posets $(\q_n)_{n\in\w_+}$ that are pairwise $\preccurlyeq_c$-incomparable.

We fix $n\in \omega^+$ and define $\q_n=(Q_n, \leq_{q_n},\col_{q_n})$ as the following 2-colored countable poset:

\begin{figure}[H]
\centering
\fbox{
\scalebox{0.75}{
\begin{tikzpicture}[scale=1,node distance=1.2cm and 1cm]
\node (wn) [fill=lightgray,rounded corners] at (0,-3) {$x^0_{n}$};
\node (wn-1) [left of = wn,fill=lightgray,rounded corners] {$x^0_{n-1}$};
\node (wn-2) [left of = wn-1] {$\cdots$};
\node (w2) [left of = wn-2,fill=lightgray,rounded corners] {$x^0_2$};
\node (w1) [left of = w2,fill=lightgray,rounded corners] {$x^0_1$};
\node (w0) [left of = w1,fill=lightgray,rounded corners] {$x^0_0$};
\node (wn+1) [right of = wn,fill=lightgray,rounded corners] {$x^0_{n+1}$};
\node (wn+2) [right of = wn+1] {$\cdots$};
\node (w2n-1) [right of = wn+2,fill=lightgray,rounded corners] {$x^0_{2n-1}$};
\node (w2n) [right of = w2n-1,fill=lightgray,rounded corners] {$x^0_{2n}$};
\node (w2n+1) [right of = w2n,fill=lightgray,rounded corners] {$x^0_{2n+1}$};
\node (w2n+2) [right of = w2n+1] {$\cdots$};

\node (an) at (0,-2) {$x^1_{n}$};
\node (an-1) [left of = an] {$x^1_{n-1}$};
\node (an-2) [left of = an-1] {$\cdots$};
\node (a2) [left of = an-2] {$x^1_2$};
\node (a1) [left of = a2] {$x^1_1$};
\node (a0) [left of = a1] {$x^1_0$};
\node (an+1) [right of = an] {$x^1_{n+1}$};
\node (an+2) [right of = an+1] {$\cdots$};
\node (a2n-1) [right of = an+2] {$x^1_{2n-1}$};
\node (a2n) [right of = a2n-1] {$x^1_{2n}$};
\node (a2n+1) [right of = a2n] {$x^1_{2n+1}$};
\node (a2n+2) [right of = a2n+1] {$\cdots$};

\node (bn) at (0,-1) {$\vdots$};
\node (bn-1) [left of = bn] {$\vdots$};
\node (bn-2) [left of = bn-1] {$\cdots$};
\node (b2) [left of = bn-2] {$\vdots$};
\node (b1) [left of = b2] {$\vdots$};
\node (b0) [left of = b1] {$\vdots$};
\node (bn+1) [right of = bn] {$\vdots$};
\node (bn+2) [right of = bn+1] {$\cdots$};
\node (b2n-1) [right of = bn+2] {$\vdots$};
\node (b2n) [right of = b2n-1] {$\vdots$};
\node (b2n+1) [right of = b2n] {$\vdots$};
\node (b2n+2) [right of = b2n+1] {$\cdots$};

\node (cn) [fill=lightgray,rounded corners] at (0,0) {$x^{2n-2}_{n}$};
\node (cn-1) [left of = cn,fill=lightgray,rounded corners] {$x^{2n-2}_{n-1}$};
\node (cn-2) [left of = cn-1] {$\cdots$};
\node (c2) [left of = cn-2,fill=lightgray,rounded corners] {$x^{2n-2}_2$};
\node (c1) [left of = c2,fill=lightgray,rounded corners] {$x^{2n-2}_1$};
\node (c0) [left of = c1,fill=lightgray,rounded corners] {$x^{2n-2}_0$};
\node (cn+1) [right of = cn,fill=lightgray,rounded corners] {$x^{2n-2}_{n+1}$};
\node (cn+2) [right of = cn+1] {$\cdots$};
\node (c2n-1) [right of = cn+2,fill=lightgray,rounded corners] {$x^{2n-2}_{2n-1}$};
\node (c2n) [right of = c2n-1,fill=lightgray,rounded corners] {$x^{2n-2}_{2n}$};
\node (c2n+1) [right of = c2n,fill=lightgray,rounded corners] {$x^{2n-2}_{2n+1}$};
\node (c2n+2) [right of = c2n+1] {$\cdots$};

\node (bot) at (0,-5) {$\bot$};

\node (xn) at (0,1) {$x^{2n-1}_{n}$};
\node (xn-1) [left of = xn] {$x^{2n-1}_{n-1}$};
\node (xn-2) [left of = xn-1] {$\cdots$};
\node (x2) [left of = xn-2] {$x^{2n-1}_2$};
\node (x1) [left of = x2] {$x^{2n-1}_1$};
\node (x0) [left of = x1] {$x^{2n-1}_0$};
\node (xn+1) [right of = xn] {$x^{2n-1}_{n+1}$};
\node (xn+2) [right of = xn+1] {$\cdots$};
\node (x2n-1) [right of = xn+2] {$x^{2n-1}_{2n-1}$};
\node (x2n) [right of = x2n-1] {$x^{2n-1}_{2n}$};
\node (x2n+1) [right of = x2n] {$x^{2n-1}_{2n+1}$};
\node (x2n+2) [right of = x2n+1] {$\cdots$};

\node (yn) at (0.6,2) {$y_{n}$};
\node (yn-1) [left of = yn] {$y_{n-1}$};
\node (yn-2) [left of = yn-1] {$\cdots$};
\node (y2) [left of = yn-2] {$y_2$};
\node (y1) [left of = y2] {$y_1$};
\node (y0) [left of = y1] {$y_0$};
\node (yn+1) [right of = yn] {$y_{n+1}$};
\node (yn+2) [right of = yn+1] {$\cdots$};
\node (y2n-1) [right of = yn+2] {$y_{2n-1}$};
\node (y2n) [right of = y2n-1] {$y_{2n}$};
\node (y2n+1) [right of = y2n] {$y_{2n+1}$};
\node (y2n+2) [right of = y2n+1] {$\cdots$};

\node (zn) [fill=lightgray,rounded corners] at (0.6,3) {$z^0_{n}$};
\node (zn-1) [left of = zn,fill=lightgray,rounded corners] {$z^0_{n-1}$};
\node (zn-2) [left of = zn-1] {$\cdots$};
\node (z2) [left of = zn-2,fill=lightgray,rounded corners] {$z^0_2$};
\node (z1) [left of = z2,fill=lightgray,rounded corners] {$z^0_1$};
\node (z0) [left of = z1,fill=lightgray,rounded corners] {$z^0_0$};
\node (zn+1) [right of = zn,fill=lightgray,rounded corners] {$z^0_{n+1}$};
\node (zn+2) [right of = zn+1] {$\cdots$};
\node (z2n-1) [right of = zn+2,fill=lightgray,rounded corners] {$z^0_{2n-1}$};
\node (z2n) [right of = z2n-1,fill=lightgray,rounded corners] {$z^0_{2n}$};
\node (z2n+1) [right of = z2n,fill=lightgray,rounded corners] {$z^0_{2n+1}$};
\node (z2n+2) [right of = z2n+1] {$\cdots$};

\node (vn) at (0.6,4) {$z^1_{n}$};
\node (vn+1) [right of = vn] {$z^1_{n+1}$};
\node (vn+2) [right of = vn+1] {$\cdots$};
\node (v2n-1) [right of = vn+2] {$z^1_{2n-1}$};
\node (v2n) [right of = v2n-1] {$z^1_{2n}$};
\node (v2n+1) [right of = v2n] {$z^1_{2n+1}$};
\node (v2n+2) [right of = v2n+1] {$\cdots$};

\node (un) [fill=lightgray,rounded corners] at (0.6,5) {$z^2_{n}$};
\node (un+1) [right of = un,fill=lightgray,rounded corners] {$z^2_{n+1}$};
\node (un+2) [right of = un+1] {$\cdots$};
\node (u2n-1) [right of = un+2,fill=lightgray,rounded corners] {$z^2_{2n-1}$};
\node (u2n) [right of = u2n-1,fill=lightgray,rounded corners] {$z^2_{2n}$};
\node (u2n+1) [right of = u2n,fill=lightgray,rounded corners] {$z^2_{2n+1}$};
\node (u2n+2) [right of = u2n+1] {$\cdots$};

\node (t2n) at (5.4,6) {$z^3_{2n}$};
\node (t2n+1) [right of = t2n] {$z^3_{2n+1}$};
\node (t2n+2) [right of = t2n+1] {$\cdots$};

\node (s2n) [fill=lightgray,rounded corners] at (5.4,7) {$z^4_{2n}$};
\node (s2n+1) [right of = s2n,fill=lightgray,rounded corners] {$z^4_{2n+1}$};
\node (s2n+2) [right of = s2n+1] {$\cdots$};

\draw [->,thick] (bot) -- (w0);
\draw [->,thick] (bot) -- (w1);
\draw [->,thick] (bot) -- (w2);
\draw [->,thick] (bot) -- (wn-1);
\draw [->,thick] (bot) -- (wn);
\draw [->,thick] (bot) -- (wn+1);
\draw [->,thick] (bot) -- (w2n-1);
\draw [->,thick] (bot) -- (w2n);
\draw [->,thick] (bot) -- (w2n+1);

\draw [->,thick] (w0) -- (a0);
\draw [->,thick] (w1) -- (a1);
\draw [->,thick] (w2) -- (a2);
\draw [->,thick] (wn-1) -- (an-1);
\draw [->,thick] (wn) -- (an);
\draw [->,thick] (wn+1) -- (an+1);
\draw [->,thick] (w2n-1) -- (a2n-1);
\draw [->,thick] (w2n) -- (a2n);
\draw [->,thick] (w2n+1) -- (a2n+1);

\draw [->,thick] (a0) -- (b0);
\draw [->,thick] (a1) -- (b1);
\draw [->,thick] (a2) -- (b2);
\draw [->,thick] (an-1) -- (bn-1);
\draw [->,thick] (an) -- (bn);
\draw [->,thick] (an+1) -- (bn+1);
\draw [->,thick] (a2n-1) -- (b2n-1);
\draw [->,thick] (a2n) -- (b2n);
\draw [->,thick] (a2n+1) -- (b2n+1);

\draw [->,thick] (b0) -- (c0);
\draw [->,thick] (b1) -- (c1);
\draw [->,thick] (b2) -- (c2);
\draw [->,thick] (bn-1) -- (cn-1);
\draw [->,thick] (bn) -- (cn);
\draw [->,thick] (bn+1) -- (cn+1);
\draw [->,thick] (b2n-1) -- (c2n-1);
\draw [->,thick] (b2n) -- (c2n);
\draw [->,thick] (b2n+1) -- (c2n+1);

\draw [->,thick] (c0) -- (x0);
\draw [->,thick] (c1) -- (x1);
\draw [->,thick] (c2) -- (x2);
\draw [->,thick] (cn-1) -- (xn-1);
\draw [->,thick] (cn) -- (xn);
\draw [->,thick] (cn+1) -- (xn+1);
\draw [->,thick] (c2n-1) -- (x2n-1);
\draw [->,thick] (c2n) -- (x2n);
\draw [->,thick] (c2n+1) -- (x2n+1);

\draw [->,thick] (x0) -- (y0);
\draw [->,thick] (x1) -- (y1);
\draw [->,thick] (x2) -- (y2);
\draw [->,thick] (xn-1) -- (yn-1);
\draw [->,thick] (xn) -- (yn);
\draw [->,thick] (xn+1) -- (yn+1);
\draw [->,thick] (x2n-1) -- (y2n-1);
\draw [->,thick] (x2n) -- (y2n);
\draw [->,thick] (x2n+1) -- (y2n+1);

\draw [->,thick] (x1) -- (y0);
\draw [->,thick] (x2) -- (y1);
\draw [->,thick] (xn) -- (yn-1);
\draw [->,thick] (xn+1) -- (yn);
\draw [->,thick] (x2n) -- (y2n-1);
\draw [->,thick] (x2n+1) -- (y2n);

\draw [->,thick] (y0) -- (z0);
\draw [->,thick] (y1) -- (z1);
\draw [->,thick] (y2) -- (z2);
\draw [->,thick] (yn-1) -- (zn-1);
\draw [->,thick] (yn) -- (zn);
\draw [->,thick] (yn+1) -- (zn+1);
\draw [->,thick] (y2n-1) -- (z2n-1);
\draw [->,thick] (y2n) -- (z2n);
\draw [->,thick] (y2n+1) -- (z2n+1);

\draw [->,thick] (zn) -- (vn);
\draw [->,thick] (zn+1) -- (vn+1);
\draw [->,thick] (z2n-1) -- (v2n-1);
\draw [->,thick] (z2n) -- (v2n);
\draw [->,thick] (z2n+1) -- (v2n+1);

\draw [->,thick] (vn) -- (un);
\draw [->,thick] (vn+1) -- (un+1);
\draw [->,thick] (v2n-1) -- (u2n-1);
\draw [->,thick] (v2n) -- (u2n);
\draw [->,thick] (v2n+1) -- (u2n+1);

\draw [->,thick] (u2n) -- (t2n);
\draw [->,thick] (u2n+1) -- (t2n+1);

\draw [->,thick] (t2n) -- (s2n);
\draw [->,thick] (t2n+1) -- (s2n+1);
\end{tikzpicture}}
}
\caption{The 2-colored countable poset $\q_n\in \mathbb{P}_{\lay}$ for $n\in\w^+$.}\label{posetqn}
\end{figure}

Formally, the set of nodes is:
\begin{align*}
Q_n&=\{\bot\}\cup \{x^{k}_m,y_m\}_{m\in\omega, k<2n} \\ 
&\ \ \ \cup\big\{z^{2k}_{m}\mid k\in\omega, n\geq km\big\} \cup\big\{z^{2k+1}_{m}\mid k\in\omega, n\geq (k+1)m\big\},
\end{align*}
the order relation is:
\begin{align*}
\leq_{q_n}&=\big\{(\bot,x^0_m),(x^k_m,x^{k+1}_m),(x^{2n-1}_m,y_m),(x^{2n-1}_{m+1},y_m),(y_m,z^0_{m})\big\}_{m\in\omega,k<2n-1}  \\ 
&\ \ \ \cup \left\{(z^k_{m},z^{k+1}_{m})\mid k\leq \left\lfloor\frac{m}{n}\right\rfloor\cdot 2-1\right\},
\end{align*}
and the coloring is given by the function:
\begin{align*}
\col_{p_n}: P_n &\to 2 \\
p&\mapsto 0 \ \ \  \text{ if $p\in \{\bot, x^{2k+1}_m, y_m\}_{m\in\w,k<n} \cup \bigcup_{m\in\w} z^{\odd}_m$}, \\
p&\mapsto 1 \ \ \  \text{ if $p\in \{2^{2k}_m\}_{m\in\w,k<n} \cup \bigcup_{m\in\w} z^{\even}_m$}.
\end{align*}

As in the proof of Theorem $\ref{illfounded},$ it is easy to see that $\q_n\in \mathbb{P}_{\fin}$, and thus $\A_{\q_n}\in D_\omega(\bsigma^0_1)(\scott)$ holds for every $n\in \w^+.$ Now, it remains to show that $(\q_n)_{n\in\w_+}$ is a sequence of pairwise $\preccurlyeq_c$-incomparable embeddable posets.  For this purpose, we define a \emph{right-shift in $\q_n$} as any sequence of moves of the form $(x^{2n-2}_k,y_k,x^{2n-2}_{k+1})$ for some $k\in\w$. 

\begin{claim}
If $0<n<m<\w,$ then $\q_m\not\preccurlyeq_c  \q_n.$
\end{claim}

\begin{claimproof}
Towards a contradiction, we assume that $\q_m\preccurlyeq_c \q_n$ holds. By Proposition \ref{propcharposet}, $\II$ has an ultrapositional winning strategy $\tau$ in the game $G_{\mathbb{P}}(\q_m,\q_n)$.

The idea of the proof is to exhibit some specific run for $\I$ in this game that $\tau$ cannot beat. For this purpose, player $\I$ will use the fact that $n<m$ and several right-shifts to reach an element $q\in\q_n$ which has a larger increasing strength than $\tau(q)$.

We consider $x_0^{2m-2}$ as $\I$'s first move. If $\II$'s first move is $x_i^{2j}$ for some $i\in\omega$ and $j<n,$ then $\Str_{\decr}\big(x_0^{2m-2}\big)=2m>2n \geq \Str_{\decr}\big(x_i^{2j}\big),$ which contradicts Lemma \ref{strength}. Since $\col_{q_m}(x_0^{2m-2})=1$, we can assume that  $\tau\big(x_0^{2m-2}\big)=z^{2k}_{l_0}$ for some $k,l_0\in\omega$. 

If $\I$'s second move is $y_0,$ then $\II$'s second move has color $0$. Hence, $\II$'s second move is of the form $z^{2k'+1}_{l_0}$ for some $k'\in \omega$.

Since $\Str_{\decr}\big(x_1^{2m-2}\big)=2m>2n \geq \Str_{\decr}\big(x_i^{2j}\big)$ for all $j<n,$ if $\I$'s third move is $x_1^{2m-2}$, then Lemma \ref{strength} implies that $\II$'s third move cannot be of the form $x_{i}^{2j}$ for some $i,j\in\w$. So, $\II$'s third move is of the form $z^{2k''}_{l_0}$ for some $k''\in\w.$ 

Now, consider the run where $\I$ plays right-shifts: $$\big(x_0^{2m-2},y_0,x_1^{2m-2},y_1,x_2^{2m-2},y_2,\dots\big).$$ By the previous observations, $\II$ will only play in $z^\cdot_{l_0}.$ But there exists $i_0\in\w$ such that $$\Str_{\incr}(y_{i_0})>\max \{\Str_{\incr}(q)\mid q\in z^\cdot_{l_0}\},$$ which contradicts Lemma \ref{strength}.
\end{claimproof}

For the last two claims, we need to introduce the notion of \emph{branches in $\q_n$}. For any $k\in\w,$ we call \emph{branch $k$ of $\q_n$} the set of nodes $B_k=\{x^l_k,y_k\}_{l<2n}\cup z^\cdot_k.$ The next claim, which concerns the 2-colored countable posets of the form $\q_n$ for some $n\in\w^+$, is a tailoring of Claim \ref{claim35}.

\begin{claim}\label{claim40}
Let $n,m\in\w^+$ and $\tau$ be an ultrapositional strategy in $G_{\mathbb{P}}(\q_n,\q_m)$. If $\I$'s moves are a right-shift $(x^{2n-2}_k,y_k,x^{2n-2}_{k+1})$ and $\tau\big(x^{2n-2}_k\big)\in B_l$ holds for some $l\in\w,$ then $\tau\big(x^{2n-2}_{k+1}\big)\in B_{l'}$ holds for some $l'\leq l+1$.
\end{claim}

\begin{claimproof}
We proceed as in the proof of Claim \ref{claim35}, except that the right-shift $(w_k,y_k,w_{k+1})$ in $\p_n$ is replaced by the right-shift $(x^{2n-2}_k,y_k,x^{2n-2}_{k+1})$ in $\q_n$.
\end{claimproof}

With the help of the previous claim, we finally obtain:

\begin{claim}
If $0<n<m<\w,$ then $\q_n\not\preccurlyeq_c \q_m.$
\end{claim}

\begin{claimproof}
We proceed as in the proof of Claim \ref{claim37}. Towards a contradiction, suppose that $\q_n\preccurlyeq_c \q_m$ holds. By Proposition \ref{propcharposet}, player $\II$ has a winning ultrapositional strategy $\tau$ in the game $G_{\mathbb{P}}(\q_n,\q_m)$.

Suppose that $\I$'s first move is $x^{2n-2}_0$ so that $\tau\big(x^{2n-2}_0\big)\in B_{k_0}$ for some $k_0\in\w,$ and that $\I$ plays a run composed with several right-shifts $$\big(x^{2n-2}_0,y_0,x^{2n-2}_1,y_1,x^{2n-2}_2\dots,x^{2n-2}_l).$$ By an iteration of Claim \ref{claim40}, we get $\tau\big(x^{2n-2}_l\big)\in B_{l'}$ for some $l'\leq k_0+l$. Since $n<m,$ there exists $n_0\in \w$ such that the following inequalities work:
$$\Str_{\incr}\big(x^{2n-2}_{nmn_0}\big)=2mn_0+3>2nn_0+\Str_{\incr}\big(x^0_{k_0}\big)\geq\Str_{\incr}\Big(\tau\big(x^{2n-2}_{nmn_0}\big)\Big),$$
which contradicts Lemma \ref{strength}.
\end{claimproof}

So, we constructed an infinite sequence of pairwise $\preccurlyeq_c$-incomparable embeddable posets, namely $(\q_n)_{n\in\omega^+}.$
By Lemma \ref{lemmainjectivehomomorphism}, we obtain an infinite sequence of pairwise $\leq_w$-incomparable subsets of $\scott$, namely $\big(\A_{\q_n}\big)_{n\in\omega^+}.$
We also proved that all these sets are $\w$-differences of open sets.
\end{proof}

\section{Open questions}

We conclude with some related open questions that may serve as guidelines for future work.

In Theorem \ref{injectivehomomorphism}, we exhibited a partial order on a class of 2-colored countable posets which embeds in the Wadge order on the $\bdelta^0_2$-degrees of $\scott$. It would be desirable to find a better description of this partial order, as it was recently done in \cite{Kihara2017} for the Baire space $\w^\w$ -- the space of infinite sequence of integers endowed with the product of the discrete topology. More precisely, they showed that the Wadge order on the Borel subsets of $\w^\w$ can be represented by countable joins of countable transfinite nests of well-founded trees labeled by 2. Although such a description seems to be out of reach for the whole Borel subsets, a reasonable question would be:

\begin{question}
Is there any standard order-theoretic structure which is isomorphic to $\sfrac{\big(\bdelta^0_2(\scott),\leq_w\!\!\big)}{\equiv_w}$?
\end{question}

We showed that some unwanted properties already occur at a very low topological complexity level in the Wadge order of $\scott.$ By looking at some reductions that are more general than the continuous ones, these bad behaviors may disappear. For example, Motto Ros, Schlicht and Selivanov consider the class of $\bsigma^0_\w$-functions $\F_0=\{f:\scott\to\scott : f^{-1}(\A)\in \bsigma^0_\w(\scott) \text{ for any }\A\in \bsigma^0_\w(\scott)\}$ \cite{Mottoros2015}. They show that $\leq_{\F_0}$\footnote{We write $\A\leq_{\F_0} \B$ if there exists $f\in \F_0$ such that $f^{-1}[\B]=\A$.} induces a well-quasi-order on the Borel subsets of $\scott$. Thus, the following question seems of interest:

\begin{question}
For which classes of functions $\F\subseteq\F_0$ containing the continuous ones is the induced order $\leq_\F$ on the Borel subsets of $\scott$ a well-quasi-order?
\end{question}

Another relevant question concerns the possibility of extending our results to some other quasi-Polish spaces. We essentially focused on $\scott$ because it is universal among them. Since we showed that $\scott$ is not well-behaved with respect to the Wadge order, one may ask where the well-behaved quasi-Polish spaces may be found.

\begin{question}
Is there a natural characterization of the quasi-Polish spaces whose Wadge order on the Borel subsets is a well-quasi-order?
\end{question}

In the metrizable setting, Schlicht proved that the Polish spaces for which $\leq_w$ is a well-quasi-order on the Borel subsets are exactly the zero-dimensional ones \cite{Schlicht2018}. It would be interesting to know whether this property somehow extends to the quasi-Polish spaces.

\bibliographystyle{amsalpha}
\bibliography{biblio1}

\end{document}